\numberwithin{equation}{section}
\newtheorem{theorem}{Theorem}[section]
\newtheorem{lemma}[theorem]{Lemma}
\newtheorem{proposition}[theorem]{Proposition}
\newtheorem{corollary}[theorem]{Corollary}
\theoremstyle{definition}
\newtheorem{example}[theorem]{Example}
\theoremstyle{remark}
\newtheorem{remark}[theorem]{Remark}
\numberwithin{equation}{section}
\newcommand{\Wip}{\mathrm{A}_+^1}
\newcommand{\vanish}[1]{\relax}
\newcommand{\N}{\mathbb{N}}
\newcommand{\R}{\mathbb{R}}
\newcommand{\C}{\mathbb{C}}
\newcommand{\ud}{\mathrm{d}}
\newcommand{\eM}{\mathrm{M}}
\newcommand{\Lap}{\mathcal{L}}
\newcommand{\Sum}[2][\relax]{%
 \ifx#1\relax \sideset{}{_{#2}}\sum
 \else \sideset{}{^{#1}_{#2}}\sum
 \fi}
\DeclareMathOperator{\re}{Re}
\newcommand{\abs}[1]{\left| #1 \right|}
\renewcommand{\abs}[1]{\left\vert#1\right\vert}
\DeclareMathOperator{\Sect}{Sect}
\DeclareMathOperator{\Reg}{Reg}
\newcommand{\pfeil}{\longrightarrow}
\DeclareMathOperator{\dom}{dom}
\DeclareMathOperator{\ran}{ran}
\newcommand{\cls}[1]{\overline{#1}}
\DeclareMathOperator{\Lin}{\mathcal{L}}
\newcommand{\norm}[2][\relax]{%
   \ifx#1\relax \ensuremath{\left\Vert#2\right\Vert}
   \else \ensuremath{\left\Vert#2\right\Vert_{#1}}
   \fi}
\newcommand{\sprod}[2]{\ensuremath{%
  \setbox0=\hbox{\ensuremath{#2}}
  \dimen@\ht0
  \advance\dimen@ by \dp0
  \left(\left.#1\rule[-\dp0]{0pt}{\dimen@}\,\right|#2\hspace{1pt}\right)}}
\newcounter{aufzi}
\newcounter{aufzii}
\newcounter{aufziii}
\numberwithin{equation}{section}
\def\Reg{\operatorname{Reg}}
\begin{document}


\title[Product formulas in functional  calculi]
{Product formulas in functional calculi for sectorial operators}

\author{Charles Batty}
\address{St. John's College\\
University of Oxford\\
Oxford OX1 3JP, UK
}

\email{charles.batty@sjc.ox.ac.uk}

\author{Alexander Gomilko}
\address{Faculty of Mathematics and Computer Science\\
Nicolas Copernicus University\\
ul. Chopina 12/18\\
87-100 Toru\'n, Poland } \email{gomilko@mat.uni.torun.pl}


\author{Yuri Tomilov}
\address{Faculty of Mathematics and Computer Science\\
Nicolas Copernicus University\\
ul. Chopina 12/18\\
87-100 Torun, Poland\\
and
Institute of Mathematics\\
Polish Academy of Sciences\\
\'Sniadeckich 8\\
00-956 Warsaw, Poland
}

\email{tomilov@mat.uni.torun.pl}

\thanks{The research described in this paper was supported by the EPSRC grant EP/J010723/1.  The second and third authors were also partially supported by the NCN grant DEC-2011/03/B/ST1/00407 and by the EU Marie Curie IRSES program, project ``AOS'', No.\ 318910}

\subjclass{Primary 47A60; Secondary 33C99 47D03}

\keywords{Banach space, sectorial operator, functional 
calculus, product formula, generalised Stieltjes functions, Bernstein functions}

\date{10 September 2014}

\begin{abstract}
We study the product formula $(fg)(A) = f(A)g(A)$ in the framework of (unbounded) functional calculus of sectorial operators $A$. We give an abstract result, and, as corollaries, we obtain new product formulas for the holomorphic functional calculus, an extended Stieltjes functional calculus and an extended
Hille-Phillips functional calculus.  Our results generalise previous work of Hirsch, Martinez and Sanz, and Schilling.
\end{abstract}

\maketitle

\section{Introduction}

Sums and products of sectorial operators on a complex Banach space $X$ arise frequently in the theory of linear evolution equations, but they can be awkward to handle.  It is often unclear whether the sum $A+B$ and the product (composition) $AB$ of two unbounded operators $A$ and $B$, with their natural domains, are closed, or even closable.  It is natural to assume that $A$ and $B$ commute (in the sense of having commuting resolvents), and to exclude cases where the sum or product may fail to be closed due to cancellation, but even then there are difficulties.  For sums of two commuting sectorial operators there are now several theorems, known as Dore-Venni theorems (see, for example,  \cite[Theorem 12.13]{Weis}, \cite{LLL}), where sectoriality of the sum is established, and several of them have analogues for the product.  However these results rely on assuming that at least one of the operators has additional properties such as bounded $H^\infty$-calculus on a sector.  

In this paper, we consider this question for products in a slightly different form.  We take one sectorial operator $A$ and two functions $f$ and $g$ such that $f(A)$ and $g(A)$ can be defined by any of several different functional calculi.  We then try to make sense of the product formula
\begin{equation} \label{main}
(f  g)(A) =  f(A)g(A).\end{equation}
If  $(f  g)(A)$ is defined as a closed operator within some functional calculus and (\ref{main}) is true, then we obtain as a corollary that the product operator $f(A)g(A)$ is closed.

There have to be some supplementary restrictions for results of this type.  For any sectorial operator $A$, the product operator
$A(1+A)^{-1}$ is the bounded operator $1 - (1+A)^{-1}$ with domain $X$, while $(1+A)^{-1}A$ is the restriction of that operator to $\dom(A)$.  So \eqref{main} holds for $f(z) = z$ and $g(z) = (1+\nobreak z)^{-1}$, but not for $f(z)=(1+z)^{-1}$ and $g(z)=z$. Thus the supplementary assumptions may be asymmetrical between $f$ and $g$.

There are some results in the literature where \eqref{main} is established under various assumptions.  Hirsch introduced functional calculus for the convex cone $\mathcal{CBF}$ of complete Bernstein functions, and he proved that \eqref{main} holds if $f$, $g$, and $f g$ all belong to $\mathcal{CBF}$ \cite[Theorem 1]{HirFA}.   Martinez and Sanz \cite{Mart} (see also \cite[Chapter 4]{Mart1}) extended the functional calculus to the linear span $\mathcal T$ of $\mathcal{CBF}$.   Such a function  $f$ has a representation
\begin{equation}
f(z)=a+\int_0^\infty \frac{z \mu(ds)}{1+zs}, \qquad z\in \C\setminus (-\infty,0],
\label{sfun0}
\end{equation}
for some $a\in \C$ and a suitable complex measure $\mu$ on $[0,\infty)$. In particular, if $a\ge 0$ and $\mu$
is a positive measure, then $f$ is a complete Bernstein function
\cite[Chapter 6]{SSV}.  When $A$ is sectorial $f(A)$ is defined as the closure of the operator
\begin{equation}
 \dom(A)\ni x \,\mapsto\, \hat{f}(A)x=
ax+\int_{0}^\infty A(1+sA)^{-1}x\,\mu(ds).
\label{stilclos}
\end{equation}
Martinez and Sanz  proved the following extension of Hirsch's result.

\begin{theorem}\cite[Theorem 2.2]{Mart}.\label{prodM}
If $f$, $g$ and $f g$ are functions of the class $\mathcal{T}$ such
that $f$ has no zeros in $\C \setminus (-\infty,0]$ and
\begin{equation*}
\tilde{f}(z):=1/f(1/z)\in \mathcal{T},
\end{equation*}
then \eqref{main} holds.  In particular, \eqref{main} holds if $f \in \mathcal{CBF}$, $g \in \mathcal{T}$ and $fg \in \mathcal{T}$.
\end{theorem}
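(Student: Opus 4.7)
The plan is to verify the identity (\ref{main}) first on a dense core where $f(A)$, $g(A)$, and $(fg)(A)$ all reduce to their defining integrals, and then to bootstrap to the full statement using the injectivity of $f(A)$ granted by $\tilde f\in\mathcal T$.

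The algebraic backbone will be the partial-fraction identity
$$
\frac{z^{2}}{(1+zs)(1+zt)}=\frac{1}{t-s}\Bigl[\frac{z}{1+zs}-\frac{z}{1+zt}\Bigr],\qquad s\ne t,
$$
which collapses a product of two Cauchy kernels into a single difference. Using \eqref{sfun0} for $f$, $g$ and $fg$ with data $(a,\mu)$, $(b,\nu)$, $(c,\lambda)$ respectively, I would expand the scalar product $f(z)g(z)$, apply the identity above to the double-integral cross term, and invoke uniqueness of the Cauchy--Stieltjes transform to identify $c$ and $\lambda$ in terms of $a,b,\mu,\nu$.

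The operator analogue, immediate from the resolvent identity, reads
$$
A(1+sA)^{-1}A(1+tA)^{-1}=\frac{1}{t-s}\bigl[A(1+sA)^{-1}-A(1+tA)^{-1}\bigr].
$$
For $x\in\dom(A^{2})$ both $\hat f(A)\hat g(A)x$ and $\widehat{fg}(A)x$ make sense; substituting this identity into the iterated integral for $\hat f(A)\hat g(A)x$, applying Fubini (justified by the uniform bound $\sup_{r>0}\|A(1+rA)^{-1}\|<\infty$ together with the finiteness of $|\mu|,|\nu|$), and invoking the scalar identification of $(c,\lambda)$, one arrives at $\hat f(A)\hat g(A)x=\widehat{fg}(A)x$ on $\dom(A^{2})$. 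Since $\dom(A^{2})$ is a core for $\widehat{fg}(A)|_{\dom(A)}$ via the standard approximation $x_{n}=(n(n+A)^{-1})^{2}x$, closing yields the inclusion $(fg)(A)\subseteq\overline{f(A)g(A)}$.

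The reverse inclusion $f(A)g(A)\subseteq(fg)(A)$ is the main obstacle, and is where the hypothesis on $f$ becomes decisive. The assumptions that $\tilde f\in\mathcal T$ and $f$ has no zeros on $\C\setminus(-\infty,0]$ force $f(A)$ to be injective with inverse representable, in an extended functional-calculus sense, via $\tilde f$ applied to $A^{-1}$. My strategy is to regularise through $A_{\epsilon}:=A+\epsilon$, for which $f(A_{\epsilon})$ is boundedly invertible with the bound extracted from $\tilde f\in\mathcal T$, apply the previous step to $A_{\epsilon}$, and then let $\epsilon\downarrow 0$. The genuine difficulty lies precisely in this passage to the limit: one must produce approximants $x_{\epsilon}\in\dom(A^{2})$ with $x_{\epsilon}\to x$ and $\widehat{fg}(A)x_{\epsilon}\to f(A)g(A)x$, and controlling this convergence is exactly the role played by the Stieltjes representation of $\tilde f$, which quantifies the behaviour of $1/f$ near $0$ and $\infty$ in a way compatible with the sectorial calculus of $A^{-1}$.
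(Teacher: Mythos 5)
Your first stage (showing $\overline{f(A)g(A)}=(fg)(A)$ via the partial-fraction/resolvent identity on $\dom(A^2)$ and a core argument) is sound and matches what the paper obtains abstractly in Proposition \ref{GenerC}(b). But the theorem asserts equality of unbounded operators, not equality after closure, and by \eqref{RKrit} everything reduces to the single domain inclusion $\dom((fg)(A))\subset\dom(g(A))$. This is exactly the step your proposal leaves open: you identify the passage to the limit $\epsilon\downarrow 0$ for $A_\epsilon=A+\epsilon$ as ``the genuine difficulty'' and describe what would have to be controlled, but you do not control it. Knowing $f(A_\epsilon)g(A_\epsilon)=(fg)(A_\epsilon)$ for every $\epsilon>0$ does not transfer to $\epsilon=0$, because domains of unbounded operators are not stable under such limits: for $x\in\dom((fg)(A))$ you would need $g(A_\epsilon)x$ to converge to something identifiable as $g(A)x$ with $x\in\dom(g(A))$, and the available convergence results (such as Proposition \ref{ttn}) hold only on cores like $\dom(A^n)$, not on the full domain. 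The paper explicitly warns that ``passing to $A+\delta$ does not seem helpful in this context'' for precisely this reason; it reserves that device for identities between \emph{bounded} operators (Propositions \ref{ttn01}, \ref{homAlg}) and for consistency of definitions on a common core (Corollary \ref{coins}).

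The missing idea is the algebraic range argument of Theorem \ref{MjA}. The hypothesis $\tilde f\in\mathcal{T}$ is used not to invert $f(A_\epsilon)$ quantitatively, but to produce a \emph{bounded} operator $Q=(e_3/f)(A)$ with $e_3(z)=z^r(1+z)^{-r}$, where $1/f\in\tilde{\mathcal{S}}_r$. With $F=(f\psi_k)(A)$, $G=(g\psi_m)(A)$ and $x\in\dom((fg)(A))$ one computes $QFGx=e_3(A)(1+A)^{-k}Gx\in\ran((1+A)^{-(k+m)})$, and then the polynomial identity $\alpha(1-z)^{k+m}+z\tilde p(z)=1$ applied to $e_3(A)$, together with $\ran((1-e_3(A))^{k+m})\subset\ran((1+A)^{-(k+m)})$, upgrades this to $(1+A)^{-k}Gx\in\ran((1+A)^{-(k+m)})$; cancelling the injective factor $(1+A)^{-k}$ gives $g(A)(1+A)^{-m}x\in\ran((1+A)^{-m})$, i.e.\ $x\in\dom(g(A))$. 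No limit in $\epsilon$ is taken. Finally, for non-injective $A$ the paper does not regularise at all but passes to the quotient $X_0=X/\ker A$, on which the induced operator $A_0$ is injective and sectorial, and pulls the domain inclusion back using $\ker A\subset\dom(g(A))$ (Lemma \ref{quotient}, Proposition \ref{quotfc}). Your reliance on $A^{-1}$ and on $f(A)$ being ``injective with inverse representable via $\tilde f$ applied to $A^{-1}$'' silently assumes injectivity of $A$, which the theorem does not.
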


Schilling proved another result of similar type in \cite{Sch} (see \cite[Theorem 12.22]{SSV}).  If $-A$ generates a bounded $C_0$-semigroup, then $f(A)$ can be defined for every Bernstein function $f$, and then \eqref{main} holds if $f$, $g$ and $f g$ are all Bernstein functions. 
  
In the results above, none of the classes of functions is  closed under products, so the function $f  g$ has to be assumed to be in the same class in each case.  In this article, we generalise Theorem \ref{prodM} to cases when $f g\not\in \mathcal{T}$, with a much simpler proof than those given in \cite{HirFA} and \cite{Mart}.  To  extend the product formula (\ref{main}) to this case we need a proper definition of the operator $(f g)(A)$, and we use an extension procedure for an elementary functional calculus as described in 
\cite{Ha06}.  This is presented in Section \ref{ABFprel} in a very abstract context, and in later sections for several different functional calculi for sectorial operators.  In doing so, we extend these functional calculi to algebras (so that we no longer have to assume that $f  g$ is in the appropriate class of functions), and we also need to establish that the various calculi are consistent with each other.   Such properties are also needed for applications to rates of decay of operator semigroups as studied in \cite{BCT}.

The product formula is shown to be true, under mild restrictions, in the context of abstract functional calculus (Theorem \ref{MjA}), holomorphic functional calculus (Theorem \ref{Mj}, Corollary \ref{cor1}), and extended Stieltjes calculus (Theorem \ref{cor3}) for sectorial operators, and also the extended Hille-Phillips calculus (Theorem \ref{generator}) for negative generators of bounded $C_0$-semigroups.

\subsection*{Preliminaries}\label{notations}
Throughout, $X$ will be a complex Banach space, $\Lin(X)$ will denote the space of bounded linear operators on $X$, and the identity operator on $X$ will be denoted by $1$.

For a linear operator $A$ on $X$ we denote by $\dom(A)$, $\ran(A)$, $\ker(A)$, and $\sigma(A)$ the
{\em domain}, the {\em range}, the {\em kernel}, and the {\em
spectrum} of $A$, respectively. If $A$ is closable, we shall denote the closure of $A$ by $\overline{A}$. 
If $A$ is injective, we consider the operator $A^{-1}$ with $\dom(A^{-1}) = \ran(A)$.  For operators $A$ and $B$ on $X$, we take the sum $A+B$ and product (composition) $AB$ to have domains
\begin{eqnarray*}
\dom(A+B) &=& \dom(A) \cap \dom(B), \\
\dom(AB) &=& \{x \in \dom(B): Bx \in\dom(A)\}.
\end{eqnarray*}

An operator $A$ is  {\em sectorial} or {\em non-negative} if $A$ is densely defined, $\sigma(A) \subset \C \setminus (-\infty,0)$ and 
\begin{equation}
M(A):=\sup_{s>0}\,\|(1+sA)^{-1}\|<\infty.
\label{stiloper}
\end{equation}
We note that in some places in the literature sectorial operators are not required to be densely defined, or are required to be injective.   A few of our results include an assumption that $A$ is injective.

The following properties of a sectorial operator $A$ are easily seen and well known \cite{Ha06}, \cite{Weis}, \cite{Mart1}:
\begin{enumerate}[\rm (a)]
\item  There exists $\omega \in (0,\pi)$ such that $\sigma(A)\subset \overline{S}_\omega$ and
\[
M(A,\omega):=
\sup\{\|z (z-A)^{-1}\|: z\not\in \overline{S}_{\omega}\}<\infty,
\]
where
\[
S_\omega:=\{z\in \C:\,z\not=0,\;|\arg z|<\omega\}
\]
denotes the open sector which is symmetric about the positive real axis with half-angle $\omega$.  Then we write $A \in \operatorname{Sect}(\omega)$.
\item  For any $\delta>0$, $A + \delta$ is sectorial and invertible.  Moreover, for any $s>0$,
\begin{equation}\label{sA02}
\|(1+\delta s+sA)^{-1}\|=(1+\delta s)^{-1}\|(1+(1+\delta s)^{-1} sA)^{-1}\|\le \frac{M(A)}{1+\delta s} \,.
\end{equation}

\item  For any $s>0$,
\begin{equation}
\|sA(1+sA)^{-1}\|=\|1-(1+sA)^{-1}\|\leq 1+M(A).
\label{sA0}
\end{equation}

\item  Let $n\in\N$.  In the strong operator topology,
\begin{equation} \label{sot}
\lim_{s\to0+} (1+sA)^{-n} = 1.
\end{equation}
\end{enumerate}

\section{Extended abstract functional calculus}
\subsection{Abstract functional calculus}\label{ABFprel}
Here we recall the abstract, purely algebraic, notion of functional calculus described in \cite{deLau95}, \cite{Ha05} and \cite[Chapter 1]{Ha06}.  One starts with a triple
$(\mathcal{E},\mathcal{F},\Phi)$, where
$\mathcal{F}$ is a  complex, commutative algebra with unit $\bf{i}$,
$\mathcal{E}$ is a subalgebra of $\mathcal{F}$ and
$\Phi: \mathcal{E}\mapsto \Lin(X)$ is
an algebra homomorphism.
This situation is called an {\em abstract functional calculus} over $X$.

For $f\in \mathcal{F}$ each member of the set
\[
\Reg(f):=\{e\in \mathcal{E}: \text{$e f\in \mathcal{E}$, $\Phi(e)$ injective}\}
\]
is called a {\em regulariser} for $f$.
If $\Reg(\bf{i})\not=\emptyset$, then the abstract functional calculus is called {\em proper}.
For a proper abstract functional calculus
one can extend $\Phi$ to the subalgebra
\[
\mathcal{F}_r:=\{f\in \mathcal{F}:\, \mbox{Reg}(f)\not=\emptyset\},
\]
which necessarily contains $\mathcal{E}$ and ${\bf i}$, by setting
\[
\Phi(f):=\Phi(e)^{-1}\Phi(ef),
\]
where $e\in \Reg(f)$ is arbitrary.
This does not depend on the choice of $e$
and yields a closed (in general unbounded) operator $\Phi(f)$, which coincides with the original $\Phi(f)$
when $f$ itself is in $\mathcal{E}$.
We define
\[
\mathcal{F}_b=\{f\in \mathcal{F}_r:\,\Phi(f)\in \mathcal{L}(X)\},
\]
the set of all those regularisable functions which give rise to bounded operators.  Note that ${\bf i} \in \mathcal{F}_b$ and $\Phi({\bf i}) = 1$.

In many examples of proper abstract functional calculi, including those in later sections of this paper, $\mathcal{F}$ is an algebra of functions on a set $\Omega\subset\C$ and the identity  function $\iota :z\mapsto z$ is regularisable.  Then we can form the operator $A:=\Phi(\iota)$.  In this case  we call the abstract functional calculus a {\em functional calculus for} $A$, and we may write $f(A)$ instead of $\Phi(f)$.

The following proposition summarises some fundamental properties of any proper
abstract functional calculus $(\mathcal{E}, \mathcal{F},\Phi)$ over a Banach space $X$
(see \cite{deLau95}, \cite{Ha05}, \cite[Chapter 1]{Ha06}).

\begin{proposition}\label{properF}
Let $(\mathcal{E}, \mathcal{F},\Phi)$ be a proper abstract functional calculus over a Banach space $X$, and let $f,g \in \mathcal F_r$.  Then
\begin{enumerate}[\rm(a)]
\item \label{sum} $\Phi({\bf i})=1$ and
\[
\Phi(f)+\Phi(g)\subset \Phi(f+g).
\]
\item If $g \in \mathcal F_b$, then
\[
\Phi(f)+\Phi(g)= \Phi(f+g).
\]
\item \label{Baa} $\Phi(f)\Phi(g)\subset \Phi(f g)$ and
\[
\dom(\Phi(f)\Phi(g))=\dom(\Phi(fg))\cap \dom(\Phi(g)).
\]
\item \label{grrr} If $g\in \mathcal{F}_b$, then
\[
\Phi(g)\Phi(f)\subset \Phi(f)\Phi(g)= \Phi(f g).
\]
\item \label{errr} If $e\in \Reg(f)$, then $\ran(\Phi(e))\subset \dom(\Phi(f))$.
\item \label{inverrr} If $fg={\bf i}$, then $\Phi(f)$ is injective and
\[
[\Phi(f)]^{-1}=\Phi(g).
\]
\item 
If $g\in \mathcal{F}_b$ and $\Phi(g)$ is injective, then
\[
\Phi(f)=[\Phi(g)]^{-1}\Phi(f) \Phi(g).
\]
\end{enumerate}
\end{proposition}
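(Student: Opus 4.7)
The plan is to fix a common regulariser and chase definitions throughout. Given $f,g \in \mathcal{F}_r$, I choose $e \in \Reg(f)$ and $e' \in \Reg(g)$ and set $h := ee' \in \mathcal{E}$. Since $\mathcal{E}$ is commutative and $\Phi$ is an algebra homomorphism on $\mathcal{E}$, the injective operators $\Phi(e)$ and $\Phi(e')$ commute, so $\Phi(h) = \Phi(e)\Phi(e')$ is injective; moreover $h$ regularises $f$, $g$, $f+g$ and $fg$ simultaneously, since e.g.\ $h \cdot fg = (ef)(e'g) \in \mathcal{E}$. Independence of the extended calculus from the choice of regulariser then lets me evaluate $\Phi(f)$, $\Phi(g)$, $\Phi(f+g)$ and $\Phi(fg)$ via this single $h$. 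Also, $\Phi(\mathbf{i}) = \Phi(e_0)^{-1}\Phi(e_0) = 1$ for any $e_0 \in \Reg(\mathbf{i})$, such a regulariser existing by properness.

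I first dispatch the easy items (a) and (e). For (a), on $x \in \dom(\Phi(f)) \cap \dom(\Phi(g))$ both $\Phi(hf)x$ and $\Phi(hg)x$ lie in $\ran(\Phi(h))$, so $\Phi(h(f+g))x = \Phi(hf)x + \Phi(hg)x$ does as well, and applying $\Phi(h)^{-1}$ yields $\Phi(f+g)x = \Phi(f)x + \Phi(g)x$. For (e), if $x = \Phi(e)y$ then $\Phi(ef)x = \Phi(ef)\Phi(e)y = \Phi(e)\Phi(ef)y \in \ran(\Phi(e))$, placing $x$ in $\dom(\Phi(f))$.

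Next comes the multiplicative core (c), from which (b) and (d) follow cheaply. The central identity is $\Phi(hf)\Phi(hg) = \Phi(h^2fg)$ in $\Lin(X)$, valid because $hf, hg \in \mathcal{E}$. Combined with injectivity of $\Phi(h)$ (hence of $\Phi(h)^2$), this converts the defining condition $\Phi(hf)\Phi(g)x \in \ran(\Phi(h))$ for $\Phi(g)x \in \dom(\Phi(f))$ into the defining condition $\Phi(hfg)x \in \ran(\Phi(h))$ for $x \in \dom(\Phi(fg))$, and vice versa, and shows that $\Phi(f)\Phi(g)x = \Phi(fg)x$ wherever both sides are defined. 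Part (d) follows at once: for $g \in \mathcal{F}_b$, intersecting with $\dom(\Phi(g)) = X$ is trivial, giving equality $\Phi(f)\Phi(g) = \Phi(fg)$, and swapping $f \leftrightarrow g$ in (c) yields $\Phi(g)\Phi(f) \subset \Phi(gf) = \Phi(fg)$. Part (b) then holds because, by (d), $\Phi(hg)x = \Phi(h)\Phi(g)x \in \ran(\Phi(h))$ automatically, so $\Phi(hf)x = \Phi(h(f+g))x - \Phi(hg)x \in \ran(\Phi(h))$ whenever $x \in \dom(\Phi(f+g))$, forcing $x \in \dom(\Phi(f))$.

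Finally I turn to (f) and (g), coupling injectivity with (c) and (d). For (f), $fg = \mathbf{i}$ together with $\dom(\Phi(\mathbf{i})) = X$ and (c) give $\dom(\Phi(g)\Phi(f)) = \dom(\Phi(f))$ and $\Phi(g)\Phi(f) \subset 1$, whence $\Phi(f)$ is injective and $\Phi(g)$ agrees with $\Phi(f)^{-1}$ on $\ran(\Phi(f))$; the symmetric computation shows $\dom(\Phi(g)) \subset \ran(\Phi(f))$, giving $\Phi(f)^{-1} = \Phi(g)$. The main obstacle is the nontrivial inclusion $\Phi(g)^{-1}\Phi(f)\Phi(g) \subset \Phi(f)$ in (g). Starting with $x$ such that $\Phi(fg)x \in \ran(\Phi(g))$, say $\Phi(fg)x = \Phi(g)y$, I pick $e \in \Reg(f)$ and use (d) to commute $\Phi(g)$ past the bounded operators $\Phi(e)$ and $\Phi(ef)$, obtaining
\[
\Phi(g)\Phi(ef)x = \Phi(efg)x = \Phi(e)\Phi(fg)x = \Phi(e)\Phi(g)y = \Phi(g)\Phi(e)y.
\]
Injectivity of $\Phi(g)$ then forces $\Phi(ef)x = \Phi(e)y \in \ran(\Phi(e))$, placing $x \in \dom(\Phi(f))$ with $\Phi(f)x = y = \Phi(g)^{-1}\Phi(f)\Phi(g)x$. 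The reverse inclusion $\Phi(f) \subset \Phi(g)^{-1}\Phi(f)\Phi(g)$ comes immediately from (d), which makes $\Phi(g)\Phi(f)x = \Phi(f)\Phi(g)x$ on $\dom(\Phi(f))$.
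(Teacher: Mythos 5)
Your proof is correct: the device of passing to the common regulariser $h=ee'$, verifying that it simultaneously regularises $f$, $g$, $f+g$ and $fg$, and then translating every domain condition into a statement about $\ran(\Phi(h))$ via the homomorphism property on $\mathcal{E}$ and injectivity of $\Phi(h)$ is exactly the standard argument. The paper itself offers no proof of this proposition, citing deLaubenfels and Haase, and your argument matches the one given in those references.
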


\begin{remark}\label{RemPr}
It follows from (\ref{Baa}) that
\begin{equation}\label{RKrit}
\Phi(f)\Phi(g)=\Phi(f g)\;\Leftrightarrow
\;\dom(\Phi(fg))\subset \dom(\Phi(g)).
\end{equation}
\end{remark}

The next proposition is a slight generalisation of
\cite[Proposition 3.3(v)]{Ha05} and \cite[Proposition 3.2]{Clark}.  It establishes a weak form of the product formula under mild assumptions.

\begin{proposition}\label{GenerC}
Let $(\mathcal{E}, \mathcal{F},\Phi)$ be a proper abstract functional calculus over a Banach space $X$.
\begin{enumerate}[\rm (a)]
\item
Let $f \in \mathcal F_r$, and $\tilde X$ be a subspace of $\dom(\Phi(f))$.  Assume that there exists a sequence
$(e_n)_{n\ge 1}\in \mathcal{F}_b$ such that $\Phi(e_n)\to 1$ strongly as $n\to\infty$, and
\[
\ran(\Phi(e_n))\subset \tilde{X},\quad n\in \N.
\]
Then $\tilde{X}$ is a core for $\Phi(f)$.

\item Let $f,g\in \mathcal{F}_r$, and assume that there exist sequences
$(e_n)_{n\ge1}$ in $\Reg(f)$, and $(\tilde{e}_n)_{n\ge1}$ in $\Reg(g)$, such that
\[
e_nf\in \mathcal{F}_b,\quad
\tilde{e}_ng\in \mathcal{F}_b,\quad n\in \N,
\]
and
\[
\Phi(e_n)\Phi(\tilde{e}_n)=\Phi(e_n \tilde{e}_n)\to 1\quad \mbox{strongly as}\quad n\to\infty.
\]
Then
\[
\overline{\Phi(f)\Phi(g)}=\Phi(fg).
\]
\end{enumerate}
\end{proposition}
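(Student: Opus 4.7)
My plan is to prove both parts via a single strategy: construct explicit approximants out of the regularisers $e_n$ (and $\tilde e_n$) and use Proposition \ref{properF}(d) to commute the bounded operators $\Phi(e_n)$, $\Phi(\tilde e_n)$ past the generally unbounded $\Phi(f)$, $\Phi(g)$, $\Phi(fg)$.

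For (a), given $x\in\dom(\Phi(f))$ I would set $x_n:=\Phi(e_n)x$. Then $x_n\in\tilde X$ by hypothesis, and $x_n\to x$ because $\Phi(e_n)\to 1$ strongly. Since $e_n\in\mathcal F_b$, Proposition \ref{properF}(d) gives the inclusion $\Phi(e_n)\Phi(f)\subset\Phi(f)\Phi(e_n)$, so $\Phi(f)x_n=\Phi(e_n)\Phi(f)x\to\Phi(f)x$, showing that $\tilde X$ is a core for $\Phi(f)$.

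For (b), the inclusion $\overline{\Phi(f)\Phi(g)}\subset\Phi(fg)$ is immediate from Proposition \ref{properF}(c) together with the closedness of $\Phi(fg)$. For the reverse inclusion, I would fix $x\in\dom(\Phi(fg))$ and take $x_n:=\Phi(e_n\tilde e_n)x=\Phi(e_n)\Phi(\tilde e_n)x$. Two claims must be verified: first, that $x_n\in\dom(\Phi(f)\Phi(g))$, and second, that $\Phi(f)\Phi(g)x_n\to\Phi(fg)x$. The first follows by combining Proposition \ref{properF}(e), which gives $\ran(\Phi(\tilde e_n))\subset\dom(\Phi(g))$ and $\ran(\Phi(e_n))\subset\dom(\Phi(f))$, with the commutation relation of part (d); this yields
\[
\Phi(g)x_n=\Phi(e_n)\Phi(\tilde e_n g)x\in\ran(\Phi(e_n))\subset\dom(\Phi(f)).
\]

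For the convergence, repeated application of Proposition \ref{properF}(d), legitimate because each of $e_n f$, $\tilde e_n g$ and $e_n\tilde e_n$ lies in $\mathcal F_b$, should produce
\[
\Phi(f)\Phi(g)x_n=\Phi(e_n f)\Phi(\tilde e_n g)x=\Phi(e_n\tilde e_n\,fg)x=\Phi(e_n\tilde e_n)\Phi(fg)x,
\]
and the last expression tends to $\Phi(fg)x$ by the standing hypothesis. Hence $(x,\Phi(fg)x)\in\overline{\Phi(f)\Phi(g)}$, completing the plan. I expect the only real obstacle to be careful bookkeeping: each use of Proposition \ref{properF}(d) requires checking that the element to which the operators are applied lies in the appropriate domain, so that an algebraic identity between symbols in $\mathcal F$ can be promoted to an operator identity in $\Lin(X)$ on the relevant vector.
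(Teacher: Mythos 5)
Your proposal is correct and follows essentially the same route as the paper: part (a) is the identical approximation argument via $\Phi(e_n)x$, and in part (b) you establish, exactly as the paper does, that $\Phi(e_n\tilde e_n)x\in\dom(\Phi(f)\Phi(g))$ by commuting the bounded regularisers through $\Phi(f)$ and $\Phi(g)$ using Proposition \ref{properF}(d),(e). The only cosmetic difference is that the paper then invokes part (a) with $\tilde X=\dom(\Phi(f)\Phi(g))$, whereas you unfold that application and verify the convergence $\Phi(f)\Phi(g)x_n\to\Phi(fg)x$ directly.
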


\begin{proof}
(a)  Let $x\in \dom(\Phi(f))$. Then by Proposition \ref{properF}(\ref{grrr})
we have
\[
\Phi(e_n)\Phi(f)x=\Phi(f)\Phi(e_n)x.
\]
Then
\[
y_n:=\Phi(e_n)x\in \ran(\Phi(e_n))\subset\tilde{X}\]
and
\[
y_n\to x,\quad
\Phi(f)y_n \to \Phi(f)x \quad \mbox{as}\quad n \to\infty.
\]
So, $\tilde{X}$ is a core for $f(A)$.

(b) By Proposition \ref{properF}(\ref{Baa}), $\dom(\Phi(f)\Phi(g)) \subset \dom(\Phi(fg))$.  
By Proposition \ref{properF}(\ref{errr}) we have
\[
\ran(\Phi(e_n\tilde{e}_n))=\ran(\Phi(e_n)\Phi(\tilde{e}_n))\subset \ran(\Phi(e_n))\subset \dom(\Phi(f)),
\]
and by symmetry we also have
\[
\ran(\Phi(e_n\tilde{e}_n))\subset \dom(\Phi(g)).
\]
Moreover, by Proposition \ref{properF}(\ref{grrr}) we have
\[
\Phi(g)\Phi(e_n\tilde{e}_n)=\Phi(g)\Phi(\tilde{e}_n)\Phi(e_n)=\Phi(g\tilde{e}_n)\Phi(e_n)
=\Phi(e_n)\Phi(g\tilde{e}_n),
\]
so
\[
\ran(\Phi(g)\Phi(e_n\tilde{e}_n))\subset \ran(\Phi(e_n))\subset \dom(\Phi(f)),
\]
and hence
\[
\ran(\Phi(e_n\tilde{e}_n))\subset\dom(\Phi(f)\Phi(g))\subset \dom(\Phi(fg)).
\]
Now (a) implies that $\dom(\Phi(f)\Phi(g))$ is a core for $\Phi(fg)$.
\end{proof}

The next proposition is an abstract result concerning two functional calculi which are related by a quotient structure.  It will enable us to apply a quotient construction to reduce some of the proofs of results for sectorial operators $A$ in later sections to the case when $A$ is injective.  Injectivity is often assumed when studying sectorial operators but this constraint is sometimes unnatural.

\begin{proposition}\label{quotfc}
Let $(\mathcal{E},\mathcal{F},\Phi)$ and $(\mathcal{E},\mathcal{F},\Phi_0)$ be proper abstract functional calculi on Banach spaces $X$ and $X_0$, respectively, and let $u: X \to X_0$ be a bounded linear surjection such that
\[
\Phi_0(g) u = u \Phi(g), \qquad g \in \mathcal{E}.
\]
Let $f \in \mathcal{F}$, and assume that $e\in\mathcal{E}$ is a regulariser for $f$ with respect to $\Phi$, and that $\Phi(e)$ maps $\ker u$ onto $\ker u$.  Then $e$ is a regulariser for $f$ with respect to $\Phi_0$, and
\[
\Phi_0(f)u = u \Phi(f), \qquad \dom(\Phi_0(f)) = \{u(x): x \in \dom(\Phi(f))\}.
\]
\end{proposition}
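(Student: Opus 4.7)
My plan is to verify three things in turn: that $e$ is a regulariser for $f$ relative to $\Phi_0$, that $\{u(x) : x\in\dom(\Phi(f))\}\subseteq \dom(\Phi_0(f))$ with the intertwining $\Phi_0(f)u(x)=u(\Phi(f)x)$ holding on this subspace, and finally the reverse inclusion for the domain. Throughout I shall use the intertwining hypothesis $\Phi_0(g)u=u\Phi(g)$ only for $g\in\{e,ef\}$, both of which lie in $\mathcal{E}$.

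For the first two items the work is mostly bookkeeping. Injectivity of $\Phi_0(e)$ goes as follows: if $\Phi_0(e)y_0=0$, lift $y_0=u(y)$ using surjectivity of $u$; the intertwining gives $u(\Phi(e)y)=\Phi_0(e)u(y)=0$, so $\Phi(e)y\in\ker u$; by the hypothesis that $\Phi(e)$ maps $\ker u$ onto $\ker u$ there is $z\in\ker u$ with $\Phi(e)z=\Phi(e)y$, and injectivity of $\Phi(e)$ on $X$ (since $e\in\Reg(f)$ with respect to $\Phi$) forces $y=z\in\ker u$, so $y_0=0$. With $\Phi_0(e)$ injective and $ef\in\mathcal{E}$ given, $e$ regularises $f$ with respect to $\Phi_0$, and $\Phi_0(f)=\Phi_0(e)^{-1}\Phi_0(ef)$. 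For the forward inclusion, given $x\in\dom(\Phi(f))$ the defining identity $\Phi(ef)x=\Phi(e)\Phi(f)x$ followed by applying $u$ and the intertwining yields $\Phi_0(ef)u(x)=\Phi_0(e)u(\Phi(f)x)$, so $u(x)\in\dom(\Phi_0(f))$ and $\Phi_0(f)u(x)=u(\Phi(f)x)$.

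The only substantive step, and the one I expect to be the main obstacle, is the reverse inclusion. Given $y_0\in\dom(\Phi_0(f))$, I choose any lift $y\in X$ with $u(y)=y_0$; then $\Phi_0(ef)y_0=\Phi_0(e)z_0$ for some $z_0\in X_0$, which I write as $z_0=u(z)$ by surjectivity of $u$. Applying the intertwining to both sides gives $u(\Phi(ef)y-\Phi(e)z)=0$, so $\Phi(ef)y-\Phi(e)z\in\ker u$. The kernel-surjectivity hypothesis on $\Phi(e)$ then produces $w\in\ker u$ with $\Phi(e)w=\Phi(ef)y-\Phi(e)z$, so that $\Phi(ef)y=\Phi(e)(z+w)\in\ran(\Phi(e))$, meaning $y\in\dom(\Phi(f))$. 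The preliminary lift $y$ itself need not be adjusted: it already satisfies $u(y)=y_0$, and the correction by $w$ is absorbed on the $z$ side.

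The conceptual point worth flagging is that this second use of the onto-hypothesis is precisely what converts a congruence modulo $\ker u$ into a genuine factorisation through $\Phi(e)$; without it one would only obtain the inclusion $\supseteq$ for the domains, and the identity $\Phi_0(f)u=u\Phi(f)$ would assert less than claimed. Everything else reduces to the standard extension procedure for a proper abstract functional calculus combined with the surjectivity of $u$.
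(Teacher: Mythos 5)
Your proof is correct and follows essentially the same route as the paper: the injectivity argument for $\Phi_0(e)$ is identical, and your two-directional domain argument uses the onto-hypothesis on $\ker u$ in exactly the same way the paper does inside its single chain of equivalences (to convert the congruence $\Phi(ef)x\equiv\Phi(e)z \pmod{\ker u}$ into a genuine factorisation through $\Phi(e)$). Since your reverse inclusion works for an arbitrary lift $y$ of $y_0$, it also yields the full operator identity $\Phi_0(f)u=u\Phi(f)$ with equal domains, so nothing is missing.
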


\begin{proof}
Assume that $\Phi_0(e)u(x) = 0$ for some $x \in X$.  Then $u\Phi(e)x=0$, so $\Phi(e)x \in \ker u$.  By assumption,  $\Phi(e)x = \Phi(e)y$ for some $y \in \ker u$.  Since $\Phi(e)$ is injective, $x=y$, and so $u(x)=0$.  Thus $\Phi_0(e)$ is injective, and hence $e$ is a regulariser for $f$ with respect to $\Phi_0$.

Now consider $u(x), u(y)$ for arbitrary $x,y \in X$.   Then
\begin{align*}
\lefteqn{u(x) \in \dom(\Phi_0(f)), \Phi_0(f)u(x)=u(y)} \\
&\iff \Phi_0(ef)u(x) = \Phi_0(e)u(y) \\
&\iff u\Phi(ef)x = u\Phi(e)y \\
&\iff \text{$\Phi(ef)x = \Phi(e)y + z$ \quad for some $z \in \ker u$} \\
&\iff \text{$\Phi(ef)x = \Phi(e)(y+z')$ \quad for some $z' \in \ker u$} \\
&\iff \text{$x\in \dom(\Phi(f)), \Phi(f)x = y+z'$ \quad for some $z' \in \ker u$} \\
&\iff x \in \dom(\Phi(f)), u\Phi(f)x = u(y).
\qedhere
\end{align*}
\end{proof}

\subsection{Product formula in abstract functional calculus}
Let $(\mathcal{E},\mathcal{F},\Phi)$ be a proper abstract functional calculus,
 with subalgebras $\mathcal F_b \subset \mathcal{F}_r\subset \mathcal{F}$ as in Section \ref{ABFprel}.

\begin{theorem}\label{MjA} Let  $f,g\in \mathcal{F}_r$, with $f$ having an inverse $f^{-1}$ in $\mathcal F_r$.  Assume that there exist elements $e_1,e_2,e_3 \in \mathcal{F}_b$, with inverses in $\mathcal F_r$, such that
\[
e_1  f\in \mathcal{F}_b,\quad e_2 g\in \mathcal{F}_b, \quad e_3 f^{-1} \in \mathcal F_b.
\]
Assume also that there is a complex polynomial $p(z)$ with $p(0)\ne0$ such that
\begin{equation}
\ran (\Phi(p(e_3))) \subset \ran(\Phi(e_1 e_2)).
\label{nnA}
\end{equation}
Then the product formula
\begin{equation}\label{mainA}
\Phi(f)\Phi(g)=\Phi(f g)
\end{equation}
 holds.
\end{theorem}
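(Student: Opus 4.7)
The plan is to reduce the theorem to the single inclusion $\dom(\Phi(fg))\subset\dom(\Phi(g))$ and then to establish it by an algebraic manipulation coupling the regularisers $e_1,e_2$ with the ``twisted'' element $e_3 f^{-1}$, amplified at the end by the polynomial $p$.

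The reduction is immediate: Proposition~\ref{properF}(\ref{Baa}) gives $\Phi(f)\Phi(g)\subset\Phi(fg)$ with $\dom(\Phi(f)\Phi(g))=\dom(\Phi(fg))\cap\dom(\Phi(g))$, so~(\ref{mainA}) is equivalent to $\dom(\Phi(fg))\subset\dom(\Phi(g))$. Since $e_1,e_2$ (and hence $e_1 e_2$) have inverses in $\mathcal{F}_r$, the maps $\Phi(e_2)$ and $\Phi(e_1e_2)$ are injective by Proposition~\ref{properF}(\ref{inverrr}), and Proposition~\ref{properF}(\ref{Baa}) applied to $e_2^{-1}\cdot(e_2 g)=g$ and $(e_1e_2)^{-1}\cdot(e_1e_2fg)=fg$ yields the regulariser characterisations
\[
\dom(\Phi(g))=\{x\in X:\Phi(e_2 g)x\in\ran(\Phi(e_2))\},
\]
\[
\dom(\Phi(fg))=\{x\in X:\Phi(e_1 e_2 fg)x\in\ran(\Phi(e_1 e_2))\}.
\]
For $x\in\dom(\Phi(fg))$ set $u:=\Phi(e_2 g)x\in X$. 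Using the homomorphism property of $\Phi$ on $\mathcal{F}_b$, one has $\Phi(e_1 e_2 fg)=\Phi(e_1 f)\Phi(e_2 g)=\Phi(f)\Phi(e_1)\Phi(e_2 g)$ in $\Lin(X)$, so the hypothesis $x\in\dom(\Phi(fg))$ becomes
\[
\Phi(f)\Phi(e_1)u=\Phi(e_1)\Phi(e_2)v\quad\text{for some }v\in X,
\]
and the goal is to deduce $u\in\ran(\Phi(e_2))$.

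The key step is to apply the bounded operator $\Phi(e_3 f^{-1})$ to both sides of this identity. Since $\Phi(e_1)u\in\ran(\Phi(e_1))\subset\dom(\Phi(f))$ (the latter inclusion follows from $e_1 f\in\mathcal{F}_b$), and since $(e_3 f^{-1})\cdot f=e_3$ in $\mathcal{F}$, Proposition~\ref{properF}(\ref{Baa}) collapses the left-hand side to $\Phi(e_3)\Phi(e_1)u=\Phi(e_1)\Phi(e_3)u$; on the right-hand side all factors lie in $\mathcal{F}_b$ and therefore commute, producing $\Phi(e_1)\Phi(e_2)\Phi(e_3 f^{-1})v$. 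Injectivity of $\Phi(e_1)$ then yields the crucial intermediate identity
\[
\Phi(e_3)u=\Phi(e_2)\Phi(e_3 f^{-1})v\in\ran(\Phi(e_2)).
\]

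Finally, the polynomial $p$ is invoked to lift this ``infinitesimal'' statement to the desired $u\in\ran(\Phi(e_2))$. Writing $p(z)=p(0)+zr(z)$, an induction on $k$ using commutativity in $\mathcal{F}_b$ and the base case above shows that $\Phi(e_3)^k u\in\ran(\Phi(e_2))$ for every $k\ge1$, hence $\Phi(e_3 r(e_3))u\in\ran(\Phi(e_2))$. The hypothesis~(\ref{nnA}) on the other hand gives $\Phi(p(e_3))u\in\ran(\Phi(e_1e_2))\subset\ran(\Phi(e_2))$. Subtracting and using $p(0)\ne0$ one obtains $p(0)u=\Phi(p(e_3))u-\Phi(e_3 r(e_3))u\in\ran(\Phi(e_2))$, so $u\in\ran(\Phi(e_2))$ and therefore $x\in\dom(\Phi(g))$, as required. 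I expect the main obstacle to be the bookkeeping in the key step: one must cancel $f$ against $f^{-1}$, commute everything past $\Phi(e_1)$ and $\Phi(e_2)$, and track carefully which operators are defined on which domains. The polynomial $p$ with $p(0)\ne0$ plays the essential role of inverting ``multiplication by $\Phi(e_3)$'' modulo $\ran(\Phi(e_2))$, which is exactly what bridges the gap between $\Phi(e_3)u\in\ran(\Phi(e_2))$ and $u\in\ran(\Phi(e_2))$.
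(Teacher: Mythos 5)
Your proposal is correct and follows essentially the same route as the paper's proof: reduce to $\dom(\Phi(fg))\subset\dom(\Phi(g))$, use the bounded operators $\Phi(e_1f)$, $\Phi(e_2g)$, $\Phi(e_3f^{-1})$ and their commutation to show that $\Phi(e_3)$ applied to $\Phi(e_2g)x$ lands in $\ran(\Phi(e_2))$, and then remove the factor $\Phi(e_3)$ via the polynomial identity $p(z)=p(0)+zr(z)$ together with hypothesis \eqref{nnA}. The only (immaterial) differences are that you cancel the injective factor $\Phi(e_1)$ immediately after the key identity rather than at the end, and you phrase the polynomial step as a subtraction rather than via $\alpha p(z)+z\tilde p(z)=1$.
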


\begin{proof}
By Remark \ref{RemPr} it is enough to prove that
\begin{equation}\label{inclusA}
\dom(\Phi(f g))\subset \dom(\Phi(g)).
\end{equation}
We define the bounded operators
(see Proposition \ref{properF}(\ref{grrr}))
\[
F:=\Phi(f)\Phi(e_1)=\Phi(f e_1),\quad G:=\Phi(g)\Phi(e_2)=\Phi(ge_2),
\]
and
\[
\quad Q:=\Phi(f^{-1})\Phi(e_3)=\Phi(f^{-1}e_3).
\]
Let $x \in \dom(\Phi(fg))$.  By Proposition \ref{properF}(\ref{grrr}) we have
\[
\Phi(e_1 e_2) \Phi(f g)x=\Phi(f g) \Phi(e_1 e_2)x,
\]
and
\begin{eqnarray*}
F G x=\Phi(f e_1)\Phi(g e_2)x &=& \Phi((f g) ( e_1 e_2))x \\
&=& \Phi(fg)\Phi(e_1e_2)x \\
&=& \Phi(e_1 e_2) \Phi(f g)x\in \ran(\Phi(e_1 e_2)).
\end{eqnarray*}
Since
\[
Q\Phi(e_1  e_2)=\Phi(e_1 e_2)Q
\]
by Proposition \ref{properF}(\ref{grrr}), it follows that
\begin{equation}\label{e1e2A}
\Phi(e_3) \Phi(e_1) G x=\Phi((f^{-1} e_3) (f e_1)  (ge_2))x
=Q F G x \in \ran(\Phi(e_1 e_2)).
\end{equation}
Since $p(0)\ne0$, there exist a polynomial $\tilde p(z)$ and $\alpha \in \C$ such that
$$
\alpha p(z)  + z \tilde p(z) = 1.
$$
Putting $y = \Phi(e_1)Gx$, we have
$$
y = \alpha p(\Phi(e_3)) y + \tilde p(\Phi(e_3)) \Phi(e_3)y \in \ran(\Phi(e_1 e_2)),
$$
using the assumption (\ref{nnA}) and (\ref{e1e2A}). 
Moreover, the operators $\Phi(e_1)$, $\Phi(e_2)$ are injective by
Proposition \ref{properF}(\ref{inverrr}). Then we obtain
that
\[
\Phi(e_1) \Phi(g) \Phi(e_2)x=\Phi(e_1) G x\in  \ran(\Phi(e_1  e_2))
=\ran(\Phi(e_1)\Phi_2(e_2)),
\]
and  this gives
\[
\Phi(g) \Phi(e_2)x\in \ran(\Phi(e_2)),\quad  x\in \dom(\Phi(fg)).
\]
Since $\Phi(g e_2)x = \Phi(g)\Phi(e_2)x$ by Proposition  \ref{properF}(\ref{grrr}), we obtain that
$x\in \dom(\Phi(g))$ and then we have the inclusion
(\ref{inclusA}).
\end{proof}

The assumed invertibility of $e_j$ was used in the proof of Theorem \ref{MjA} only to ensure that $\Phi(e_j)$ is injective.  Hence the following variant is also true.

\begin{corollary}\label{MjA00} Let  $f$, $g\in \mathcal{F}_r$
and $f^{-1}\in \mathcal{F}_r$ with regularisers
$e_1, e_2, e_3\in \mathcal{E}$, respectively, so
\[
e_1  f\in \mathcal{E},\quad e_2 g\in \mathcal{E},\quad
e_3 f^{-1}\in \mathcal{E},
\]
and the bounded operators $\Phi(e_j)$ are injective for $j=1,2,3$. Assume that
the condition \eqref{nnA} holds. Then the product formula \eqref{mainA} is true.
\end{corollary}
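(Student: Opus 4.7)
The plan is to observe that the invertibility of $e_1,e_2,e_3$ in $\mathcal{F}_r$ entered the proof of Theorem~\ref{MjA} at exactly one place: to deduce, via Proposition~\ref{properF}(\ref{inverrr}), that the bounded operators $\Phi(e_1)$ and $\Phi(e_2)$ are injective. Since injectivity of $\Phi(e_j)$ is now imposed directly as a hypothesis (and is in any case built into the definition of a regulariser), I would simply rerun the argument of Theorem~\ref{MjA} with the obvious cosmetic changes and verify that every other step still goes through under the weaker hypotheses.

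Concretely, I would first invoke Remark~\ref{RemPr} to reduce the claim to the inclusion $\dom(\Phi(fg))\subset\dom(\Phi(g))$. Since $e_1f,\ e_2g,\ e_3f^{-1}\in\mathcal{E}$, the operators
\[
F:=\Phi(fe_1)=\Phi(f)\Phi(e_1),\quad G:=\Phi(ge_2)=\Phi(g)\Phi(e_2),\quad Q:=\Phi(f^{-1}e_3)=\Phi(f^{-1})\Phi(e_3)
\]
are bounded by Proposition~\ref{properF}(\ref{grrr}). For $x\in\dom(\Phi(fg))$, the same commutation identities used before, together with $f^{-1}f={\bf i}$, give
\[
\Phi(e_3)\Phi(e_1)Gx \;=\; QFGx \;=\; \Phi(e_1e_2)\Phi(fg)x \;\in\; \ran(\Phi(e_1e_2)).
\]

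Next I would choose a polynomial $\tilde p$ and $\alpha\in\C$ with $\alpha p(z)+z\tilde p(z)=1$ (available since $p(0)\ne 0$) and set $y:=\Phi(e_1)Gx$. Because $\tilde p(e_3),\,e_1e_2\in\mathcal{E}$ commute in the algebra, $\tilde p(\Phi(e_3))$ commutes with $\Phi(e_1e_2)$, so the decomposition
\[
y \;=\; \alpha\, p(\Phi(e_3))\,y \;+\; \tilde p(\Phi(e_3))\,\Phi(e_3)y
\]
places both summands in $\ran(\Phi(e_1e_2))$: the first by hypothesis \eqref{nnA}, the second because $\Phi(e_3)y\in\ran(\Phi(e_1e_2))$ and $\tilde p(\Phi(e_3))$ preserves this range. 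Thus $\Phi(e_1)\Phi(g)\Phi(e_2)x=y\in\ran(\Phi(e_1)\Phi(e_2))$. Stripping $\Phi(e_1)$ from the left using its assumed injectivity gives $\Phi(g)\Phi(e_2)x\in\ran(\Phi(e_2))$, i.e.\ $\Phi(e_2 g)x\in\ran(\Phi(e_2))$; by the very definition of $\Phi(g)=\Phi(e_2)^{-1}\Phi(e_2g)$ via the regulariser $e_2$ (legitimate thanks to injectivity of $\Phi(e_2)$), this means $x\in\dom(\Phi(g))$, as required.

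I do not anticipate any real obstacle: the polynomial step is the only nontrivial part, and it uses of $\Phi(e_3)$ only boundedness and commutation with $\Phi(e_1e_2)$, both of which remain valid. The injectivity of $\Phi(e_3)$ asserted in the hypothesis is in fact not used in the argument itself; it appears only because it is part of the definition of a regulariser.
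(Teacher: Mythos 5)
Your proposal is correct and takes exactly the paper's approach: the paper proves this corollary by the single observation that the invertibility of the $e_j$ in Theorem~\ref{MjA} was used only to secure injectivity of the $\Phi(e_j)$, which is now assumed directly, and you rerun that argument verbatim. (One cosmetic slip in your display: the correct identity is $FGx=\Phi(e_1e_2)\Phi(fg)x$, whence $QFGx=\Phi(e_1e_2)\,Q\,\Phi(fg)x\in\ran(\Phi(e_1e_2))$; the membership you conclude, and everything downstream of it, is nevertheless right.)
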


\section{Holomorphic calculus} \label{secthol}

We will apply the results of the previous section to the
holomorphic calculus for sectorial operators. This functional calculus is described in detail in \cite{Ha06}, \cite{Weis} and other texts, and we give only a short summary here.

For $\phi \in (0,\pi)$, let
\[
H_0^\infty(S_\phi):=
\left\{f\in \mathcal{O}(S_\phi) : \text{$|f(z)|\leq C\min(|z|^s, |z|^{-s})$ for some $C,s>0$}\right\},
\]
where $\mathcal{O}(S_\phi)$ denotes
the space of all holomorphic functions
on the sector $S_\phi$.
Let
\begin{equation*}
\tau(z):=\frac{z}{(1+z)^2}
\end{equation*}
and
\begin{eqnarray*}
\mathcal{B}(S_\phi)& :=& \left\{f :  \text{$\tau^n f\in H_0^\infty(S_\phi)$ for some $n\in\N$}\right\} \\
&=& \left\{f\in \mathcal{O}(S_\phi) : \text{$|f(z)|\leq C\max \left(|z|^s, |z|^{-s} \right)$ for some $C,s>0$}\right\}.
\end{eqnarray*}

Let  $A\in
\operatorname{Sect}(\omega)$ for some angle $\omega$, and let $\omega<\phi<\pi$.   Let
\[
\mathcal{E}=H_0^\infty(S_\phi),\quad
\mathcal{F}=\mathcal{O}(S_\phi).
\]
For $f \in H_0^\infty(S_\phi)$, we define
\begin{equation}\label{Cauchy}
\Phi(f) = f(A):=\frac{1}{2\pi i}\int_\Gamma f(z) (z-A)^{-1}\,dz,
\end{equation}
where $\Gamma$ is the downward oriented boundary of a sector
$S_{\omega_0}$, with $\omega<\omega_0<\phi$.  This definition is independent of $\omega_0$, and
\[
\Phi: H_0^\infty(S_\phi)\mapsto \mathcal{L}(X),\quad \Phi(f)=f(A),
\]
is an algebra homomorphism, with $\Phi(\tau)=A(1+A)^{-2}$. 

Now assume that $A$ is injective, so $\Phi(\tau)$ is injective.  Then we have a proper abstract functional calculus for $A$, and the extended calculus is called the {\em holomorphic calculus} for $A$.  Any function  $f \in \mathcal B(S_\phi)$ has a regulariser of the form $\tau^n$, and
\begin{equation*}
f(A)=[\tau^n(A)]^{-1}(\tau^n f)(A),
\end{equation*}
where $n\in\N$ is large enough to ensure that
\[
\tau^n f\in H_0^\infty (S_\phi).
\]

This functional calculus formally depends on a choice of $\phi$, but the calculi are consistent when we identify a function $f$ on $S_\phi$ with its restriction to $S_\psi$ for $\omega < \psi < \phi$.  We may therefore make this identification and consider our holomorphic calculus to be defined on the algebra
\[
\mathcal{B}[S_\omega]:=\bigcup_{\omega<\phi<\pi}\,\mathcal{B}(S_\phi).
\]

We define
\[
H(A):=\left\{f\in \mathcal{B}[S_\omega] : f(A) \in \mathcal{L}(X)\right\}.
\]
The elementary product rule (Proposition \ref{properF}(\ref{grrr})) says that
\begin{equation}\label{zv}
g(A)f(A)\subset f(A)g(A)=[f g](A),\qquad f \in \mathcal{B}[S_\omega],\,g\in H(A).
\end{equation}
From Theorem \ref{MjA} we obtain the following product rule for the extended
holomorphic calculus.

\begin{theorem}\label{Mj}
Let $A\in \operatorname{Sect}(\omega)$ be injective,
$\omega<\phi<\pi$ and $f$, $g\in \mathcal{B}(S_\phi)$.
Assume that $f$ has no zeros in $S_\phi$ and $1/f\in \mathcal{B}(S_\phi)$, and that there exist $e_1,e_2,e_3\in H(A)$, and a complex polynomial $p(z)$ with $p(0)\ne0$, such that 
\begin{enumerate}[\rm(i)]
\item for $j=1,2,3$, $e_j$ has no zeros in $S_\phi$ and $1/e_j\in \mathcal{B}(S_\phi)$,
\item
\[
e_1 f\in H(A),\quad e_2 g\in H(A), \quad
\frac{e_3}{f}\in H(A),
\]
\item 
\begin{equation}
\ran (p(e_3(A)))\subset \ran((e_1e_2)(A)).
\label{nn}
\end{equation}
\end{enumerate}
Then the product formula \eqref{main} holds.
\end{theorem}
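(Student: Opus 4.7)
The plan is to apply Theorem \ref{MjA} directly, with the abstract setup instantiated as the holomorphic calculus for $A$: namely $\mathcal E = H_0^\infty(S_\phi)$, $\mathcal F = \mathcal O(S_\phi)$, and $\Phi$ the homomorphism on $\mathcal E$ given by (\ref{Cauchy}), extended by regularisation with powers of $\tau$. The first step is to identify how the concrete hypotheses sit inside this abstract framework: any $h\in\mathcal B(S_\phi)$ satisfies $\tau^n h\in H_0^\infty(S_\phi)$ for some $n$, so $\mathcal B(S_\phi)\subset \mathcal F_r$, and functions in $H(A)$ lie in $\mathcal F_b$.

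Next, I would verify each hypothesis of Theorem \ref{MjA} in turn. Since $f,g\in\mathcal B(S_\phi)\subset\mathcal F_r$, and since $f$ is zero-free with $1/f\in\mathcal B(S_\phi)$, the functional inverse of $f$ in $\mathcal F$ coincides with $1/f$ and also lies in $\mathcal F_r$. Each $e_j\in H(A)\subset\mathcal F_b$, and the conditions that $e_j$ be zero-free in $S_\phi$ with $1/e_j\in\mathcal B(S_\phi)$ ensure that $e_j$ possesses an inverse $1/e_j\in\mathcal F_r$ in the sense required by Theorem \ref{MjA}. The products $e_1 f$, $e_2 g$, $e_3/f$ are given to be in $H(A)\subset\mathcal F_b$ by assumption (ii).

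The last hypothesis to check is the polynomial condition (\ref{nnA}) from the abstract theorem. Since $H(A)$ is a unital algebra and $p$ is a polynomial, $p(e_3)\in H(A)\subset\mathcal F_b$; by the homomorphism property of $\Phi$ on $\mathcal F_b$ (Proposition \ref{properF}(\ref{grrr})) we obtain $\Phi(p(e_3)) = p(\Phi(e_3)) = p(e_3(A))$, and likewise $\Phi(e_1 e_2) = (e_1 e_2)(A)$. Thus the given condition (\ref{nn}) is literally condition (\ref{nnA}) in the abstract framework. Invoking Theorem \ref{MjA} then yields $\Phi(f)\Phi(g)=\Phi(fg)$, which is exactly the product formula (\ref{main}).

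I do not foresee a genuine obstacle: the proof is a bookkeeping exercise matching concrete notions ($H(A)$, ``$1/f\in\mathcal B(S_\phi)$'', the holomorphic calculus product and its range) to the abstract notions ($\mathcal F_b$, invertibility in $\mathcal F_r$, $\Phi$-products and ranges). The single point that deserves a line of care is confirming that the polynomial evaluations and products in $\mathcal F$ are carried by $\Phi$ to the corresponding operator polynomials and products, which is immediate from the fact that $e_1,e_2,e_3$ and $p(e_3)$ all belong to the subalgebra $H(A)$ on which $\Phi$ is a bounded algebra homomorphism.
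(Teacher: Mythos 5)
Your proposal is correct and is exactly the argument the paper intends: the paper gives no separate proof of Theorem \ref{Mj}, stating only that it follows from Theorem \ref{MjA}, and your instantiation (with $\mathcal E=H_0^\infty(S_\phi)$, $\mathcal F=\mathcal O(S_\phi)$, the zero-free conditions supplying inverses in $\mathcal F_r$, membership in $H(A)$ supplying membership in $\mathcal F_b$, and multiplicativity of $\Phi$ on $\mathcal F_b$ identifying \eqref{nn} with \eqref{nnA}) is the intended bookkeeping.
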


\begin{corollary}\label{cor1}
Let  $A\in \operatorname{Sect}(\omega)$ be injective, $\phi\in
(\omega,\pi)$ and $f$, $g\in\mathcal{B}(S_\phi)$.  Assume that $f$ has no zeros in $S_\phi$ and $1/f\in \mathcal{B}(S_\phi)$, and
\begin{equation}
\frac{f(z)}{(1+z)^k}\in H(A),\quad
\frac{g(z)}{(1+z)^m}\in H(A),\quad
\frac{z^r}{f(z)(1+z)^r}\in H(A),
\label{3cond}
\end{equation}
for some $k,m,r \in \N\cup\{0\}$.
Then the product formula \eqref{main} holds.
\end{corollary}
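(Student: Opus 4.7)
The plan is to apply Theorem \ref{Mj} with
\[
e_1(z) := (1+z)^{-k}, \qquad e_2(z) := (1+z)^{-m}, \qquad e_3(z) := z^r(1+z)^{-r},
\]
and the polynomial $p(x) := (1-x)^{k+m}$, which satisfies $p(0)=1\ne 0$. Each $e_j$ is a rational function whose only pole is at $z=-1\notin \overline{S}_\phi$, so $e_j\in \mathcal{B}(S_\phi)$, and the corresponding operators $(1+A)^{-k}$, $(1+A)^{-m}$, $(A(1+A)^{-1})^r$ are bounded by sectoriality of $A$, placing $e_1,e_2,e_3$ in $H(A)$. None of the $e_j$ vanishes in $S_\phi$, and the reciprocals $(1+z)^k$, $(1+z)^m$, $(1+z)^r/z^r$ belong to $\mathcal{B}(S_\phi)$ by inspection of their growth at $0$ and $\infty$. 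This settles hypothesis (i) of Theorem \ref{Mj}, and hypothesis (ii) is exactly the assumption \eqref{3cond}.

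The crucial step is the range inclusion (iii). The main observation is the degree cancellation in $(1+z)^r - z^r$: for $r\ge 1$ one has
\[
1-e_3(z) \;=\; \frac{(1+z)^r - z^r}{(1+z)^r} \;=\; \frac{S(z)}{1+z},
\]
where $S(z) := \bigl((1+z)^r - z^r\bigr)/(1+z)^{r-1} = \sum_{i=0}^{r-1}\binom{r}{i}\,z^i(1+z)^{-(r-1)}$. Each summand corresponds to a bounded operator $\binom{r}{i}A^i(1+A)^{-(r-1)}=\binom{r}{i}(A(1+A)^{-1})^i(1+A)^{-(r-1-i)}$ since $0 \le i \le r-1$, so $S\in H(A)$ and $S(A)$ commutes with $(1+A)^{-1}$. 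Since everything in sight is a function of $A$, I would iterate this factorisation to obtain
\[
p(e_3(A)) \;=\; (I - e_3(A))^{k+m} \;=\; (1+A)^{-(k+m)}\,S(A)^{k+m},
\]
whose range is contained in $\operatorname{ran}((1+A)^{-(k+m)}) = \operatorname{ran}((e_1e_2)(A))$, which is precisely \eqref{nn}. The degenerate cases require no extra work: if $r=0$ then $1-e_3\equiv 0$ so $p(e_3(A))=0$, and if $k+m=0$ then $p\equiv 1$ while $(e_1e_2)(A)=I$.

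With (i)--(iii) verified, Theorem \ref{Mj} yields the product formula \eqref{main}. The only non-routine point is the algebraic identification of $(I-e_3(A))^{k+m}$ as a left multiple of $(1+A)^{-(k+m)}$ by a bounded operator; once one notices the degree-drop in $(1+z)^r-z^r$, this reduces to a short manipulation inside the commutative algebra of holomorphic functions of $A$, and everything else is bookkeeping.
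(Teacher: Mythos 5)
Your proposal is correct and follows essentially the same route as the paper: the same choice of $e_1,e_2,e_3$ and $p(z)=(1-z)^{k+m}$, with the range inclusion \eqref{nn} obtained from the degree drop in $(1+z)^r-z^r$, which gives $1-A^r(1+A)^{-r}=B(1+A)^{-1}$ for a bounded $B$ commuting with $(1+A)^{-1}$ (the paper leaves $B$ implicit where you compute it as $S(A)$). Your handling of the degenerate cases $r=0$ and $k+m=0$ is also fine.
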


\begin{proof}
The statement follows from Theorem \ref{Mj}
if we take the functions
\[
\begin{aligned}
e_1(z)&=\frac{1}{(z+1)^k},\quad &e_2(z) &=\frac{1}{(z+1)^m}, \\
e_3(z)&=\frac{z^r}{(z+1)^r}, \quad &p(z) &= (1-z)^{k+m}.
\end{aligned}
\]
The condition (\ref{nn}) holds because
\[
1-A^r(1+A)^{-r} = B(1+A)^{-1} = (1+A)^{-1}B,
\]
where $B$ is a bounded operator, and hence 
\[
\ran\left((1-A^r(1+A)^{-r})^{k+m}\right) \subset \ran \left((1+A)^{-(k+m)}\right).
\qedhere
\]

\end{proof}

Note that we cannot remove the third condition in (\ref{3cond})
of  Corollary \ref{cor1}. For example let  $f,g\in \mathcal{B}(S_\pi)$ be
\[
f(z)=\frac{1}{1+z},\qquad g(z)=z.
\]
Then for any sectorial  operator $A$ we have
\[
f(z)\in H(A),\quad \frac{g(z)}{1+z}\in H(A), 
\]
so the first two conditions in (\ref{3cond}) are satisfied with
$k=0$ and $m=1$.  If $A$ is unbounded, then (as observed in the Introduction)
\begin{equation} \label{fggf}
f(A)g(A)=(1+A)^{-1}A\not=A(1+A)^{-1}=g(A)f(A)=[f g](A).
\end{equation}
This is consistent with Theorem \ref{Mj} because,
for any $r\in \N$,
\[
\frac{z^r}{f(z)(1+z)^r}=\frac{z^r}{(1+z)^{r-1}}\not \in H(A).
\]
On the other hand,
\[
\frac{z}{g(z)(1+z)}=\frac{1}{1+z}\in H(A),
\]
so the final equality in (\ref{fggf}) follows from Corollary \ref{cor1} with $f$ and $g$ interchanged.

\section{Extended Stieltjes calculus}
In this section $A$ will be a sectorial operator
on a Banach space $X$, and we shall construct the (extended) Stieltjes calculus for $A$.  When $A$ is injective, we show that the extended holomorphic calculus and the extended Stieltjes calculus are consistent (Theorem \ref{ConHPH}). 
We also show that an extension of Hirsch's definition (\ref{stilclos}) is a particular case of the extended Stieltjes calculus (Corollary \ref{coins}).  Finally we consider the product formula for the extended Stieltjes calculus (Theorem \ref{cor3}).

\subsection{The Stieltjes algebra}\label{SubGSt}
In order to construct an extended Stieltjes calculus we need first to introduce a suitable algebra of functions.  We begin by reviewing a few properties of generalised Stieltjes functions here; for a detailed account, see \cite{Karp}.

A function
$f:(0,\infty)\mapsto [0,\infty)$ is called a {\it generalised Stieltjes function of order} $\alpha>0$ if it can be represented as
\begin{equation}\label{salpha}
f(z)=a+\int_0^\infty \frac{\mu(dt)}{(z+t)^\alpha},\qquad z>0,
\end{equation}
where $a\ge 0$ and $\mu$ is a (unique) positive Radon measure on $[0,\infty)$
satisfying
\begin{equation}\label{mmu}
\int_0^\infty\frac{\mu(dt)}{(1+t)^\alpha}<\infty.
\end{equation}
The class of generalised Stieltjes functions of order $\alpha$ will be denoted by $\mathcal{S}_\alpha$.

When $\alpha=1$, the class $\mathcal{S}_1$ of generalised Stieltjes functions of order $1$ is simply the class of Stieltjes functions,
and we will write $\mathcal{S}$ in place of $\mathcal{S}_1$, thus following established notation.  A very informative
discussion of Stieltjes functions is contained in \cite[Chapter 2]{SSV}.

There are other representations for Stieltjes
functions in the literature. For example, we note
the representation
\begin{equation}\label{compbern1}
f(z) = \frac{b}{z^\alpha} + \int_0^\infty \frac{\nu(\ud{s})}{(1 +zs)^\alpha},
\qquad \int_0^{\infty}\frac{\nu(\ud{s})}{(1 +s)^\alpha} <\infty.
\end{equation}
The representations (\ref{salpha}) and (\ref{compbern1}) are
equivalent in the sense that one of them is transformed into the other by the
change of variable $s=1/t$ on $(0,\infty)$, with $b=\mu(\{0\})$ and $a = \nu(\{0\})$, and the measures $\mu$ and $\nu$ satisfy the same integrability condition (\ref{mmu})   (see \cite{Karp}).

For $0 < \beta<\alpha$, one has
\begin{equation} \label{fracpow}
z^{-\beta} = \frac{\Gamma(\alpha)}{\Gamma(\beta)\Gamma(\alpha-\beta)} \int_0^\infty \frac{s^{\beta-1}}{(1+zs)^\alpha} \, ds  \;\in \mathcal{S}_\alpha.
\end{equation}
Here and subsequently, $\Gamma$ denotes the gamma function.

If $f$ is a generalised Stieltjes function (of any positive order), then $f$ is decreasing on $(0,\infty)$ and $f$ admits a unique analytic extension to
$\C\setminus (-\infty,0]$, which is given by \eqref{salpha} and will be denoted by the same symbol. 

Each $\mathcal{S}_\alpha$ is a convex cone, and the following inclusions 
hold \cite[Theorem 3]{Karp}:
\begin{equation}\label{Sinclus}
\mathcal{S}_\alpha\,\subset\,\mathcal{S}_\beta,\;\;0<\alpha<\beta.
\end{equation}
For products, a classical result  of Widder \cite[Chapter 7, $\S\,7.4$]{widder} states
that for functions  $f_1\in \mathcal{S}_{\alpha_1}$, $f_2\in \mathcal{S}_{\alpha_2}$ of the particular form
\begin{equation*}
f_j(z)=\int_{0+}^\infty \frac{\mu_j(ds)}{(z+s)^{\alpha_j}},\quad j=1,2,
\end{equation*}
there exists a positive Radon measure $\nu=\nu(\mu_1,\mu_2)$ on $(0,\infty)$ such  that
\[
\int_{0+}^\infty \frac{\nu(ds)}{(1+s)^{\alpha_1+\alpha_2}}<\infty
\]
and
\begin{equation*}
f_1(z) f_2(z)=\int_{0+}^\infty \frac{\nu(ds)}{(z+s)^{\alpha_1+\alpha_2}}, \quad z>0.
\end{equation*}
Thus $f_1 f_2\in \mathcal{S}_{\alpha_1+\alpha_2}$.  In order to include as many functions as possible we need to extend this result to the whole class of generalised Stieltjes functions allowed in (\ref{salpha}).

\begin{lemma}\label{WiddR}
Let $f_j\in \mathcal{S}_{\alpha_j}$ for some $\alpha_j>0$, for  $j=1,2$. Then
$f_1 f_2\in \mathcal{S}_{\alpha_1+\alpha_2}$.
\end{lemma}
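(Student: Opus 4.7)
My plan is to decompose each $f_j$ into its constant part and its Stieltjes-integral part, handle the easy terms by the inclusion \eqref{Sinclus} and by convexity of $\mathcal{S}_{\alpha_1+\alpha_2}$, and reduce the one genuinely nontrivial case to a direct application of the Beta/Feynman-parameter identity. Writing $f_j(z) = a_j + \tilde{f}_j(z)$ with $\tilde{f}_j(z) := \int_{[0,\infty)} \mu_j(dt)/(z+t)^{\alpha_j}$, I will expand
\[
f_1 f_2 \;=\; a_1 a_2 \;+\; a_1 \tilde{f}_2 \;+\; a_2 \tilde{f}_1 \;+\; \tilde{f}_1 \tilde{f}_2.
\]
The constant $a_1 a_2$ is trivially in $\mathcal{S}_{\alpha_1+\alpha_2}$, and each $a_j\tilde{f}_{3-j}$ belongs to $\mathcal{S}_{\alpha_{3-j}} \subset \mathcal{S}_{\alpha_1+\alpha_2}$ by \eqref{Sinclus}. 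Since $\mathcal{S}_{\alpha_1+\alpha_2}$ is a convex cone, everything reduces to showing $\tilde{f}_1 \tilde{f}_2 \in \mathcal{S}_{\alpha_1+\alpha_2}$.

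For the main term I will invoke the classical Beta (Feynman-parameter) identity
\[
\frac{1}{(z+t_1)^{\alpha_1}(z+t_2)^{\alpha_2}} \;=\; \frac{\Gamma(\alpha_1+\alpha_2)}{\Gamma(\alpha_1)\Gamma(\alpha_2)} \int_0^1 \frac{r^{\alpha_1-1}(1-r)^{\alpha_2-1}}{[z + rt_1 + (1-r)t_2]^{\alpha_1+\alpha_2}} \, dr,
\]
valid for all $t_1, t_2 \ge 0$ and $z > 0$ (a quick change of variables verifies it, and at $t_1=t_2$ both sides collapse consistently to $(z+t_1)^{-\alpha_1-\alpha_2}$). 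Inserting this into $\int\int \mu_1(dt_1)\mu_2(dt_2)/[(z+t_1)^{\alpha_1}(z+t_2)^{\alpha_2}]$ and applying Fubini--Tonelli to the nonnegative integrand yields
\[
\tilde{f}_1(z)\tilde{f}_2(z) \;=\; \int_{[0,\infty)} \frac{\nu(d\tau)}{(z+\tau)^{\alpha_1+\alpha_2}},
\]
where $\nu$ is the pushforward of the positive product measure $\frac{\Gamma(\alpha_1+\alpha_2)}{\Gamma(\alpha_1)\Gamma(\alpha_2)} r^{\alpha_1-1}(1-r)^{\alpha_2-1}\,dr \otimes \mu_1 \otimes \mu_2$ under $(r,t_1,t_2) \mapsto rt_1 + (1-r)t_2$.

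To finish, I will verify the integrability condition \eqref{mmu} for $\nu$ by evaluating at $z=1$:
\[
\int_{[0,\infty)} \frac{\nu(d\tau)}{(1+\tau)^{\alpha_1+\alpha_2}} \;=\; \tilde{f}_1(1)\,\tilde{f}_2(1) \;\le\; f_1(1)f_2(1) \;<\; \infty,
\]
which gives $\tilde{f}_1\tilde{f}_2 \in \mathcal{S}_{\alpha_1+\alpha_2}$ and hence $f_1f_2 \in \mathcal{S}_{\alpha_1+\alpha_2}$. The main obstacle is that Widder's theorem as quoted in the paper only handles representing measures on the open half-line $(0,\infty)$ and excludes any constant terms, whereas the full class $\mathcal{S}_{\alpha_j}$ allows atoms of $\mu_j$ at $0$ together with a nonzero $a_j$; the Feynman identity bypasses both defects uniformly in $t_1,t_2 \ge 0$. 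Conceptually, this is only a mild reworking of the classical Widder argument, but it accommodates the constants and the atoms at $0$ in a single stroke.
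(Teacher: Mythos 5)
Your proof is correct, and it takes a genuinely different route from the paper's. The paper first splits off the atoms at $0$, writing $f_j = a_j + b_j z^{-\alpha_j} + \int_{0+}^\infty \mu_j(ds)/(z+s)^{\alpha_j}$, invokes Widder's classical product theorem as a black box for the product of the two integrals supported on $(0,\infty)$, disposes of most remaining terms via \eqref{Sinclus}, and then handles the genuinely new cross terms $z^{-\beta}\int_{0+}^\infty \mu(ds)/(z+s)^{\alpha}$ by a one-variable Beta-type identity quoted from Prudnikov's tables (which is exactly your Feynman identity specialised to $t_1=0$, after the substitution $t=(1-r)s$). You instead apply the full two-variable Beta parametrisation
\[
\frac{1}{(z+t_1)^{\alpha_1}(z+t_2)^{\alpha_2}} = \frac{\Gamma(\alpha_1+\alpha_2)}{\Gamma(\alpha_1)\Gamma(\alpha_2)} \int_0^1 \frac{r^{\alpha_1-1}(1-r)^{\alpha_2-1}\,dr}{[z + rt_1 + (1-r)t_2]^{\alpha_1+\alpha_2}}
\]
uniformly over $[0,\infty)^2$, which absorbs the atoms at $0$ into the same computation and renders Widder's theorem unnecessary; Tonelli applies since everything is nonnegative, and evaluating at $z=1$ yields \eqref{mmu} for the pushforward measure $\nu$ (which in particular makes $\nu$ locally finite, hence Radon). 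What the paper's route buys is a short supplement to a citable classical result; what yours buys is a self-contained, single-stroke argument that treats the interior of the half-line and the endpoint on the same footing. Both are sound.
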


\begin{proof}
Let
\begin{equation*}
f_j(z)=a_j+\int_0^\infty \frac{\,\mu_j(ds)}{(z+s)^{\alpha_j}}
=a_j+ \frac{b_j}{z^{\alpha_j}} +\int_{0+}^\infty \frac{\,\mu_j(ds)}{(z+s)^{\alpha_j}},
\end{equation*}
where $b_j=\mu_j(\{0\})\ge 0,\; j=1,2$.  Using Widder's result and
(\ref{Sinclus}), we obtain that
\[
f_1(z)f_2(z)- \frac{b_1}{z^{\alpha_1}}\int_{0+}^\infty  \frac{\mu_2(ds)}{(z+s)^{\alpha_2}}
-\frac{b_2}{z^{\alpha_2}}\int_{0+}^\infty  \frac{\mu_1(ds)}{(z+s)^{\alpha_1}}\in \mathcal{S}_{\alpha_1+\alpha_2}.
\]
Thus it suffices to show that a function
\[
z^{-\beta}\int_{0+}^\infty  \frac{\mu(ds)}{(z+s)^{\alpha}} \in \mathcal{S}_{\alpha+\beta},
\]
when $\alpha,\beta>0$ and $\mu$ satisfies (\ref{mmu}).

From the representation \cite[p.302]{Prudnikov1}
\[
\frac{1}{z^\beta (s+z)^\alpha}
=\frac{c_{\alpha,\beta}}{s^{\alpha+\beta-1}}
\int_0^s \frac{t^{\alpha-1}(s-t)^{\beta-1}\,dt }{(t+z)^{\alpha+\beta}},
\quad
\]
with the constant
\[
c_{\alpha,\beta}=\frac{\Gamma(\alpha+\beta)}{\Gamma(\alpha)\Gamma(\beta)},
\]
it follows  that
\[
z^{-\beta}\int_{0+}^\infty  \frac{\mu(ds)}{(z+s)^{\alpha}}
=c_{\alpha,\beta}\int_{0}^\infty
\left(\int_t^\infty
\frac{t^{\alpha-1}(s-t)^{\beta-1}}{s^{\alpha+\beta-1}}\mu(ds)
\right)\frac{dt}{(z+t)^{\alpha+\beta}}.
\]
This has the form of a function in $\mathcal{S}_{\alpha+\beta}$. 
\end{proof}

If $f \in \mathcal{S}_\alpha$ has the representation (\ref{compbern1}), then 
\[
0 \le z^\alpha f(z) \le b + \int_0^1 \nu(ds) + \int_{1+}^\infty \frac{\nu(ds)}{(1+s)^\alpha}, \qquad 0<z<1.
\]
We will need to restrict to functions $f$ which are bounded at $0$.  
Thus we consider the following subclass of generalised Stieltjes functions
\[
\mathcal{S}_{\alpha,b}=\left\{f\in \mathcal{S}_\alpha: f(z)=\int_0^\infty
\frac{\nu(ds)}{(1+zs)^\alpha},\;\;\nu([0,\infty))<\infty\right\}.
\]
By the Lebesgue dominated convergence theorem, if  $f\in
\mathcal{S}_\alpha$ then $f \in \mathcal{S}_{\alpha,b}$ if and
only if $f$ is bounded on $(0,\infty)$, and
then
\[
\sup_{z>0}\,f(z)=\lim_{z\to 0+}\,f(z)= \nu([0,\infty)).
\]
The next proposition follows from this fact, (\ref{Sinclus}) and Lemma \ref{WiddR}.

\begin{proposition}\label{Sbound}
The inclusions
\[
\mathcal{S}_{\alpha,b}\subset \mathcal{S}_{\beta,b},\quad \alpha<\beta,
\]
and
\[
\mathcal{S}_{\alpha,b}\cdot \mathcal{S}_{\beta,b}
\subset \mathcal{S}_{\alpha+\beta,b},\quad \alpha,\beta>0.
\]
hold.
\end{proposition}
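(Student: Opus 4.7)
The plan is to chain together the three ingredients cited just before the statement: the characterisation of $\mathcal{S}_{\alpha,b}$ as the bounded members of $\mathcal{S}_\alpha$, the cone inclusion \eqref{Sinclus}, and the product formula of Lemma \ref{WiddR}. Neither conclusion requires any new analysis, so the proof will essentially be a bookkeeping exercise: at each stage verify membership in the appropriate $\mathcal{S}_\gamma$ and verify boundedness on $(0,\infty)$ separately.

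For the first inclusion, I would take $f\in\mathcal{S}_{\alpha,b}$ with $\alpha<\beta$. By definition $f\in\mathcal{S}_\alpha$, and \eqref{Sinclus} immediately gives $f\in\mathcal{S}_\beta$. The characterisation preceding the proposition states that a member of $\mathcal{S}_\beta$ lies in $\mathcal{S}_{\beta,b}$ precisely when it is bounded on $(0,\infty)$; but $f$ is already known to be bounded there (since $f\in\mathcal{S}_{\alpha,b}$), so $f\in\mathcal{S}_{\beta,b}$ as required.

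For the product inclusion, I would take $f\in\mathcal{S}_{\alpha,b}$ and $g\in\mathcal{S}_{\beta,b}$. Lemma \ref{WiddR} gives $fg\in\mathcal{S}_{\alpha+\beta}$. Since $f$ and $g$ are each bounded on $(0,\infty)$ (again by the characterisation), the product $fg$ is bounded on $(0,\infty)$. Applying the same characterisation in the reverse direction to $fg\in\mathcal{S}_{\alpha+\beta}$ yields $fg\in\mathcal{S}_{\alpha+\beta,b}$.

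There is no real obstacle here; the only subtlety is the purely notational one of being explicit that "bounded on $(0,\infty)$" is the defining feature (within $\mathcal{S}_\gamma$) of the subclass $\mathcal{S}_{\gamma,b}$, so that boundedness can be transferred across \eqref{Sinclus} and across products without having to revisit the measure representations. Consequently I expect the entire proof to fit in a few lines.
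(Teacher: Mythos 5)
Your proof is correct and follows exactly the route the paper intends: the paper itself states that the proposition "follows from this fact [the boundedness characterisation of $\mathcal{S}_{\alpha,b}$], (\ref{Sinclus}) and Lemma \ref{WiddR}", and you have simply written out that chain of deductions. No gaps.
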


Now we can obtain algebras of functions by taking linear spans and unions of the classes $\mathcal{S}_\alpha$ or $\mathcal{S}_{\alpha,b}$ for $\alpha>0$.  By \eqref{Sinclus} or Proposition \ref{Sbound}, it suffices to take the unions over $\alpha\in\N$, and we shall restrict attention to that case.

For each $n\in \N$, we let $\tilde{\mathcal{S}}_{n}$ and  $\tilde{\mathcal{S}}_{n,b}$ be the complex linear span of $\mathcal{S}_n$ and $\mathcal{S}_{n,b}$, respectively.   Thus $f\in \tilde{\mathcal{S}}_{n,b}$ if and only if $f$ has the form
\begin{equation}\label{compbern1C}
f(z) = \int_0^\infty \frac{\nu(d{s})}{(1 +zs)^n},\quad z>0,
\end{equation}
where $\nu$ is a complex Radon measure of finite total variation $|\nu|$ on $[0,\infty)$.
By Proposition \ref{Sbound},
\[
\tilde{\mathcal{S}}_{n,b}\subset \tilde{\mathcal{S}}_{m,b}, \quad m>n,
\]
and
\[
\tilde{\mathcal{S}}_{n,b}\cdot \tilde{\mathcal{S}}_{m,b} \subset
\tilde{\mathcal{S}}_{n+m,b}, \quad n,m\in \N.
\]
Let
\[
\tilde{\mathcal{S}}_b:=\bigcup_{n\in \N}\,\tilde{\mathcal{S}}_{n,b}.
\]
Then $\tilde{\mathcal{S}}_b$ is an algebra, which we may call the {\it bounded Stieltjes algebra}.

\subsection{Stieltjes functional calculus}
Let $A$ be a sectorial operator on a Banach space
$X$. We define the map $S:\tilde{\mathcal{S}}_b \to \mathcal{L}(X)$
by
\begin{equation}\label{Asfa}
S(f)x:=f(A)x=\int_0^\infty (1+sA)^{-n}x\,\nu(ds),\quad x\in X,
\end{equation}
where
\begin{equation}\label{GlF}
f(z)=\int_0^\infty \frac{\nu(ds)}{(1+zs)^n}\in \tilde{\mathcal{S}}_{n,b}\subset \tilde{\mathcal{S}}_b.
\end{equation}
By (\ref{stiloper}) the integral in \eqref{Asfa} converges in operator-norm and
\begin{equation*}
\|f(A)\|\le M(A)^n\int_{0}^\infty |\nu|(ds),\quad f\in \tilde{\mathcal{S}}_{n,b}.
\end{equation*}

For a given $f$ and $n$, the measure $\nu$ in (\ref{Asfa}) is unique (see \cite{Karp}), and the following proposition shows that the definition of $f(A)$ is independent of $n$.

\begin{proposition}\label{ProPP}
Let $f\in \tilde{\mathcal{S}}_{n,b}$ have the representation \eqref{GlF} for a measure $\nu_n$.  Let $m>n$ and let $\nu_m$ be the corresponding respresenting measure of $f$ as a member of $\tilde{\mathcal{S}}_{m,b}$. Let the bounded
operator $f_n(A)$ be defined by \eqref{Asfa} and let
\[
f_m(A)x:=\int_0^\infty (1+sA)^{-m}x\,\nu_m(ds),\quad x\in X.
\]
Then $f_n(A)=f_m(A)$.
\end{proposition}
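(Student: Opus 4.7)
The plan is to prove the stronger statement that $f_n(A)=f_{n+1}(A)$ whenever $m=n+1$, from which the general case follows by induction on $m-n$. The central ingredient is the operator identity
\begin{equation*}
(1+sA)^{-n}=\int_0^1 nu^{n-1}(1+suA)^{-(n+1)}\,du,\qquad s>0,
\end{equation*}
which I would establish by differentiating the norm-continuous map $u\mapsto u^n(1+suA)^{-n}$ on $[0,1]$:
\begin{equation*}
\frac{d}{du}\bigl[u^n(1+suA)^{-n}\bigr]=nu^{n-1}(1+suA)^{-n}-nu^n\,sA(1+suA)^{-(n+1)}=nu^{n-1}(1+suA)^{-(n+1)},
\end{equation*}
and integrating from $0$ to $1$. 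The right-hand side is norm-continuous in $u$ and bounded by $nu^{n-1}M(A)^{n+1}$, so the Bochner integral exists.

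Substituting this identity into $f_n(A)x=\int_0^\infty(1+sA)^{-n}x\,\nu_n(ds)$, exchanging the order of integration by Fubini (permissible since $|\nu_n|$ is finite and the integrand is uniformly bounded), and then changing variables $\tau=su$ for each fixed $s>0$ followed by a second Fubini exchange, produces a representation
\begin{equation*}
f_n(A)x=\int_0^\infty(1+\tau A)^{-(n+1)}x\,\tilde\nu(d\tau),
\end{equation*}
where $\tilde\nu$ is a finite complex Radon measure on $[0,\infty)$ consisting of an atom $\nu_n(\{0\})$ at $0$ together with the Lebesgue density $\tau\mapsto n\tau^{n-1}\int_{(\tau,\infty)}s^{-n}\,\nu_n(ds)$ on $(0,\infty)$. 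Total variation is controlled by $|\nu_n|((0,\infty))$, as a direct Fubini computation shows.

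Now the same sequence of manipulations carried out with a positive scalar $z$ in place of $A$---which is legitimate because the scalar analogue of the key identity follows from the identical differentiation argument applied to $u\mapsto u^n(1+suz)^{-n}$---yields
\begin{equation*}
f(z)=\int_0^\infty(1+z\tau)^{-(n+1)}\,\tilde\nu(d\tau),\qquad z>0.
\end{equation*}
So $\tilde\nu$ is a representing measure for $f$ as an element of $\tilde{\mathcal S}_{n+1,b}$; by the uniqueness of the representing measure at a given order (cited from \cite{Karp}), $\tilde\nu=\nu_{n+1}$, and hence $f_n(A)=f_{n+1}(A)$, completing the induction. The main technical obstacle is a careful accounting of the two Fubini interchanges together with the separate treatment of a possible atom of $\nu_n$ at $s=0$, so that the density formula never involves the singular factor $1/0^n$; no deeper difficulty appears.
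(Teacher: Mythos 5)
Your proof is correct and follows essentially the same route as the paper's: both rest on the identity $(1+zs)^{-n}=n\int_0^1 u^{n-1}(1+zsu)^{-(n+1)}\,du$ (the paper writes it as $n\int_0^s t^{n-1}(1+zt)^{-(n+1)}\,dt = s^n(1+zs)^{-n}$), followed by Fubini and the uniqueness of the representing measure to identify $\nu_{n+1}(d\tau)=n\tau^{n-1}\bigl(\int_\tau^\infty s^{-n}\nu_n(ds)\bigr)d\tau$ plus the atom at $0$. The only cosmetic differences are that you derive the identity by differentiating $u\mapsto u^n(1+suA)^{-n}$ rather than by direct integration, and you run the operator-level computation from $f_n(A)$ toward $f_{n+1}(A)$ whereas the paper runs it in the reverse direction.
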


\begin{proof}
It is enough to consider the case $m=n+1$.
We consider the measures $\nu_n$ and $\nu_{n+1}$ on $[0,\infty)$ in a similar way to the proof of \cite[Theorem 3]{Karp}.  Note first that $\nu_n(\{0\}) = \lim_{z\to\infty} f(z) = \nu_{n+1}(\{0\}) =: a$.   Replacing $f(z)$ by $f(z)-a$, we may assume that $a=0$.

For $s>0$,
\begin{equation*}
n\int_0^s \frac{t^{n-1}\,dt}{(1+zt)^{n+1}}=
n\int_{1/s}^\infty  \frac{d\tau}{(\tau+z)^{n+1}}
=\frac{s^n}{(1+zs)^n}.
\end{equation*}
Hence, 
\begin{eqnarray*}
f(z) = \int_{0+}^\infty \frac{\nu_n(ds)}{(1+sz)^n}
&=& \int_{0+}^\infty n \int_0^s \frac{t^{n-1}\,dt}{(1+zt)^{n+1}}\frac{\nu_n(ds)}{s^n} \\
&=& \int_{0}^\infty nt^{n-1}
\int_t^\infty \frac{\nu_n(ds)}{s^n}  \frac{dt}{(1+zt)^{n+1}}.
\end{eqnarray*}
The application of Fubini's theorem above is justified by a similar calculation with $\nu_n$ replaced by $|\nu_n|$.  On letting $z\to0+$, this also shows that 
\begin{equation*}
\int_{0}^\infty \left|n t^{n-1}\left(\int_t^\infty \frac{\nu_n(ds)}{s^n}\right) \right| \,dt < \infty.
\end{equation*}
By the uniqueness of the Stieltjes representation, it follows that
\[
\nu_{n+1}(dt) = nt^{n-1}
\int_t^\infty \frac{\nu_n(ds)}{s^n} \,dt.
\]

By (\ref{Asfa}) we have, for any $x\in X$,
\begin{align*}
f_{n+1}(A)x &= n\int_{0+}^\infty t^{n-1}(1+tA)^{-(n+1)}x\,
\left(\int_t^\infty s^{-n}\nu_n(ds)\right)\, dt  \\
&= n\int_{0+}^\infty  \left(\int_0^s t^{n-1} (1+tA)^{-(n+1)}x\,dt\right)
\, s^{-n}\nu_n(ds) \\
&= n\int_{0+}^\infty  \left(\int_{1/s}^\infty (\tau+A)^{-(n+1)}x\,d\tau\right)
\, s^{-n}\nu_n(ds) \\
&= \int_{0+}^\infty  (1/s+A)^{-n}x\,s^{-n}\nu_n(ds)  \\
&= \int_{0+}^\infty  (1+sA)^{-n}x\,\nu_n(ds)=f_n(A)x.
\qedhere
\end{align*}
\end{proof}

\begin{lemma}\label{FcHc}
Let $f\in \tilde{\mathcal{S}}_{n,b}$, and let $A$ be an injective sectorial  operator on  $X$.  Let $f(A)$ be the bounded operator defined by \eqref{Asfa}.  Then $f$ is regularisable in the holomorphic functional calculus for $A$, and
the operator $f_{hol}(A)$ defined by that calculus satisfies
\begin{equation}\label{phillips0}
f_{hol}(A)= f(A).
\end{equation}
\end{lemma}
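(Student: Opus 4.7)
The plan is first to establish that $f$ extends holomorphically to a suitable sector and fits into the holomorphic calculus framework, then to identify the two definitions of $f(A)$ by a Fubini computation on the Dunford integral. For any $\phi\in(0,\pi)$ there is a constant $c_\phi>0$ such that $|1+zs|\ge c_\phi(1+|z|s)$ for all $z\in S_\phi$ and $s\ge0$. Applying this inside the representation (\ref{GlF}) shows that $f$ extends to a holomorphic function on $S_\phi$ with the uniform bound
\[
|f(z)|\le c_\phi^{-n}\,|\nu|([0,\infty)),\qquad z\in S_\phi.
\]
In particular $f\in\mathcal{B}(S_\phi)$ for any $\phi\in(\omega,\pi)$, and since $f$ is bounded, $\tau f\in H_0^\infty(S_\phi)$, so $\tau$ is a regulariser for $f$ and
\[
f_{hol}(A)=[\tau(A)]^{-1}(\tau f)(A).
\]

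Next I would evaluate $(\tau f)(A)$ via the Cauchy integral (\ref{Cauchy}) on the contour $\Gamma=\partial S_{\omega_0}$ (with $\omega<\omega_0<\phi$), substitute the Stieltjes representation of $f$, and interchange the order of integration:
\[
(\tau f)(A)x=\frac{1}{2\pi i}\int_0^\infty \nu(ds)\int_\Gamma \frac{z}{(1+z)^2(1+zs)^n}(z-A)^{-1}x\,dz.
\]
Fubini's theorem applies because on $\Gamma$ the quantity $|z/(1+z)^2|$ is bounded by a fixed function of $|z|$ that decays like $|z|^{-1}$ at infinity and vanishes like $|z|$ at $0$, while $\|(z-A)^{-1}\|\le M(A,\omega_0)/|z|$ and $|(1+zs)^{-n}|\le c_\phi^{-n}(1+|z|s)^{-n}$, so the integrand is dominated by an $L^1$ function against $|dz|\otimes|\nu|(ds)$. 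For each fixed $s>0$ the inner integral equals $g_s(A)$ where $g_s(z)=z/[(1+z)^2(1+zs)^n]\in H_0^\infty(S_\phi)$, and by the multiplicativity of the primary calculus on $H_0^\infty(S_\phi)$ we have $g_s(A)=\tau(A)(1+sA)^{-n}$.

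Combining these ingredients yields
\[
(\tau f)(A)x=\int_0^\infty \tau(A)(1+sA)^{-n}x\,\nu(ds)=\tau(A)\int_0^\infty(1+sA)^{-n}x\,\nu(ds)=\tau(A)f(A)x,
\]
where the second equality uses boundedness of $\tau(A)$ and the norm-convergence of (\ref{Asfa}), and the final $f(A)$ is the one defined by the Stieltjes formula. Since $A$ is injective, $\tau(A)=A(1+A)^{-2}$ is injective, so applying $[\tau(A)]^{-1}$ gives $f_{hol}(A)=f(A)$.

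The main obstacle is the bookkeeping in step two: one needs a uniform dominating function for the double integral that exploits the decay of $\tau$ at both $0$ and $\infty$ on $\Gamma$ together with the uniform estimate on the resolvent and on $(1+zs)^{-n}$. Once Fubini is justified, the identification of the inner Dunford integral with $\tau(A)(1+sA)^{-n}$ is routine (either as a consequence of the homomorphism property on $H_0^\infty(S_\phi)$ or by a direct residue computation using partial fractions), and the rest of the argument is immediate.
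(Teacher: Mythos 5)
Your proposal is correct and follows essentially the same route as the paper: regularise by $\tau(z)=z(1+z)^{-2}$, substitute the Stieltjes representation into the Dunford integral for $(\tau f)(A)$, interchange the integrals by Fubini, identify the inner integral with $\tau(A)(1+sA)^{-n}$, pull out $\tau(A)$, and invert using injectivity of $A$. The only difference is that you spell out the preliminary sector estimate $|1+zs|\ge c_\phi(1+|z|s)$ and the dominating function for Fubini, details the paper leaves implicit.
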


\begin{proof}
Let $f\in \tilde{\mathcal{S}}_{n,b}$ with  the representation \eqref{GlF} and let $A \in \Sect(\omega)$ where $\omega<\pi$. Then $f$ is a bounded holomorphic function in the sector $S_\omega$, so $f$ is regularisable by
\[
\tau (z)=\frac{z}{(1+z)^{2}}.
\]
For any $x\in X$, by  (\ref{Cauchy}), (\ref{GlF}) and
the boundedness of $\tau(A)$, we have
\begin{eqnarray*}
(f \tau)(A)x &=& \frac{1}{2\pi i}\int_\Gamma f(z)\tau (z) (z-A)^{-1}x\,dz \\
&=& \frac{1}{2\pi i}\int_\Gamma \left\{\int_0^\infty \frac{\nu(ds)}{(1+zs)^n}\right\}\,\tau(z) (z-A)^{-1}x\,dz \\
&=& \int_0^\infty \left\{\frac{1}{2\pi i}\int_\Gamma  \frac{\tau (z)}{(1+zs)^n}\, (z-A)^{-1}x\,dz\right\}\,\nu(ds) \\
&=& \int_0^\infty \tau(A) (1+sA)^{-n}x\,\nu(ds)  \\
&=& \tau(A)\int_0^\infty  (1+sA)^{-n} x\,\nu(ds).
\end{eqnarray*}
It follows from this and the definition of $f_{hol}(A)$ via the holomorphic functional calculus that $\dom(f(A))=X$ and the equation (\ref{phillips0}) holds.
\end{proof}

We wish to show that the map $S : \tilde S_b \to \mathcal{L}(X)$ defined in (\ref{Asfa}) is an algebra homomorphism.  If $A$ is injective, this follows immediately from \eqref{zv} and Lemma \ref{FcHc}.  When $A$ is not injective we use the following approximation result.

\begin{proposition}\label{ttn01}
Let $f\in \tilde{\mathcal{S}}_{n,b}$. Then
\begin{equation*}
\lim_{\delta\to 0+}\|f(A+\delta)-f(A)\|=0.
\end{equation*}
\end{proposition}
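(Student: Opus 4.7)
The plan is to prove the convergence by dominated convergence applied directly to the Stieltjes integral representation. Writing $f$ in the form \eqref{GlF} with $\nu$ a finite complex Radon measure on $[0,\infty)$, and noting that since $A+\delta$ is sectorial (by property (b) in the Preliminaries) the same measure represents $f$ and defines $f(A+\delta)$ through \eqref{Asfa}, we have
\[
f(A+\delta) - f(A) = \int_0^\infty \bigl[(1+s(A+\delta))^{-n} - (1+sA)^{-n}\bigr]\,\nu(ds),
\]
so it suffices to bound the operator-norm integrand and pass to the limit inside the integral.

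For the pointwise convergence at each fixed $s\ge 0$, I would use the standard resolvent identity to write
\[
(1+s(A+\delta))^{-1} - (1+sA)^{-1} = -s\delta\,(1+s(A+\delta))^{-1}(1+sA)^{-1},
\]
and estimate the right-hand side using \eqref{sA02} to obtain a bound $s\delta M(A)^2/(1+\delta s)$, which tends to $0$ as $\delta\to0+$ for every fixed $s$. Then I would handle the $n$-th powers via the algebraic telescoping identity
\[
R_\delta(s)^n - R(s)^n = \sum_{k=0}^{n-1} R_\delta(s)^k\,\bigl(R_\delta(s)-R(s)\bigr)\,R(s)^{n-1-k},
\]
where $R_\delta(s)=(1+s(A+\delta))^{-1}$ and $R(s)=(1+sA)^{-1}$. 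Combining with the uniform estimates $\|R_\delta(s)\|\le M(A)$ and $\|R(s)\|\le M(A)$ from \eqref{sA02} and \eqref{stiloper}, this yields
\[
\bigl\|(1+s(A+\delta))^{-n} - (1+sA)^{-n}\bigr\|\le n M(A)^{n+1}\,\frac{s\delta}{1+\delta s},
\]
which converges to $0$ pointwise in $s\ge 0$.

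For the dominating bound, the same estimates give the uniform majorant $2 M(A)^n$, which is $|\nu|$-integrable because $|\nu|([0,\infty))<\infty$. The Lebesgue dominated convergence theorem applied in the operator-norm then delivers the desired conclusion. The argument is essentially routine; the only mildly delicate point is the telescoping step to reduce the $n$-th power difference to the first resolvent difference while keeping control uniformly in $s$, but the $1/(1+\delta s)$ gain from \eqref{sA02} is not even needed for this beyond ensuring the uniform bound $M(A)^n$.
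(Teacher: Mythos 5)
Your proof is correct and follows essentially the same route as the paper: the paper's identity $F_n(s,\delta)(A)=\delta s\sum_{k=0}^{n-1}(1+sA)^{-(k+1)}(1+\delta s+sA)^{-(n-k)}$ is exactly your telescoping step combined with the resolvent identity, and it leads to the identical bound $nM(A)^{n+1}\delta s/(1+\delta s)$ followed by dominated convergence against the finite measure $|\nu|$.
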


\begin{proof}
Let $f$ have the representation (\ref{compbern1C}), and take $\delta \in (0,1)$ and $x\in X$. We have
\begin{equation}\label{EA0010}
f(A)x-f(A+\delta)x=
\int_0^\infty
F_n(s,\delta)(A)x\,\nu(ds),
\end{equation}
where
\begin{eqnarray*}
F_n(s,\delta)(A)&:=& (1+sA)^{-n}-(1+\delta s+sA)^{-n} \\
&=& \delta s \sum_{k=0}^{n-1} (1+sA)^{-(k+1)}(1+s\delta +sA)^{-(n-k)}.
\end{eqnarray*}
Then, using (\ref{stiloper}) and (\ref{sA02}), we have
\begin{equation*}\label{EA101}
\|F_n(s,\delta)(A)\|\le \delta s \sum_{k=0}^{n-1} M(A)^{n+1} \frac{1}{(1 + \delta s)^{n-k}} \le 
 \frac{M(A)^{n+1}n \delta s}{1+\delta s}.
\end{equation*}
Then we obtain from (\ref{EA0010}) that
\begin{equation*}
\|f(A+\delta)-f(A)\|
\le M(A)^{n+1}n \int_0^\infty \frac{\delta s\,|\nu|(ds)}{1+\delta s} \to 0
\end{equation*}
as $\delta\to0+$, by Lebesgue's dominated  convergence theorem since $|\nu|$ is a finite measure.
\end{proof}

\begin{proposition}\label{homAlg}
For any sectorial operator $A$, the map
\[
S: \tilde{\mathcal{S}}_b \ni f \mapsto f(A)\in \mathcal{L}(X),
\]
defined by \eqref{Asfa}, is an algebra homomorphism.
\end{proposition}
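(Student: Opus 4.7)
The plan is to verify that $S$ respects addition, scalar multiplication, and products. Linearity is the easier half: given $f_1 \in \tilde{\mathcal{S}}_{n_1,b}$ and $f_2 \in \tilde{\mathcal{S}}_{n_2,b}$, Proposition \ref{ProPP} lets me represent both in $\tilde{\mathcal{S}}_{n,b}$ with $n=\max(n_1,n_2)$ via uniquely determined measures; linearity of the integral in \eqref{Asfa} then yields $S(\alpha f_1+\beta f_2)=\alpha S(f_1)+\beta S(f_2)$. So the real content is showing $S(fg)=S(f)S(g)$ for $f\in\tilde{\mathcal{S}}_{n,b}$, $g\in\tilde{\mathcal{S}}_{m,b}$, noting that $fg\in\tilde{\mathcal{S}}_{n+m,b}\subset\tilde{\mathcal{S}}_b$ by Proposition \ref{Sbound}.

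My strategy is to prove multiplicativity first for injective $A$, and then transfer the identity to arbitrary sectorial $A$ by an approximation argument. In the injective case, Lemma \ref{FcHc} identifies the bounded operators $f(A)$, $g(A)$, $(fg)(A)$ defined by \eqref{Asfa} with their counterparts $f_{\us{hol}}(A)$, $g_{\us{hol}}(A)$, $(fg)_{\us{hol}}(A)$ in the extended holomorphic calculus. Since each of these functions lies in $H(A)$ (being a bounded holomorphic function on $S_\phi$ for some $\phi\in(\omega,\pi)$ whose calculus produces a bounded operator), the elementary product rule \eqref{zv} for the holomorphic calculus gives
\[
f(A)g(A)=f_{\us{hol}}(A)g_{\us{hol}}(A)=(fg)_{\us{hol}}(A)=(fg)(A).
\]

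For a general sectorial $A$, I will use the perturbation $A+\delta$ with $\delta>0$, which is sectorial and invertible (in particular injective) by property (b) of sectorial operators. The previous paragraph applies to $A+\delta$, giving
\[
(fg)(A+\delta)=f(A+\delta)\,g(A+\delta).
\]
Applying Proposition \ref{ttn01} separately to $f$, $g$, and $fg$, the two sides converge in operator norm as $\delta\to 0+$: the left-hand side tends to $(fg)(A)$; for the right-hand side I combine $f(A+\delta)\to f(A)$ and $g(A+\delta)\to g(A)$ with the uniform bound $\|g(A+\delta)\|\le M(A+\delta)^m\|\nu_g\|\le M(A)^m\|\nu_g\|$ (the estimate $M(A+\delta)\le M(A)$ is a direct consequence of \eqref{sA02}), giving norm convergence of the product to $f(A)g(A)$.

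The main technical obstacle I anticipate is not a deep conceptual one but a bookkeeping concern: one must choose a single order $n$ large enough that all three of $f$, $g$, and $fg$ have representations in $\tilde{\mathcal{S}}_{n,b}$ simultaneously (so that Proposition \ref{ttn01} is applied uniformly and the norm-convergence rates are controlled), and one must keep the uniform bound on $M(A+\delta)$ available to pass to the limit in the product. Both issues are handled by Propositions \ref{Sbound} and \ref{ProPP} together with the sectorial estimate \eqref{sA02}, so no extra ingredient is needed beyond results already established in the excerpt.
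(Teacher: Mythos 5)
Your proof is correct and follows essentially the same route as the paper: reduce to the injective case via Lemma \ref{FcHc} and the elementary product rule \eqref{zv} of the holomorphic calculus, then handle general sectorial $A$ by the perturbation $A+\delta$ and the norm convergence from Proposition \ref{ttn01}. The extra remarks on uniform bounds for $M(A+\delta)$ merely make explicit what the paper leaves implicit.
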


\begin{proof}
Let $f\in \tilde{\mathcal{S}}_{n,b}$ and $g\in \tilde{\mathcal{S}}_{m,b}$, so that $h := f g \in \tilde{\mathcal{S}}_{n+m,b}$.  We need to prove that
\begin{equation}\label{HolHir}
h(A)=f(A)g(A).
\end{equation}

Assume first that $A$ is injective.  Then, by Lemma  \ref{FcHc}
we have the equalities
\begin{equation*}
f(A)=f_{hol}(A),\quad g(A)= g_{hol}(A),\quad h(A)=h_{hol}(A).
\end{equation*}
So (\ref{HolHir}) follows from (\ref{zv}).

Now consider the case when $A$ is not injective. Then, for any $\delta>0$, we can apply the previous case to $A+\delta$ and obtain
\[
h(A+\delta)=f(A+\delta)g(A+\delta).
\]
On letting $\delta\to 0+$ the statement (\ref{HolHir}) follows
 by applying Proposition \ref{ttn01} to the functions
$f\in \tilde{\mathcal{S}}_{n,b}$, $g\in \tilde{\mathcal{S}}_{m,b}$
and $h\in \tilde{\mathcal{S}}_{n+m,b}$.
\end{proof}

\subsection{The complete Bernstein algebra}\label{SubAlg}
In order to include Hirsch's definition of functional calculus for complete Bernstein functions, we need an algebra of functions that contains $\mathcal{CBF}$.  We proceed as follows.

For $n\in \N$ we consider the class $\tilde{\mathcal{T}}_n$ of functions of the form
\begin{equation}
f(z)=a+\int_0^\infty \frac{z^n \mu(ds)}{(1+zs)^n},\qquad z\in \C\setminus (-\infty,0],
\label{sfun0n}
\end{equation}
where $a\in \C$ and  $\mu$ is a complex Radon measure on $[0,\infty)$
such that
\begin{equation*}
\int_0^\infty \frac{|\mu|(ds)}{(1+s)^n}<\infty.
\end{equation*}
Putting $\hat f(z) = f(1/z)$ and comparing with (\ref{salpha}), one sees that $f \in \tilde{\mathcal{T}}_n$ if and only if $\hat f \in \tilde{\mathcal{S}}_n$.  Moreover, $z^\beta \in \mathcal{T}_n$ if $0 < \re\beta< n$.

As a corollary of (\ref{Sinclus}) and Lemma \ref{WiddR} we have the following statement.

\begin{proposition}\label{CSt}
The inclusion
\[
\tilde{\mathcal{T}}_n\,\subset\,\tilde{\mathcal{T}}_m,\quad m>n,
\]
holds. If $f_1\in \tilde{\mathcal{T}}_n$, $f_2\in \tilde{\mathcal{T}}_m$ for some $n,m\in \N$, then
\[
f_1 f_2 \in \tilde{\mathcal{T}}_{n+m}.
\]
\end{proposition}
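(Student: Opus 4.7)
The plan is to reduce both parts to the analogous statements for the generalised Stieltjes spans $\tilde{\mathcal{S}}_n$ by means of the involution
\[
f\mapsto\hat{f},\qquad \hat{f}(z):=f(1/z),\quad z\in\C\setminus(-\infty,0].
\]
As noted in the paragraph preceding the proposition, substituting $1/z$ for $z$ in \eqref{sfun0n} turns the representation of $f\in\tilde{\mathcal{T}}_n$ with data $(a,\mu)$ into the representation \eqref{salpha} of an element of $\tilde{\mathcal{S}}_n$ with the same data $(a,\mu)$; the integrability condition $\int_0^\infty(1+s)^{-n}\,d|\mu|<\infty$ is identical in both definitions. Hence $f\mapsto\hat{f}$ is a bijection $\tilde{\mathcal{T}}_n\to\tilde{\mathcal{S}}_n$, and it is clearly an algebra homomorphism since $\widehat{fg}(z)=(fg)(1/z)=\hat{f}(z)\hat{g}(z)$.

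For the inclusion, I would invoke \eqref{Sinclus}: $\mathcal{S}_n\subset\mathcal{S}_m$ for $n<m$ passes to the complex linear spans to give $\tilde{\mathcal{S}}_n\subset\tilde{\mathcal{S}}_m$, and applying the involution then yields $\tilde{\mathcal{T}}_n\subset\tilde{\mathcal{T}}_m$.

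For the product rule, I would first upgrade Lemma \ref{WiddR} from the convex cones $\mathcal{S}_n$, $\mathcal{S}_m$ to their complex linear spans by bilinearity of multiplication: writing $u=\sum_k c_k g_k\in\tilde{\mathcal{S}}_n$ with $g_k\in\mathcal{S}_n$, and $v=\sum_l d_l h_l\in\tilde{\mathcal{S}}_m$ with $h_l\in\mathcal{S}_m$, one has
\[
uv=\sum_{k,l}c_k d_l\,g_k h_l\in\tilde{\mathcal{S}}_{n+m},
\]
since each $g_k h_l\in\mathcal{S}_{n+m}$ by Lemma \ref{WiddR}. Applying this with $u=\hat{f}_1$, $v=\hat{f}_2$ and using $\widehat{f_1 f_2}=\hat{f}_1\hat{f}_2$ gives $f_1 f_2\in\tilde{\mathcal{T}}_{n+m}$.

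I do not anticipate any real obstacle here; the only point that requires a moment of care is verifying that the change of variable $z\mapsto 1/z$ matches the parameters $(a,\mu)$ of the two representations exactly and preserves the integrability condition, but this is an immediate substitution. Everything else is formal manipulation with linear spans.
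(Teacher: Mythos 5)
Your proof is correct and follows exactly the route the paper intends: the paper states Proposition \ref{CSt} as a corollary of \eqref{Sinclus} and Lemma \ref{WiddR} via the correspondence $f\in\tilde{\mathcal{T}}_n\Leftrightarrow\hat{f}\in\tilde{\mathcal{S}}_n$, and your argument simply spells out the transfer under $z\mapsto 1/z$ and the bilinear extension from the cones to their complex spans. Nothing further is needed.
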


Thus the set 
\[
\tilde{\mathcal{T}}:=\bigcup_{n\in\N}\,\tilde{\mathcal{T}}_n
\]
 is an algebra.  Note that the class $\tilde{\mathcal{T}}_1$ coincides with the class $\mathcal{T}$ introduced in \cite{Mart} and discussed in the introduction of this paper. The class $\mathcal{CBF}$ of complete Bernstein functions corresponds to (\ref{sfun0n}) when $n=1$, $a\ge0$ and $\mu$ is a positive measure.  So we may call $\tilde{\mathcal{T}}$ the {\it complete Bernstein algebra}.

Let $A$ be a sectorial operator on a Banach
space $X$. By Proposition \ref{homAlg} there is a functional calculus $(\mathcal{E},\mathcal{F},\Phi)$
with the algebra $\mathcal{F}=\mathcal{O}(S_\pi)$ (the space of all
holomorphic functions on the cut plane $z\in \C\setminus
(-\infty,0]$), the subalgebra $ \mathcal{E}=\tilde{\mathcal{S}}_b$,
and the algebra  homomorphism $\Phi=S$, defined by (\ref{Asfa}).
Because the function $\mathbf i(z)\equiv 1$ belongs to $\tilde{\mathcal{S}}_b$ and $\Phi(\mathbf i)=1$, this functional calculus is proper and we can consider the extended calculus.  The following lemma shows that this is a functional calculus {\em for} $A$, and we shall call it the (extended) {\em Stieltjes calculus} for $A$.

\begin{lemma}\label{Aregul0}
Let $A$ be a sectorial operator on the Banach
space $X$. Then any  $f\in \tilde{\mathcal{T}}$ is regularisable in the
extended Stieltjes calculus. More precisely, if $f\in
\tilde{\mathcal{T}}_n$, $n\in \N$, then the function
\[
\psi_n(z):=\frac{1}{(1+z)^n}\in \tilde{\mathcal{S}}_{n,b}
\]
is a regulariser for $f$, such that
\[
\psi_n f\in\tilde{\mathcal{S}}_{2n,b}.
\]
In particular, $\iota$ is regularisable and $\Phi(\iota) = A$.
\end{lemma}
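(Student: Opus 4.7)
The plan is to verify four claims in sequence: $\psi_n \in \tilde{\mathcal{S}}_{n,b}$, $\Phi(\psi_n)$ is injective, $\psi_n f \in \tilde{\mathcal{S}}_{2n,b}$, and finally that $\iota$ is regularisable with $\Phi(\iota) = A$. The first two are straightforward: the representation $\psi_n(z) = \int_0^\infty \delta_1(ds)/(1+zs)^n$ places $\psi_n$ in $\tilde{\mathcal{S}}_{n,b}$, and by definition~\eqref{Asfa} one then has $\Phi(\psi_n) = (1+A)^{-n}$, which is injective because sectoriality gives $-1 \notin \sigma(A)$. The last claim also falls out once the third is known, since $\iota$ admits the representation $a=0,\mu=\delta_0$ placing it in $\tilde{\mathcal{T}}_1$, so $\psi_1\iota = z/(1+z) = 1 - \psi_1 \in \tilde{\mathcal{S}}_{2,b}$ regularises $\iota$, and $\Phi(\iota) = \Phi(\psi_1)^{-1}\Phi(\psi_1\iota) = (1+A)(1-(1+A)^{-1})$ is seen to unwind on $\dom(A)$ to $A$.

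The substantive point is therefore the third claim. Writing $f(z) = a + \int_0^\infty z^n(1+zs)^{-n}\mu(ds)$ with $\int(1+s)^{-n}|\mu|(ds) < \infty$, the constant-term contribution $a\psi_n$ already lies in $\tilde{\mathcal{S}}_{n,b}\subset\tilde{\mathcal{S}}_{2n,b}$, so it suffices to show that the integral term belongs to $\tilde{\mathcal{S}}_{2n,b}$. The strategy is to produce, for each $s\ge 0$, a complex Radon measure $\alpha_s$ on $[0,\infty)$ satisfying
\[
\frac{z^n}{(1+z)^n(1+zs)^n} = \int_0^\infty \frac{\alpha_s(dt)}{(1+zt)^{2n}}, \qquad z \in \C \setminus (-\infty,0],
\]
together with a uniform bound $|\alpha_s|([0,\infty)) \le C_n(1+s)^{-n}$. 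Granted this, Fubini's theorem applied to the joint measure $\alpha_s(dt)\mu(ds)$ produces a complex Radon measure $\nu$ on $[0,\infty)$ of total variation at most $C_n\int(1+s)^{-n}|\mu|(ds) < \infty$, in terms of which the integral term is represented as $\int_0^\infty (1+zt)^{-2n}\,\nu(dt)$, placing it in $\tilde{\mathcal{S}}_{2n,b}$.

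The construction of $\alpha_s$ with the stated bound is the principal obstacle. I would factor $z^n[(1+z)(1+zs)]^{-n}$ as the $n$-fold power of $z/((1+z)(1+zs))$ and first represent this base factor in $\tilde{\mathcal{S}}_{2,b}$ via the identity
\[
\frac{z}{(1+z)(1+zs)} = \frac{1}{1+zs} - \frac{1}{(1+z)(1+zs)},
\]
using the explicit lifts $(1+zs)^{-1} = \int_0^s s^{-1}(1+zt)^{-2}\,dt$ and $((1+z)(1+zs))^{-1} = |s-1|^{-1}\int_{\min(1,s)}^{\max(1,s)}(1+zt)^{-2}\,dt$ (with the obvious modifications at $s=0$ and $s=1$) to obtain an explicit density $\beta_s$ for the base factor. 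A direct computation shows that, although each summand has total variation blowing up as $s\to 1$, the cancellation between them keeps the total variation of the difference bounded by $4/(1+s)$ uniformly in $s\ge 0$; this is the heart of the argument. Raising to the $n$-th power and using the submultiplicativity of total variation under the Widder product $\tilde{\mathcal{S}}_{n,b}\cdot\tilde{\mathcal{S}}_{m,b} \subset \tilde{\mathcal{S}}_{n+m,b}$ of Proposition~\ref{Sbound} — itself a quick consequence of Hahn decomposition of the factor measures, Widder's theorem applied to each positive piece, and the identity relating the mass of a positive representing measure to the value of the associated Stieltjes function at $z=0$ — then yields $|\alpha_s|([0,\infty)) \le (4/(1+s))^n$, as required.
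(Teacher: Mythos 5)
Your proof is correct, but the main step is handled quite differently from the paper. For $\psi_n f\in\tilde{\mathcal{S}}_{2n,b}$ you build, for each $s$, an explicit kernel representing $z^n\bigl[(1+z)(1+zs)\bigr]^{-n}$ in $\tilde{\mathcal{S}}_{2n,b}$ with the uniform total-variation bound $C_n(1+s)^{-n}$ (the cancellation near $s=1$ being the delicate point), and then integrate against $\mu$ via Fubini. The paper instead splits the representing measure $\mu$ at $s=1$: on $[0,1]$ it writes the contribution as $z^n f_0(z)$ with $f_0\in\tilde{\mathcal{S}}_{n,b}$ (using $|\mu|([0,1])<\infty$) and multiplies by $z^n\psi_n=(1-(1+z)^{-1})^n\in\tilde{\mathcal{S}}_{n,b}$; on $(1,\infty)$ it expands $(zs)^n(1+zs)^{-n}=(1-(1+zs)^{-1})^n$ binomially, each term lying in $\tilde{\mathcal{S}}_{k,b}\subset\tilde{\mathcal{S}}_{n,b}$ because $\int_{1+}^\infty s^{-n}|\mu|(ds)<\infty$; in both cases Proposition \ref{Sbound} then gives membership in $\tilde{\mathcal{S}}_{2n,b}$ after multiplying by $\psi_n$. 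The paper's route is shorter and avoids both the explicit kernel and the cancellation estimate; yours is more computational but yields a quantitative bonus, namely an explicit representing measure for $\psi_n f$ with total variation controlled by $C_n\int_0^\infty(1+s)^{-n}|\mu|(ds)$. Two small points to tidy: your appeal to "submultiplicativity of total variation" under the Widder product holds only up to a harmless constant when the factor measures are complex (Jordan decomposition of real and imaginary parts doubles the mass), and the measurability of $s\mapsto\alpha_s(E)$ needed for Fubini should be noted, though it is clear from your explicit densities. Neither affects the validity of the argument.
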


\begin{proof}
First, $\psi_n(z) \in {\mathcal{S}}_{n,b}$,
with the representing measure $\nu$ equal to the unit mass at $1$.  Hence
\[
\Phi(\psi_n)=(1+A)^{-n},
\]
which is injective. Moreover,
\begin{eqnarray}\label{eqbn}
z^n \psi_n(z)&=&\left(1-\frac{1}{1+z}\right)^n\in \tilde{\mathcal{S}}_{n,b},  \\
\Phi(z^n \psi_n(z)) &=& (1 - (1+A)^{-1})^n = (A(1+A)^{-1})^n. \nonumber
\end{eqnarray}
So $z^n$ is regularisable, and $\Phi(z^n) = A^n$.

Let $f\in \tilde{\mathcal{T}}_n$ with the representation (\ref{sfun0n}).  Then
$
f(z)= a+ z^nf_0(z)+f_1(z),
$
where
\[
f_0(z):=\int_0^{1}\frac{\mu(ds)}{(1+zs)^n} \in \tilde{\mathcal{S}}_{n,b},\quad \mbox{and}\quad
f_1(z):=\int_{1+}^\infty\frac{z^n\,\mu(ds)}{(1+zs)^n}.
\]
By (\ref{eqbn}) the function
$\psi_n(z)z^n f_0(z)\in \tilde{\mathcal{S}}_{2n,b}$.
So, it suffices to prove that $\psi_n f_1\in \tilde{\mathcal{S}}_{2n,b}$.
We have
\[
f_1(z)=\int_1^\infty \left(1-\frac{1}{1+zs}\right)^n\,\frac{\mu(ds)}{s^n}
=\sum_{k=0}^n (-1)^k \begin{pmatrix} n\\k \end{pmatrix} f_{1,k}(z),
\]
where
\[
f_{1,k}(z):=\int_1^\infty \frac{\mu(ds)}{(1+zs)^k s^n}
\in \tilde{\mathcal{S}}_{k,b}\subset \tilde{\mathcal{S}}_{n,b},\quad 0\le k\le n.
\]
So
$\psi_n f_1\in \tilde{\mathcal{S}}_{2n,b}$.
\end{proof}

Now let $A$ be a sectorial operator on $X$ and $f\in \tilde{\mathcal{T}}_n$.  By Lemma \ref{Aregul0} we can define
the closed operator $f(A) = f_S(A)$ by the extended Stieltjes calculus as:
\begin{equation} \label{fAesc}
f(A)=[\psi_n(A)]^{-1}[f \psi_n](A) = (I+A)^n [f \psi_n](A),
\end{equation}
Here $(f\psi_n)(A)$ is defined  by (\ref{Asfa}) for $f \psi_n\in\tilde{\mathcal{S}}_{2n,b}$. 

\begin{lemma} \label{core}
If $f \in \tilde{\mathcal{T}}_n$, then  $\dom (A^n)$ is a core for $f(A)$.
\end{lemma}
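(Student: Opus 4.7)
The plan is to apply Proposition \ref{GenerC}(a) with $\tilde X = \dom(A^n)$ and an approximating sequence built from resolvents of $A$.

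First I would introduce the functions
\[
e_k(z) := \frac{1}{(1+z/k)^n}, \qquad k \in \N,
\]
which lie in $\tilde{\mathcal{S}}_{n,b}$ (their Stieltjes representing measure \eqref{compbern1C} is a unit point mass at $s = 1/k$). By formula \eqref{Asfa},
\[
\Phi(e_k) = (1 + A/k)^{-n} = k^n(k+A)^{-n},
\]
which maps $X$ bijectively onto $\dom(A^n)$, so $\ran(\Phi(e_k)) = \dom(A^n) = \tilde X$. Property \eqref{sot} (with $s = 1/k$) shows that $\Phi(e_k) \to 1$ strongly as $k\to\infty$.

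Next I would verify the inclusion $\tilde X \subset \dom(\Phi(f))$. By definition \eqref{fAesc}, $\dom(f(A)) = \{x \in X : (f\psi_n)(A)x \in \dom((1+A)^n)\} = \{x : (f\psi_n)(A)x \in \dom(A^n)\}$. Given $x \in \dom(A^n)$, write $x = (1+A)^{-n}y$ for some $y \in X$. Since $f\psi_n \in \tilde{\mathcal{S}}_{2n,b}$ and $\psi_n \in \tilde{\mathcal{S}}_{n,b}$ both yield bounded operators in the Stieltjes calculus, Proposition \ref{properF}(\ref{grrr}) gives commutation
\[
(f\psi_n)(A)(1+A)^{-n} = (1+A)^{-n}(f\psi_n)(A).
\]
Hence $(f\psi_n)(A)x = (1+A)^{-n}(f\psi_n)(A)y \in \ran((1+A)^{-n}) = \dom(A^n)$, which shows $x \in \dom(f(A))$.

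With all the hypotheses of Proposition \ref{GenerC}(a) in place, the conclusion is immediate: $\dom(A^n)$ is a core for $f(A)$. The only mildly non-routine step is the verification $\dom(A^n) \subset \dom(f(A))$, which rests on the commutation between $(f\psi_n)(A)$ and $(1+A)^{-n}$ within the bounded Stieltjes calculus.
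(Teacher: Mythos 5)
Your proof is correct and follows essentially the same route as the paper: the same regularising sequence $e_k(z)=(1+z/k)^{-n}$, strong convergence via \eqref{sot}, and an appeal to Proposition \ref{GenerC}(a). The only (harmless) difference is that you verify $\dom(A^n)\subset\dom(f(A))$ directly by commuting $(f\psi_n)(A)$ with $(1+A)^{-n}$, whereas the paper gets it from Proposition \ref{properF}(\ref{errr}) after noting that each $e_k$ is itself a regulariser for $f$.
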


\begin{proof}  Let
\[
e_k(z):=\psi_n(k^{-1}z)\in \tilde{\mathcal{S}}_{n,b},\quad k\in \N.
\]
By a slight variant of Lemma \ref{Aregul0}, $e_k$ is a regulariser for $f$, so
\[
\ran(e_k(A))=\dom (A^n)\subset \dom(f(A)),
\]
by Proposition \ref{properF}(\ref{errr}).  Moreover, $e_k(A) \to 1$ in the strong operator topology (see \eqref{sot}), so the claim follows from  Proposition \ref{GenerC}(a).
\end{proof}

For $f\in \tilde{\mathcal{T}}_n$ one can define $f(A)$ in an alternative way to (\ref{fAesc}), generalising the definition (\ref{stilclos}) from \cite{Mart} for $n=1$, and hence generalising Hirsch's definition \cite{HirFA} when $f$ is a complete Bernstein function.

Let $f \in \tilde{\mathcal{T}}_n$ have the representation (\ref{sfun0n}).  We define the  operator $\hat f(A)$ by:
\begin{equation}
 \dom(A^n) \ni x \,\mapsto\,
\hat{f}(A)x=ax+\int_0^\infty A^n(1+sA)^{-n}x\,\mu(ds).
\label{stilclos0}
\end{equation}

\begin{proposition}\label{closable}
The operator $\hat{f}(A)$ defined by \eqref{stilclos0} is
closable and
\begin{equation} \label{ClosABC0}
\overline{\hat{f}(A)}=(1+A)^{n}\hat{f}(A)(1+A)^{-n}.
\end{equation}
\end{proposition}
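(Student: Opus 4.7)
The plan is to identify $T := (1+A)^n\hat{f}(A)(1+A)^{-n}$ with the closed operator $f(A)$ produced by the extended Stieltjes calculus via \eqref{fAesc}; then \eqref{ClosABC0} will reduce to the closability of $\hat f(A)$ together with the core assertion of Lemma \ref{core}. The central auxiliary identity is
\[
\hat f(A)(1+A)^{-n} = (f\psi_n)(A)
\]
as bounded operators on $X$. Granted this, \eqref{fAesc} yields $T = (1+A)^n (f\psi_n)(A) = f(A)$, which is closed.

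To prove the identity I would use the decomposition $f = a + z^n f_0 + f_1$ from the proof of Lemma \ref{Aregul0}, where $f_0 \in \tilde{\mathcal{S}}_{n,b}$ is represented by $\mu|_{[0,1]}$, and the binomial expansion used there shows that $f_1 \in \tilde{\mathcal{S}}_{n,b}$ satisfies $f_1(A)x = \int_{1}^\infty A^n(1+sA)^{-n}x\,\mu(ds)$ for all $x \in X$. For arbitrary $x \in X$ one has $(1+A)^{-n}x \in \dom(A^n)$, so \eqref{stilclos0} applies; splitting the integral at $s=1$ and commuting the bounded rational-in-$A$ operators $A^n(1+A)^{-n}$, $(1+A)^{-n}$, $(1+sA)^{-n}$ through the integral produces
\[
\hat f(A)(1+A)^{-n}x = a(1+A)^{-n}x + A^n(1+A)^{-n}f_0(A)x + (1+A)^{-n}f_1(A)x.
\]
On the other hand, by Proposition \ref{homAlg} applied to $f\psi_n = a\psi_n + (z^n\psi_n)f_0 + \psi_n f_1 \in \tilde{\mathcal{S}}_b$, together with \eqref{eqbn} which gives $(z^n\psi_n)(A) = A^n(1+A)^{-n}$, the same right-hand side equals $(f\psi_n)(A)x$, proving the identity.

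With $T = f(A)$ in hand, closability of $\hat f(A)$ follows by showing $\hat f(A) \subset T$: for $x \in \dom(A^n)$ the same commutativity of resolvents inside \eqref{stilclos0} yields $\hat f(A)(1+A)^{-n}x = (1+A)^{-n}\hat f(A)x$, whence $Tx = (1+A)^n(1+A)^{-n}\hat f(A)x = \hat f(A)x$. Lemma \ref{core} then states that $\dom(A^n) = \dom(\hat f(A))$ is a core for $f(A) = T$, which gives $\overline{\hat f(A)} = T$, as required. The main technical hurdle is the Bochner-integrability accounting when moving bounded operators across the integrals, since $\mu$ need not be of finite total variation and $\|A^n(1+sA)^{-n}\|$ is not uniformly bounded in $s$: the splitting at $s=1$ resolves this, because $|\mu|$ is finite on $[0,1]$, while on $[1,\infty)$ the integrability built into $\tilde{\mathcal{T}}_n$ combined with $\|A^n(1+sA)^{-n}\| \leq (1+M(A))^n s^{-n}$ makes the remaining integrals absolutely convergent.
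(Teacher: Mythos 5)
Your proof is correct, but it takes a genuinely different route from the paper's. The paper argues directly and more economically: using \eqref{stiloper} and \eqref{sA0} it derives the estimate \eqref{ineq}, namely $\|\hat f(A)x\|\le |a|\,\|x\|+M(A)^n\|A^nx\|\int_0^1|\mu|(ds)+(1+M(A))^n\|x\|\int_{1+}^\infty s^{-n}|\mu|(ds)$ (the same split of the integral at $s=1$ that you use to handle the non-finiteness of $|\mu|$), applies it to $(1+A)^{-n}x$ to see that $(1+A)^{-n}\hat f(A)=\hat f(A)(1+A)^{-n}$ is bounded on $\dom(A^n)$, and then concludes closability and \eqref{ClosABC0} from the boundedness and injectivity of $(1+A)^{-n}$, leaving the equality (as opposed to the inclusion $\overline{\hat f(A)}\subset(1+A)^n\hat f(A)(1+A)^{-n}$) as a standard approximation step. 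You instead identify $\hat f(A)(1+A)^{-n}$ with the concrete bounded operator $(f\psi_n)(A)$ of the Stieltjes calculus, via the decomposition $f=a+z^nf_0+f_1$ from Lemma \ref{Aregul0} and the homomorphism property of Proposition \ref{homAlg}; this gives $(1+A)^n\hat f(A)(1+A)^{-n}=f_S(A)$, which is closed by construction of the extended calculus, and Lemma \ref{core} supplies the core property needed for equality. There is no circularity, since Lemmas \ref{Aregul0} and \ref{core} and Proposition \ref{homAlg} all precede and are independent of Proposition \ref{closable}. Your route costs more machinery but buys more: it establishes Corollary \ref{coins} ($f_S(A)=f_H(A)$ for an arbitrary sectorial $A$) as an immediate byproduct, whereas the paper reaches that only later through Theorem \ref{ConHPH} (which requires injectivity and the holomorphic calculus) combined with the approximation results of Propositions \ref{ttn01} and \ref{ttn}; it also makes explicit the core argument behind the equality in \eqref{ClosABC0}, which the paper compresses into ``it follows that.''
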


\begin{proof}  Using (\ref{stiloper}) and (\ref{sA0}), we have,  for $x\in \dom(A^n)$,
\begin{multline} \label{ineq}
\|\hat{f}(A)x\|\le |a|\|x\|
+M(A)^n\|A^nx\|\int_0^1\,|\mu|(ds)  \\
 + (1+M(A))^n\|x\|\int_{1+}^\infty \,\frac{|\mu|(ds)}{s^n}. \phantom{XXXXX}
\end{multline}
Since
\[
(1+A)^{-n}\hat{f}(A)x=\hat{f}(A)(1+A)^{-n}x,
\]
 (\ref{ineq}) implies that
\begin{equation*}
\|(1+A)^{-n}\hat{f}(A)x\|\le C\|x\|,\qquad x\in \dom(A^n) = \dom(\hat f(A)).
\end{equation*}
Since $(1+A)^{-n}$ is a bounded injective operator, it follows that 
$\hat f(A)$ is closable and \eqref{ClosABC0} holds.
\end{proof}

By Proposition \ref{closable} we can define a closed operator
$f_{H}(A)$  as
\begin{eqnarray}
\label{HAaa}
f_{H}(A) &:=& \overline{\hat{f}(A)} \;\; = \;\; (1+A)^{n}\hat{f}(A)(1+A)^{-n}, \\
\dom(f_H(A)) &=& \{x\in X:\,\hat{f}(A)(1+A)^{-n}x\in \dom (A^n) \nonumber\}.
\end{eqnarray}
We will show in Theorem \ref{ConHPH} that this definition agrees with the definition by the extended Stieltjes calculus in \eqref{fAesc}.  In particular this will show that the definition \eqref{HAaa} is independent of $n$. As in Proposition \ref{homAlg} it is convenient to use an approximation argument to reduce the general case to the case when $A$ is invertible.

\begin{proposition}\label{ttn}
Let $f\in \tilde{\mathcal{T}}_n$. Then, for each $x\in \dom(A^n)$,
\begin{equation}
\lim_{\delta\to 0+}\|f_H(A+\delta)x-f_H(A)x\|=0. 
\label{conver}
\end{equation}
\end{proposition}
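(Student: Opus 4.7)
The plan is to apply Lebesgue's dominated convergence theorem to the integral representation (\ref{stilclos0}). Since $A+\delta$ is sectorial for $\delta>0$ and differs from $A$ by a bounded operator, $\dom((A+\delta)^n)=\dom(A^n)$. Thus for $x\in\dom(A^n)$ both $f_H(A+\delta)x=\hat{f}(A+\delta)x$ and $f_H(A)x=\hat{f}(A)x$ are given by (\ref{stilclos0}), and
\[
f_H(A+\delta)x-f_H(A)x=\int_0^\infty \Phi_\delta(s)x\,\mu(ds),
\]
where $\Phi_\delta(s):=(A+\delta)^n(1+s(A+\delta))^{-n}-A^n(1+sA)^{-n}$. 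The argument then reduces to establishing (i) pointwise convergence $\Phi_\delta(s)x\to0$ as $\delta\to0+$ for each $s>0$, and (ii) a $|\mu|$-integrable bound on $\|\Phi_\delta(s)x\|$ uniform for small $\delta>0$.

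For (ii), I first note that (\ref{sA02}) yields $M(A+\delta)\le M(A)$. Since $(A+\delta)$ commutes with $(1+s(A+\delta))^{-1}$, applying (\ref{sA0}) to $A+\delta$ gives
\[
\|(A+\delta)^n(1+s(A+\delta))^{-n}\|\le \bigl((1+M(A))/s\bigr)^n.
\]
Commuting in the opposite order and using the binomial expansion $(A+\delta)^n x=\sum_{k=0}^n \binom{n}{k}\delta^{n-k}A^k x$ (whose norm is bounded uniformly for $\delta\in(0,1]$ by some $C_x$) yields
\[
\|(A+\delta)^n(1+s(A+\delta))^{-n}x\|\le M(A)^n C_x.
\]
The minimum of these two bounds is dominated by $C/(1+s)^n$ for some $C$ depending only on $A$ and $x$, and $\int_0^\infty(1+s)^{-n}|\mu|(ds)<\infty$ by the integrability condition in the definition of $\tilde{\mathcal{T}}_n$.

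For (i), I split
\[
\Phi_\delta(s)x=(1+s(A+\delta))^{-n}\bigl[(A+\delta)^n-A^n\bigr]x+\bigl[(1+s(A+\delta))^{-n}-(1+sA)^{-n}\bigr]A^n x.
\]
The first summand tends to zero because $\|(1+s(A+\delta))^{-n}\|\le M(A)^n$ while $(A+\delta)^n x\to A^n x$ in norm. For the second, the resolvent identity gives $\|(1+s(A+\delta))^{-1}-(1+sA)^{-1}\|\le s\delta M(A)^2\to0$ for fixed $s$, which extends by a standard telescoping argument to the $n$-th powers. The only technical obstacle is careful bookkeeping of the commutation and norm estimates; once in place, dominated convergence produces \eqref{conver}.
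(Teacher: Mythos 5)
Your proof is correct, and it starts from the same point as the paper's: writing $f_H(A+\delta)x-f_H(A)x=\int_0^\infty G_n(s,\delta)x\,\mu(ds)$ with $G_n(s,\delta)=(A+\delta)^n(1+s(A+\delta))^{-n}-A^n(1+sA)^{-n}$ (the constant $a$ cancels, as you implicitly use), and then producing an $|\mu|$-integrable bound that behaves like a constant for small $s$ and like $s^{-n}$ for large $s$. Where you diverge is in how the convergence itself is extracted. The paper uses the algebraic identity $a^n-b^n=(a-b)\sum_{k}b^k a^{n-1-k}$ with $a=(A+\delta)(1+\delta s+sA)^{-1}$, $b=A(1+sA)^{-1}$, for which $a-b=\delta(1+sA)^{-1}(1+\delta s+sA)^{-1}$; this pulls out an explicit factor of $\delta$ and yields the quantitative bounds $\|G_n(s,\delta)x\|\le C\delta(1+\delta s)^{-1}s^{-(n-1)}\|x\|$ and $\le C_0\delta\|x\|_{\dom(A^{n-1})}$, after which the integral is split at $s=1$. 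You instead separate the two sources of $\delta$-dependence additively, prove pointwise convergence $G_n(s,\delta)x\to0$ for each fixed $s$ via the resolvent identity and telescoping, and invoke dominated convergence with a $\delta$-independent majorant $C(1+s)^{-n}$. Your route is a bit softer and arguably cleaner to check; the paper's factorization buys an explicit rate ($O(\delta)$ on the part of $\mu$ supported in $[0,1]$) and reuses the same identity pattern that appears elsewhere in the paper (e.g.\ Proposition \ref{ttn01}). Both arguments are complete; the minor bookkeeping you defer (commutation of $(A+\delta)^n$ with the resolvents on $\dom(A^n)$, and passing from the resolvent estimate to $n$-th powers) is routine.
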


\begin{proof}
Let $f$ have the representation (\ref{sfun0n}). Then  for $x\in \dom(A^n)$ and $\delta \in (0,1)$ we have
\begin{equation}\label{EA00}
f_H(A+\delta)x-f_H(A)x=
\int_0^\infty G_n(s,\delta)(A)x\,\mu(ds),
\end{equation}
where
\begin{eqnarray}
G_n(s,\delta)(A) &:=& (A+\delta)^n(1+\delta s+sA)^{-n}-A^n(1+sA)^{-n} \nonumber \\
&=& \delta
(1+sA)^{-1}(1+\delta s+sA)^{-1} \label{EA05}\\
\nonumber 
&& \phantom{X} \cdot 
\sum_{k=0}^{n-1} [A(1+sA)^{-1}]^k [(A+\delta)(1+s\delta +sA)^{-1}]^{(n-1-k)}.
\end{eqnarray}
Then, using (\ref{stiloper}), (\ref{sA0}), (\ref{sA02}) for $A$ and $A+\delta$, we have
\begin{eqnarray}
\|G_n(s,\delta)(A)x\| &\le& \delta \frac{M(A)^2}{1+\delta s}\sum_{k=0}^{n-1} \frac{(1+M(A))^k}{s^k}
\frac{(1+M(A))^{n-1-k}}{s^{n-1-k}} \|x\| \nonumber \\
\label{EA10}
&=& \frac{C\delta}{(1+\delta s)s^{n-1}}\|x\|,
\end{eqnarray}
where $C=M(A)^2 (1+M(A))^{n-1}n$. Moreover, (\ref{EA05}) gives
\begin{eqnarray*}
\|G_n(s,\delta)(A)x\| &\le& \delta  M(A)^{n+1}\sum_{k=0}^{n-1} \|A^k(A+\delta)^{n-1-k}x\|  \\
&\le& \delta M(A)^{n+1}\sum_{k=0}^{n-1}\sum_{m=0}^{n-1-k}  \begin{pmatrix} {n-1-k} \\m \end{pmatrix} \|A^{m+k}x\| \\
&\le& \delta M(A)^{n+1}\|x\|_{\dom(A^{n-1})}\sum_{k=0}^{n-1}\sum_{m=0}^{n-1-k} \begin{pmatrix} n-1-k\\m \end{pmatrix} \\
&\le& C_0 \delta \|x\|_{\dom(A^{n-1})},
\end{eqnarray*}
with $C_0=2^n M(A)^{n+1}$ and $\|x\|_{\dom(A^{n-1})} = \max_{r=0,1,\dots,n-1} \|A^rx\|$.  From this, (\ref{EA10}) and (\ref{EA00}) we obtain the estimate
\begin{eqnarray*} 
\lefteqn{\|f_H(A+\delta)x-f_H(A)x\|} \\
&\le&
 C_0\delta\|x\|_{\dom(A^{n-1})} \int_0^1 |\mu|(ds)
+C\|x\|\int_{1+}^\infty \frac{\delta\,|\mu|(ds)}{(1+\delta s)s^{n-1}},
\end{eqnarray*}
and (\ref{conver}) follows.
\end{proof}

Now assume that $A$ is injective and sectorial.  There is a third way to define $f(A)$ for $f\in \tilde{\mathcal{T}}_n$ via the holomorphic functional calculus.  For any $\omega\in (0,\pi)$ we have
\begin{equation}
|f(z)|\le  C_\omega \max(1,|z|^n), \quad z\in S_\omega.
\label{maxmin}
\end{equation}
Let $\tau(z) = z(1+z)^{-2}$, so $\tau(A) = A(1+A)^{-2}$ which is injective and $\dom \left(\tau^m(A)^{-1}\right)=\dom (A^m)\cap \ran(A^m)$ for any $m\in\N$ \cite[Proposition 9.4]{Weis}.  Then $\tau^{n+1}$ is a regulariser for $f$ in the holomorphic functional calculus, and so we can define $f_{hol}(A)$ as
\begin{equation}
f_{hol}(A)=[\tau(A)]^{-(n+1)}(\tau^{n+1} f)(A),
\label{fA}
\end{equation}
with
\[
\dom(f_{hol}(A))= \left\{x\in X:(\tau^{n+1} f)(A)x\in \dom(A^{n+1}) \cap \ran(A^{n+1}) \right\}.
\]

\begin{theorem}\label{ConHPH}
Let  $f\in \tilde{\mathcal{T}}_n$  and let  $A$
be an injective, sectorial  operator on  $X$. Let
$f_S(A)$ be defined by the extended Stieltjes calculus as in \eqref{fAesc}, $f_{H}(A)$ be defined by  \eqref{HAaa}, and  $f_{hol}(A)$ be defined by the extended holomorphic calculus as in \eqref{fA}.  Then
\begin{equation}\label{HolST}
f_S(A)=f_{hol}(A)=f_H(A).
\end{equation}
\end{theorem}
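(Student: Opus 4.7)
My plan is to establish the two equalities $f_S(A) = f_{hol}(A)$ and $f_S(A) = f_H(A)$ separately, using Lemma \ref{FcHc} as a bridge which makes the Stieltjes and holomorphic calculi coincide on $\tilde{\mathcal{S}}_b$, together with the core assertion of Lemma \ref{core}.

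For $f_S(A) = f_{hol}(A)$, I would argue as follows. By Proposition \ref{CSt} and Lemma \ref{Aregul0}, $\psi_n \in \tilde{\mathcal{S}}_{n,b}$ and $f\psi_n \in \tilde{\mathcal{S}}_{2n,b}$; hence Lemma \ref{FcHc} gives that $\psi_n$ and $f\psi_n$ produce the same bounded operators $(1+A)^{-n}$ and $(f\psi_n)(A)$ in the Stieltjes and holomorphic calculi. The estimate \eqref{maxmin} shows $f \in \mathcal{B}[S_\omega]$, while $\psi_n \in H(A)$ with $\psi_n(A)=(1+A)^{-n}$ bounded and injective, and $f\psi_n \in H(A)$. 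Applying the last assertion of Proposition \ref{properF} (together with part \ref{grrr}) inside the extended holomorphic calculus, $\psi_n$ is an admissible regulariser for $f$, and
\[
f_{hol}(A) = \psi_n(A)^{-1}(f\psi_n)_{hol}(A) = (1+A)^n (f\psi_n)(A) = f_S(A).
\]

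For $f_S(A) = f_H(A)$, Lemma \ref{core} says $\dom(A^n)$ is a core for $f_S(A)$, while by construction $f_H(A)$ is the closure of $\hat f(A)$ on $\dom(A^n)$, so it suffices to show $\hat f(A) x = f_S(A) x$ for $x \in \dom(A^n)$. For such $x$, using the commutation $(1+A)^{-n} A^n x = (A(1+A)^{-1})^n x$ on $\dom(A^n)$, one computes directly from \eqref{stilclos0}
\[
(1+A)^{-n}\hat f(A) x = a\,\psi_n(A) x + \int_0^\infty h_s(A) x \, d\mu(s),
\]
where $h_s(z):= z^n(1+z)^{-n}(1+sz)^{-n}$ and $h_s(A) = [(A(1+A)^{-1})(1+sA)^{-1}]^n$. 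The integral is absolutely convergent in $X$: indeed the partial-fraction identity $(A(1+A)^{-1})(1+sA)^{-1} = (s-1)^{-1}[(1+A)^{-1} - (1+sA)^{-1}]$ yields $\|h_s(A)\| \leq (2M(A)/|s-1|)^n$ for $s$ away from $1$, matching the moment condition on $\mu$, while $\|h_s(A)\|$ stays bounded for $s$ near $0$. The pointwise identity
\[
(f\psi_n)(z) = a\,\psi_n(z) + \int_0^\infty h_s(z)\,d\mu(s), \qquad z>0,
\]
is immediate from the representation \eqref{sfun0n} of $f$. By Fubini's theorem (applied after pairing with dual elements of $X^*$) and the uniqueness of the Stieltjes representation in $\tilde{\mathcal{S}}_{2n,b}$, this lifts to the operator identity $(f\psi_n)(A) x = a\,\psi_n(A) x + \int_0^\infty h_s(A) x \, d\mu(s)$. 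Comparing the two displays gives $(f\psi_n)(A) x = (1+A)^{-n}\hat f(A) x \in \dom(A^n)$, so $x \in \dom(f_S(A))$ and $f_S(A) x = (1+A)^n(f\psi_n)(A)x = \hat f(A) x$.

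The main obstacle is the Fubini-plus-uniqueness step in the second part, which promotes a pointwise identity among generalised Stieltjes functions into an operator identity; the delicacy is that each $h_s$ vanishes at both $0$ and $\infty$, so its representing measure in $\tilde{\mathcal{S}}_{2n,b}$ is a signed combination arising from the binomial expansion of $z^n = ((1+z)-1)^n$ rather than a positive one, and one must control its total variation uniformly enough in $s$ to justify the interchange of integrals.
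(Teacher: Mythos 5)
Your first equality $f_S(A)=f_{hol}(A)$ is correct and is essentially the paper's argument with a different bridging function: you use $\psi_n\in H(A)$ together with Proposition \ref{properF}(g) and Lemma \ref{FcHc}, whereas the paper observes that $\tau^{n+1}\in\tilde{\mathcal{S}}_{2(n+1),b}$ is a common regulariser in both calculi and invokes Lemma \ref{FcHc} for $f\tau^{n+1}$. Both are fine. For the second equality your overall strategy (agree with $\hat f(A)$ on $\dom(A^n)$, then use that $\dom(A^n)$ is a common core via Lemma \ref{core} and the definition \eqref{HAaa}) is exactly the paper's, but the computation is routed differently: the paper evaluates $[f\tau^{n+1}]_{hol}(A)x$ by the Cauchy integral \eqref{Cauchy}, interchanging $\int_\Gamma$ with $\int\mu(ds)$; this is deliberately arranged so that every integrand $z^n\tau^{n+1}(z)(1+zs)^{-n}$ lies in $H_0^\infty$, making the Fubini step routine. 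You instead stay inside the Stieltjes calculus and must identify $(f\psi_n)_S(A)$ (defined through the representing measure of $f\psi_n$ in $\tilde{\mathcal{S}}_{2n,b}$) with $a\psi_n(A)+\int_0^\infty h_s(A)\,\mu(ds)$.

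That identification is the one genuine gap: you name the difficulty (controlling the total variation of the representing measure $\nu_s$ of $h_s$ uniformly in $s$) but do not resolve it, and the operator-norm bound $\|h_s(A)\|\le (2M(A)/|s-1|)^n$ does not by itself control $|\nu_s|([0,\infty))$. The step can be closed: writing $h_s=(z/(1+z))^n(1+sz)^{-n}$ and using multiplicativity of total variation under the Widder product gives $|\nu_s|([0,\infty))\le 2^n$ for all $s$, while the partial-fraction expansion gives $|\nu_s|([0,\infty))\le 2^n|s-1|^{-n}$; combining these, $\int_0^\infty|\nu_s|([0,\infty))\,|\mu|(ds)\lesssim\int_0^\infty(1+s)^{-n}|\mu|(ds)<\infty$, which justifies the Fubini and, with the uniqueness of the $\tilde{\mathcal{S}}_{2n,b}$ representation, the operator identity. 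Alternatively, and more in the spirit of what is already proved, you can avoid any new Fubini by reusing the explicit decomposition $f=a+z^nf_0+f_1$ from Lemma \ref{Aregul0} together with Propositions \ref{ProPP} and \ref{homAlg}: each piece of $\psi_n f$ is then a product of elements of $\tilde{\mathcal{S}}_b$ whose $S$-images multiply, and pulling the bounded operators $(A(1+A)^{-1})^n$ and $(1+A)^{-n}$ through the relevant Bochner integrals yields exactly $a\psi_n(A)x+\int_0^\infty h_s(A)x\,\mu(ds)$. Either repair makes your argument complete.
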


\begin{proof}
First note that  
\[
\frac {z}{1+z} \in \tilde{\mathcal{S}}_{1,b}, \qquad  \frac {1}{1+z} \in \tilde{\mathcal{S}}_{1,b}.
\]
Hence
\[
\tau^{n+1}(z)= \left(\frac{z}{1+z}\right) ^{n+1} \left(\frac{1}{1+z}\right)^{n+1} 
\in \tilde{\mathcal{S}}_{2(n+1),b}.
\]
Moreover, 
\[
f(z)\tau^{n+1}(z) = f(z) \psi_n(z) \left(\frac{z}{1+z}\right) ^{n+1} \left(\frac{1}{1+z}\right) \in \tilde{\mathcal{S}}_{b}
\]
by Lemma \ref{Aregul0}.  Thus $\tau^{n+1}$ is a regulariser for $f\in \tilde{\mathcal{T}}_n$ in the Stieltjes calculus as well as the holomorphic calculus.
By  Lemma \ref{FcHc},
\[
[f \tau^{n+1}]_{hol}(A)=[f \tau^{n+1}]_S(A).
\]
So,
\begin{eqnarray} \label{HolSTo}
f_{hol}(A) &=& (\tau^{n+1}(A))^{-1}[f \tau^{n+1}]_{hol}(A), \\
&=& (\tau^{n+1}(A))^{-1}[f \tau^{n+1}]_S(A), \nonumber \\
&=& f_S(A). \nonumber
\end{eqnarray}

Next, let $f$ have the representation \eqref{sfun0n}.  By  (\ref{Cauchy}), we have for any $x \in X$,
\begin{eqnarray*}
[f \tau^{n+1}]_{hol}(A)x &=& \frac{1}{2\pi i}\int_\Gamma f(z)\tau^{n+1} (z) (z-A)^{-1}x\,dz \\
&=&\frac{1}{2\pi i}\int_\Gamma \left\{a+\int_0^\infty \frac{z^n\,\mu(ds)}{(1+zs)^n}\right\}\,\tau^{n+1}(z) (z-A)^{-1}x\,dz \\
&=& \frac{a}{2\pi i}\int_\Gamma \tau^{n+1}(z) (z-A)^{-1}x\,dz \\
&& \phantom{XX}
+\int_0^\infty \left\{\frac{1}{2\pi i}\int_\Gamma  \frac{z^n\tau^{n+1} (z)}{(1+zs)^n}\, (z-A)^{-1}x\,dz\right\}\,\mu(ds)  \\
&=&
a \tau^{n+1}(A)x+\int_0^\infty  A^n(1+sA)^{-n}\tau^{n+1} (A)x\,\mu(ds).
\end{eqnarray*}
From this, the boundedness of the operator $A(1+A)^{-2}$ and
the estimate (\ref{ineq}), we have
\begin{eqnarray*}
[f \tau^{n+1}]_{hol}(A)x &=& \tau^{n+1}(A)\left\{ax+\int_0^\infty  A^n(1+sA)^{-n} x\,\mu(ds)\right\}  \\
&=& \tau^{n+1}(A) f_H(A),\quad x\in \dom(A^n),
\end{eqnarray*}
and then
\begin{equation}\label{coreB}
f_{hol}(A)x=(\tau(A))^{-(n+1)}[f \tau^{n+1}]_{hol}(A)x= f_H(A)x,\quad x\in \dom(A^n).
\end{equation}
By the definition of $f_H(A)$,  $\dom(A^n)$ is a core for $f_H(A)$.  By \eqref{HolSTo} and Lemma \ref{core}, $\dom(A^n)$ is a core for $f_{hol}(A)=f_S(A)$.
Then we obtain the second equality in (\ref{HolST})   from (\ref{coreB}).
\end{proof}

\begin{corollary}\label{coins}
Let  $f\in \tilde{\mathcal{T}}_n$ and let $A$ be a sectorial  operator on  $X$. Then $f_S(A)$ as defined in the extended
Stieltjes calculus coincides with  $f_H(A)$ defined by \eqref{HAaa}.
\end{corollary}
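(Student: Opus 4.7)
The plan is to reduce to the injective case already treated in Theorem \ref{ConHPH} via a resolvent-type perturbation. For any $\delta>0$, property (b) of the preliminaries gives that $A+\delta$ is sectorial and invertible, hence injective, so Theorem \ref{ConHPH} applied to $A+\delta$ yields
\[
f_S(A+\delta)\,=\,f_H(A+\delta).
\]
I would then let $\delta\to0+$ and verify that both sides converge, on the common subspace $\dom(A^n)=\dom((A+\delta)^n)$, to $f_S(A)$ and $f_H(A)$ respectively.

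The convergence $f_H(A+\delta)x\to f_H(A)x$ for $x\in\dom(A^n)$ is exactly Proposition \ref{ttn}. For $f_S$, I would set $g:=f\psi_n\in\tilde{\mathcal{S}}_{2n,b}$ so that by \eqref{fAesc}
\[
f_S(A+\delta)x \,=\, (1+A+\delta)^n g(A+\delta)x.
\]
Applying Proposition \ref{properF}(\ref{grrr}) inside the extended Stieltjes calculus for $A+\delta$ (the bounded element $g$ commuting with the polynomial $(1+\iota)^n$) lets me rewrite this, for $x\in\dom(A^n)$, as $g(A+\delta)(1+A+\delta)^n x$. Proposition \ref{ttn01} now gives $\|g(A+\delta)-g(A)\|\to0$, while a binomial expansion yields $(1+A+\delta)^n x\to(1+A)^n x$ for $x\in\dom(A^n)$. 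Combining,
\[
f_S(A+\delta)x \,\longrightarrow\, g(A)(1+A)^n x \,=\, (1+A)^n g(A)x \,=\, f_S(A)x,
\]
where the penultimate equality is again Proposition \ref{properF}(\ref{grrr}), and the last uses Lemma \ref{core} (which ensures $\dom(A^n)\subset\dom(f_S(A))$).

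Passing to the limit in the injective-case identity then gives $f_S(A)x=f_H(A)x$ for every $x\in\dom(A^n)$. To conclude, I would invoke Lemma \ref{core}, which identifies $\dom(A^n)$ as a core for $f_S(A)$, together with the definition \eqref{HAaa} of $f_H(A)$ as the closure of $\hat{f}(A)$ defined on $\dom(A^n)$: the two closed operators agree on a common core and therefore coincide. The main subtle point is the commutation step, namely the legitimacy of interchanging the bounded operator $g(A+\delta)$ with $(1+A+\delta)^n$ on $\dom(A^n)$; everything else amounts to invoking the already established approximation results in operator norm and strongly.
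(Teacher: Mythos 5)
Your proposal is correct and follows essentially the same route as the paper: apply Theorem \ref{ConHPH} to the injective operator $A+\delta$, use Propositions \ref{ttn} and \ref{ttn01} to pass to the limit $\delta\to0+$ on $\dom(A^n)$ (the paper likewise writes $f_S(A+\delta)x=[f\psi_n]_S(A+\delta)(1+\delta+A)^nx$, which is exactly your commutation step), and conclude via the common core $\dom(A^n)$.
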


\begin{proof}  Take $x \in \dom(A^n)$, and let $\psi_n(z) = (1+z)^{-n}$.  By Theorem \ref{ConHPH}, for any $\delta>0$,
\begin{equation}\label{InjNo}
f_H(A+\delta)x = f_S(A+\delta)x = [f\psi_n]_S(A+\delta)(1+\delta+A)^{n}x. 
\end{equation}
By Proposition \ref{ttn},
\begin{equation*} 
\lim_{\delta\to0}\,f_{H}(A+\delta)x=f_H(A)x.
\end{equation*}
By Proposition \ref{ttn01},
\[
\lim_{\delta\to 0+}\,\|[f \psi_n]_S(A+\delta)-[f \psi_n]_S(A)\|=0.
\]
Since
\[
\lim_{\delta\to 0+}\,(1+\delta+A)^nx=(1+A)^nx,
\]
on passing to the limit as $\delta\to 0+$ in (\ref{InjNo}) we obtain
\[
f_H(A)x =[f \psi_n]_S(A)(1+A)^n x
=f_S(A)(1+A)^{-n}(1+A)^n x=f_S(A)x.
\]
As in the proof of Theorem \ref{ConHPH},  $\dom(A^n)$ is a core for both $f_S(A)$ and $f_H(A)$, and the claim follows.
\end{proof}

In the light of Theorem \ref{ConHPH} and Corollary \ref{coins}, we may in future write $f(A)$ instead of $f_S(A)$, $f_{hol}(A)$ or $f_H(A)$ for appropriate functions $f$.  When $\alpha>0$ and $f(z) = z^\alpha$, the coincidence of these definitions is the well known fact that classical definitions of the fractional powers $A^\alpha$ agree (see \cite[Proposition 3.1.12]{Ha06}, \cite[Section5.2]{Mart1}).

\subsection{Product formula in the extended Stieltjes calculus}
In this subsection, $A$ will be a sectorial operator
on a Banach space $X$.  For $f\in \tilde{\mathcal{T}}$, $f(A)$ will denote the operator defined by the extended Stieltjes calculus, or alternatively by \eqref{HAaa} (see Corollary \ref{coins}).

If $f \in \tilde{\mathcal{T}}_n$, then $e_k(z) := (1+k^{-1}z)^{-n}$ is a regulariser for $f$.  Moreover $\lim_{k\to\infty} (1+ k^{-1}A)^{-n} = 1$ in the strong operator topology (see \eqref{sot}).  It follows from Proposition \ref{GenerC}(b) that if $f,g\in \tilde{\mathcal{T}}$, then
\[
\overline{f(A)g(A)}=[f g](A).
\]
By (\ref{RKrit}), the product formula (\ref{main}) holds if
\begin{equation*}
\dom([f g](A))\subset \dom (g(A)).
\end{equation*}
We shall prove in Theorem \ref{cor3} that the product formula holds in many cases.  As in Proposition \ref{homAlg} and Corollary \ref{coins} the proof will be by reduction to the case when $A$ is injective, but using a quotient construction.  Passing to $A + \delta$ does not seem helpful in this context.

\begin{lemma}\label{quotient}
Let $A$ be a sectorial operator on $X$, and $f\in \tilde{\mathcal{T}}$.  Consider the quotient space
\[
X_0:=X/\ker{A}
\]
with the canonical quotient map $u: X\mapsto X_0$, and the quotient
operators $A_0$ and  $f_0(A)$ on $X_0$ given by
\begin{eqnarray}
\dom(A_0) &=& u(\dom(A)), \nonumber \\
\label{qsect} A_0 u(x)&=&u(Ax), \qquad x\in \dom(A),  \\
\dom(f_0(A)) &=& u(\dom(f(A))), \nonumber \\
\label{kernelQ}
f_0(A)u(x) &=& u(f(A)x), \qquad x\in \dom(f(A)).
\end{eqnarray}
Then $A_0$ is a sectorial operator on $X_0$ and
\begin{equation}\label{f0}
f(A_0)=f_0(A).
\end{equation}
\end{lemma}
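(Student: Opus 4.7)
The plan is to recognize \eqref{f0} as a direct instance of Proposition \ref{quotfc}, applied with $\mathcal{E}=\tilde{\mathcal{S}}_b$, $\mathcal{F}=\mathcal{O}(S_\pi)$, the Stieltjes calculus $\Phi$ for $A$ on $X$, and a companion Stieltjes calculus $\Phi_0$ for $A_0$ on $X_0$. The groundwork required is (a) that $A_0$ is sectorial on $X_0$, and (b) that the two calculi intertwine through $u$ on all of $\tilde{\mathcal{S}}_b$. For (a), I would first note that $\ker A$ is closed since $A$ is closed, so $X_0$ is a Banach space and $A_0$ is well defined on $\dom(A_0)=u(\dom A)$ because $A$ vanishes on $\ker A$. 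For $s>0$, every $x\in\ker A$ is fixed by $(1+sA)^{-1}$, so this resolvent restricts to the identity on $\ker A$ and descends to a bounded operator $R_s$ on $X_0$ satisfying $R_s u = u\,(1+sA)^{-1}$ and $\|R_s\|\le \|(1+sA)^{-1}\|$. A direct check on vectors $y=u(x)$ with $x\in\dom A$ shows $R_s=(1+sA_0)^{-1}$; in particular $A_0$ is closed and sectorial with $M(A_0)\le M(A)$.

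For (b), iterating the identity $R_s u=u(1+sA)^{-1}$ yields $(1+sA_0)^{-n}u = u(1+sA)^{-n}$ for all $n\in\N$. Given $g\in\tilde{\mathcal{S}}_{n,b}$ with representing measure $\nu$, pushing the continuous surjection $u$ inside the operator-norm convergent Bochner integral in \eqref{Asfa} (which is legitimate since $|\nu|$ is finite and $u$ is continuous) gives
\[
u\,g(A)x=\int_0^\infty u\,(1+sA)^{-n}x\,\nu(ds)=\int_0^\infty (1+sA_0)^{-n}u(x)\,\nu(ds)=g(A_0)u(x),
\]
so $\Phi_0(g)u=u\Phi(g)$ for every $g\in\tilde{\mathcal{S}}_b$.

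Finally, for $f\in\tilde{\mathcal{T}}_n$, Lemma \ref{Aregul0} supplies $\psi_n(z)=(1+z)^{-n}$ as a regulariser for $f$ in $\Phi$. Since $\Phi(\psi_n)=(1+A)^{-n}$ acts as the identity on $\ker A=\ker u$, it certainly maps $\ker u$ onto itself, so every hypothesis of Proposition \ref{quotfc} is in place. That proposition then yields $\Phi_0(f)u = u\Phi(f)$ with $\dom(\Phi_0(f))=\{u(x):x\in\dom\Phi(f)\}$, which by the defining relations \eqref{qsect} and \eqref{kernelQ} of $A_0$ and $f_0(A)$ is precisely \eqref{f0}. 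The main obstacle will be the verification in (a): confirming that $A_0$ is sectorial and that its resolvent coincides with the descent $R_s$ of $(1+sA)^{-1}$. Once that is done, (b) is a routine continuous pushforward under a Bochner integral, and the rest is a direct appeal to Proposition \ref{quotfc}.
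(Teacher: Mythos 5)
Your proposal is correct and follows essentially the same route as the paper: descend $(1+sA)^{-1}$ to $X_0$ to see that $A_0$ is sectorial with $M(A_0)\le M(A)$, verify the intertwining $g(A_0)u=ug(A)$ on $\tilde{\mathcal{S}}_b$ via the Bochner integral \eqref{Asfa}, and then invoke Proposition \ref{quotfc} with the regulariser $\psi_n$, whose image $(1+A)^{-n}$ is the identity on $\ker A=\ker u$. The only detail you leave implicit is the well-definedness of $f_0(A)$ in \eqref{kernelQ} (the paper notes $f(A)$ maps $\ker A$ into itself), but this also follows from the intertwining relation $\Phi_0(f)u=u\Phi(f)$ that Proposition \ref{quotfc} delivers.
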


\begin{proof}  Note first that $A_0$ is well-defined with dense domain.  It follows from the definitions of $f(A)$ that $f(A)x\in \ker(A)$ for all  $x\in \ker(A)\subset \dom(A)$, so the quotient operator $f_0(A)$ is correctly defined as a linear operator. Moreover, for $s>0$ the operator $(1+sA)^{-1}$ induces a bounded operator $R_s$ on $X_0$, and it is easy to see that $R_s = (1+sA_0)^{-1}$.  Hence 
\[
\|(1+sA_0)^{-1}\| \le \|(1+sA)^{-1}\| \le M(A).
\]
Thus $A_0$ is sectorial.

Since $(1+sA_0)^{-1}u = u (1+sA)^{-1}$,
it is easy to see that $g(A_0)u = ug(A)$ for all $g \in \mathcal{S}_b$.  So we can apply Proposition \ref{quotfc} with $\mathcal{E} = \mathcal{S}_b$ and  $\Phi$ and $\Phi_0$ the natural functional calculi associated with $A$ and $A_0$ as in \eqref{Asfa}, and $e(z) = (1+z)^{-n}$ (see Lemma \ref{Aregul0}), noting that $e(A) = (1+A)^{-n}$ is the identity map on $\ker u = \ker A$.   This establishes (\ref{f0}). 
\end{proof}

\begin{remark} \label{closedness}
It follows from Lemma \ref{quotient} that the operator $f_0(A)$ is closed.  This can be seen directly, and (\ref{f0}) follows as a corollary  because $f(A_0)$ and $f_0(A)$ are both closed operators and they agree on a common core $\dom(A_0^n) = u(\dom(A^n))$ (see (\ref{HAaa}) and Corollary \ref{coins}).
\end{remark}

We consider the following condition on $f \in \tilde{\mathcal{T}}$:
\begin{equation}\label{tilde0}
\text{$f$ has no zeros in $\C \setminus (-\infty,0]$ and $\tilde{f}(z) := 1/f(1/z)\in \tilde{\mathcal{T}}$.}
\end{equation}
It suffices that $f \in \tilde{\mathcal{T}}$, $f$ has no zeros in $(0,\infty)$ and $\tilde{f}$ (considered as a function on $(0,\infty)$) belongs to $\tilde{\mathcal{T}}$, because $f$ and $1/f$ then have holomorphic extensions to $\C \setminus (-\infty,0]$ and they are mutually reciprocal.  Note that any non-zero function in $\mathcal{CBF}$ satisfies \eqref{tilde0} \cite[p.66]{SSV}.  If $\re\alpha>0$ and $f(z)=z^\alpha$, then $\tilde{f} = f$ and \eqref{tilde0} holds.  For other examples where $f \in \tilde{\mathcal{T}}_1$ and $f$ has no zeros in $(0,\infty)$, see \cite[p.310]{Mart}.

The following result generalises Theorem \ref{prodM} (the case when $f$, $g$ and $f g$ all belong to $\tilde{\mathcal{T}}_1$) and hence the earlier result of Hirsch (when $f$, $g$ and $f g$ belong to $\mathcal{CBF}$).

\begin{theorem}\label{cor3} 
Let $f$, $g\in \tilde{\mathcal{T}}$ and let $A$ be a sectorial
operator. Assume that $f$ satisfies \eqref{tilde0}.  
Then the product formula \eqref{main} holds.
\end{theorem}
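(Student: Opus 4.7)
The plan is to apply Theorem \ref{MjA} in the extended Stieltjes calculus, after first reducing to the case where $A$ is injective via the quotient construction of Lemma \ref{quotient}.

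For the reduction, I would check that $\ker A \subset \dom(g(A))$ for every $g \in \tilde{\mathcal{T}}_m$: if $x \in \ker A$ then $A^m(1+sA)^{-m}x = A^m x = 0$ for every $s>0$, so the Hirsch operator \eqref{stilclos0} satisfies $\hat g(A)x = ax$, and then \eqref{HAaa} gives $x \in \dom(g(A))$. Granting the theorem in the injective case applied to the quotient $A_0$ on $X_0 = X/\ker A$, one has $\dom((fg)(A_0)) \subset \dom(g(A_0))$. For any $x \in \dom((fg)(A))$, Lemma \ref{quotient} yields $u(x) \in \dom((fg)(A_0)) \subset \dom(g(A_0)) = u(\dom(g(A)))$, so $u(x) = u(y)$ for some $y \in \dom(g(A))$, and $x - y \in \ker A \subset \dom(g(A))$ gives $x \in \dom(g(A))$. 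Remark \ref{RemPr} then delivers the product formula for $A$.

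Now assume $A$ is injective. Let $n,m,N \in \N$ be such that $f \in \tilde{\mathcal{T}}_n$, $g \in \tilde{\mathcal{T}}_m$, $\tilde f \in \tilde{\mathcal{T}}_N$, and set
\[
e_1(z) = (1+z)^{-n}, \qquad e_2(z) = (1+z)^{-m}, \qquad e_3(z) = \left(\frac{z}{1+z}\right)^N.
\]
All three lie in $\tilde{\mathcal{S}}_b$; by injectivity of $A$, each $\Phi(e_j)$ is injective and $e_j$ itself regularises $1/e_j$, so the invertibility hypotheses of Theorem \ref{MjA} hold. Lemma \ref{Aregul0} gives $e_1 f, e_2 g \in \tilde{\mathcal{S}}_b$ directly.

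The crux is verifying $e_3 f^{-1} \in \tilde{\mathcal{S}}_b$ and the range condition \eqref{nnA}. Using the $\tilde{\mathcal{T}}_N$ representation of $\tilde f$ together with $1/f(z) = \tilde f(1/z)$ yields $1/f(z) = a_0 + \int_0^\infty \mu_0(ds)/(z+s)^N$; a further change of variable $s \mapsto 1/s$ recasts $z^N/f$ as an element of $\tilde{\mathcal{T}}_N$, so Lemma \ref{Aregul0} gives $e_3 f^{-1} = \psi_N \cdot (z^N/f) \in \tilde{\mathcal{S}}_{2N,b}$. For \eqref{nnA} I would take $p(z) = (1-z)^{n+m}$, which has $p(0)=1$. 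The factorisation
\[
1 - \left(\frac{z}{1+z}\right)^N \;=\; \frac{1}{1+z}\cdot \frac{(1+z)^N - z^N}{(1+z)^{N-1}}
\]
exhibits $\Phi(1-e_3) = (1+A)^{-1} q(A)$ with $q(z) = ((1+z)^N - z^N)/(1+z)^{N-1} \in \tilde{\mathcal{S}}_b$ bounded, so $\Phi(p(e_3)) = (1+A)^{-(n+m)} q(A)^{n+m}$ has range contained in $\ran((1+A)^{-(n+m)}) = \ran(\Phi(e_1 e_2))$. Theorem \ref{MjA} then yields $f(A)g(A) = (fg)(A)$, completing the proof. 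The principal obstacle is the double change of variable that realises $z^N/f$ as a function in $\tilde{\mathcal{T}}_N$, and this is precisely where the hypothesis \eqref{tilde0} is used.
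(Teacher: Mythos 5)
Your proof is correct and takes essentially the same route as the paper: the same quotient reduction to the injective case via Lemma \ref{quotient} (using $\ker A\subset\dom(g(A))$), followed by the same choices $e_1=\psi_n$, $e_2=\psi_m$, $e_3=(z/(1+z))^N$ and $p(z)=(1-z)^{n+m}$ fed into the abstract product theorem (the paper invokes the variant Corollary \ref{MjA00}, which asks only for injectivity of $\Phi(e_j)$, rather than Theorem \ref{MjA} itself). The only difference is that you spell out details the paper leaves implicit, namely the change of variables realising $z^N/f$ in $\tilde{\mathcal{T}}_N$ and the verification that $\ker A\subset\dom(g(A))$.
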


\begin{proof}
Let  $f\in \tilde{\mathcal{T}_k}$ and $g\in \tilde{\mathcal{T}}_m$ for some
$k,m\in \N$, so  that  $\psi_k$ is a regulariser for $f$
and $\psi_m$ is a regulariser for $g$ in the Stieltjes calculus, where
\[
\psi_k(z)=\frac{1}{(1+z)^k}\in \tilde{\mathcal{S}}_{k,b}.
\]
By  (\ref{tilde0}), we have that $1/f \in \tilde{\mathcal{S}}_r$ for some $r\in \N$. 
Let
\[
e_3(z)=\frac{z^r}{(1+z)^r}\in \tilde{\mathcal{S}}_{r,b}.
\]
Then $e_3/f \in \tilde{\mathcal{S}}_{2r,b}$,  by Lemma \ref{Aregul0}.

Assume temporarily that $A$ is injective.  Then $e_3$
is a regulariser for $1/f$ in the Stieltjes calculus.  As in the proof of Corollary \ref{cor1}, (\ref{nnA}) holds for $p(z)=(1-z)^{k+m}$, and  by  Corollary \ref{MjA00} we conclude that the product formula (\ref{main}) holds.

Now consider the case when $A$ is not injective.  Let
$X_0:=X/\ker{A}$ with the canonical quotient map $u: X\mapsto X_0$
and the quotient operators $A_0$, $f_0(A)$ on $X_0$ as in
Lemma \ref{quotient}. Similarly, define also the quotient
operators
\begin{eqnarray*}
g_0(A) u(x)&:=&u(g(A))x) \quad \mbox{for}\quad x\in \dom(g(A)),  \\
{}[f g]_0(A)x &:=& u([f g](A)x)\quad \mbox{for}\quad x\in \dom([f g](A)).
\end{eqnarray*}
We can also define the operators $f(A_0)$, $g(A_0)$ and $[f
g](A_0)$ on $X_0$. By  Lemma \ref{quotient},
\begin{equation}\label{eqquot}
f(A_0)=f_0(A),\quad
g(A_0)=g_0(A),\quad [f g](A_0)=[f g]_0(A).
\end{equation}
Applying the previous case to the injective sectorial operator $A_0$, we obtain that
\[
f(A_0)g(A_0)=[f g](A_0),
\]
and then
\begin{equation}\label{q0}
\dom([f g](A_0))\subset \dom (g(A_0)).
\end{equation}
From (\ref{eqquot})
and (\ref{q0})  we have that
\begin{multline*}
u(\dom ([f g](A)))=\dom([f g]_0(A))
=\dom([f g](A_0)) \\
\subset \dom(g(A_0))=\dom (g_0(A))=u(\dom (g(A)).
\end{multline*}
Since $\ker A \subset \dom(g(A))$, it follows that
\[
\dom([f g](A))\subset \dom (g(A)),
\]
so the product formula (\ref{main}) holds.
\end{proof}

\begin{corollary}\label{ProdHH}
Let $A$ be a sectorial operator.  Let $g \in \tilde{\mathcal{T}}$, and for  $j=1,2,\dots,n$, let $f_j\in \tilde{\mathcal{T}}$ satisfy \eqref{tilde0}. Then the product formula
\[
f_1(A)f_2(A)\cdots f_n(A)g(A)=[f_1\cdot f_2\cdots f_n\cdot g](A)
\]
holds.
\end{corollary}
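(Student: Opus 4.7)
The plan is to proceed by induction on $n$, essentially reducing the general $n$-fold product to repeated applications of Theorem \ref{cor3}. Two features make this reduction work: first, Theorem \ref{cor3} imposes the condition \eqref{tilde0} only on the \emph{first} (left-most) factor, while the second (right-most) factor is merely required to lie in $\tilde{\mathcal{T}}$; and second, $\tilde{\mathcal{T}}$ is an algebra (Proposition \ref{CSt}), so tail products of the $f_j$'s with $g$ automatically remain in $\tilde{\mathcal{T}}$.

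Concretely, I would set
\[
h_k := f_k\, f_{k+1} \cdots f_n\, g \;\in\; \tilde{\mathcal{T}}, \qquad k = 1, 2, \dots, n,
\]
with the convention $h_{n+1} = g$, and prove by descending induction on $k$ (from $k=n$ down to $k=1$) that
\[
f_k(A)\, f_{k+1}(A)\cdots f_n(A)\, g(A) = h_k(A),
\]
where the left-hand side denotes the iterated operator composition with its natural nested domain as fixed in the Preliminaries. The base case $k=n$ is precisely Theorem \ref{cor3} applied to $f_n$ (which satisfies \eqref{tilde0}) and $g \in \tilde{\mathcal{T}}$, yielding $f_n(A)g(A) = (f_n g)(A) = h_n(A)$. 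For the inductive step, assume $f_k(A)\cdots f_n(A)g(A) = h_k(A)$ as an equality of (closed) operators. Composing both sides on the left by $f_{k-1}(A)$, which by our conventions means taking pairs $(x, f_{k-1}(A)y)$ with $y$ in the range of the operator and in $\dom(f_{k-1}(A))$, preserves the equality because the two operators have identical graphs; thus
\[
f_{k-1}(A)\,f_k(A)\cdots f_n(A)\,g(A) = f_{k-1}(A)\, h_k(A).
\]
A second application of Theorem \ref{cor3}, this time to $f_{k-1}$ (satisfying \eqref{tilde0}) and $h_k \in \tilde{\mathcal{T}}$, identifies the right-hand side with $(f_{k-1} h_k)(A) = h_{k-1}(A)$, completing the induction. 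The desired formula is the case $k=1$.

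I do not anticipate any real obstacle: the only point worth a brief sanity check is the elementary bookkeeping step that if two operators $T$ and $T'$ coincide as operators, then $f_{k-1}(A)T = f_{k-1}(A)T'$ as operators too, which is immediate from the definition of the domain of an operator product $\dom(ST) = \{x \in \dom(T) : Tx \in \dom(S)\}$ recalled in the Preliminaries. All the analytic content is already packaged in Theorem \ref{cor3}; the corollary amounts to iterating it cleanly.
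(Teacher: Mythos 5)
Your proof is correct and is exactly the argument the paper intends: the corollary is stated without proof immediately after Theorem \ref{cor3}, to be obtained by iterating that theorem, using that $\tilde{\mathcal{T}}$ is an algebra (Proposition \ref{CSt}) so each tail product $f_k\cdots f_n g$ stays in $\tilde{\mathcal{T}}$, and that \eqref{tilde0} is only required of the left-hand factor. The descending induction and the bookkeeping about operator composition are handled properly.
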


The next result was originally due to Hirsch.

\begin{theorem}\label{cor1H} \cite[Corollary 1]{HirFA}.
If  $f,g, f g\in \mathcal{CBF}$, then
\[
f(A)+[f g](A)=
[f(1+g)](A).
\]
\end{theorem}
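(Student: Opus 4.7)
The plan is to derive the identity from the factorisation $f(1+g) = f + fg$ by applying Theorem \ref{cor3} twice, once to the pair $(f,g)$ and once to the pair $(f,1+g)$, and then to use that $(1+g(A))^{-1}$ is bounded in order to identify the domain of $[f(1+g)](A)$ with $\dom(f(A))\cap\dom([fg](A))$.

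First I would assemble the structural facts. Since $\mathcal{CBF}$ is a convex cone containing the positive constants, $1+g\in\mathcal{CBF}$; and every non-zero element of $\mathcal{CBF}$ satisfies \eqref{tilde0}. Hence Theorem \ref{cor3} applied to the pairs $(f,g)$ and $(f,1+g)$ gives
\[
f(A)g(A)=[fg](A), \qquad f(A)(1+g)(A)=[f(1+g)](A),
\]
while Proposition \ref{properF}(b) yields $(1+g)(A)=1+g(A)$. Together with Proposition \ref{properF}(a) applied to the functions $f$ and $fg$ in $\tilde{\mathcal{T}}$, this immediately supplies the inclusion
\[
f(A)+[fg](A)\subset [f+fg](A) = [f(1+g)](A).
\]

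The crucial step is the reverse inclusion of domains. For $g\in\mathcal{CBF}$ one has $g(0+)<\infty$ and $(1+g)(z)\ge 1$ on $(0,\infty)$, so $h:=1/(1+g)$ is a bounded Stieltjes function, i.e.\ $h\in\mathcal{S}_{1,b}\subset\tilde{\mathcal{S}}_b$. Consequently $h(A)$ is a bounded operator, and Proposition \ref{properF}(\ref{inverrr}) applied to the identity $h\cdot(1+g)=1$ in $\tilde{\mathcal{T}}$ shows that $1+g(A)$ is injective with $(1+g(A))^{-1}=h(A)$. Now fix $x\in\dom([f(1+g)](A))=\dom(f(A)(1+g(A)))$ and set $y:=(1+g(A))x\in\dom(f(A))$; then $x=h(A)y$. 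Since $h\in\mathcal{F}_b$, Proposition \ref{properF}(\ref{grrr}) gives $h(A)f(A)\subset f(A)h(A)$, so in particular $h(A)\dom(f(A))\subset\dom(f(A))$. Thus $x\in\dom(f(A))$, whence $g(A)x=y-x\in\dom(f(A))$, i.e.\ $x\in\dom(f(A)g(A))=\dom([fg](A))$. Matching values on the common domain, $[f(1+g)](A)x=f(A)(x+g(A)x)=f(A)x+[fg](A)x$, which completes the argument.

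The main obstacle is precisely this reverse domain inclusion $\dom([f(1+g)](A))\subset\dom(f(A))$: it rests on the fact that $1/(1+g)$ is a bounded Stieltjes function, so that $(1+g(A))^{-1}$ lies in the bounded functional calculus and can be pushed inside $\dom(f(A))$ via Proposition \ref{properF}(\ref{grrr}). Without the complete Bernstein structure for $g$, the reciprocal $1/(1+g)$ need not be representable in $\tilde{\mathcal{S}}_b$ and the domain chase breaks down.
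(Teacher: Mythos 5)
Your argument is correct, and it reaches the reverse domain inclusion by a genuinely different route than the paper's. The paper does not prove Theorem \ref{cor1H} directly; it deduces it from the more general Corollary \ref{cor4} (for $f,g\in\tilde{\mathcal{T}}$ with $f$ and $1+g$ satisfying \eqref{tilde0}), and the proof there applies Theorem \ref{cor3} \emph{twice with the roles reversed}: once with $f$ in the invertible slot to get $[f(1+g)](A)=f(A)(1+g(A))$, and once with $1+g$ in the invertible slot to get $[f(1+g)](A)=(1+g(A))f(A)$. Equating the domains of these two compositions yields $\dom([f+fg](A))\subset\dom(f(A))\cap\dom(f(A)g(A))$, after which the paper finishes with essentially the same algebra you use at the end. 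You dispense with the second application of Theorem \ref{cor3}: instead you observe that $h:=1/(1+g)$ lies in the elementary algebra $\tilde{\mathcal{S}}_b$, so that $(1+g(A))^{-1}=h(A)$ is a bounded operator which leaves $\dom(f(A))$ invariant by the elementary product rule (Proposition \ref{properF}(\ref{grrr})); this gives $\dom([f(1+g)](A))\subset\dom(f(A))$ directly. That is a more elementary mechanism and is fully valid in the $\mathcal{CBF}$ setting, though one small point of hygiene: your stated justification (``$g(0+)<\infty$ and $1+g\ge 1$'') only yields boundedness of $h$; that $h$ is a Stieltjes function at all is the reciprocal correspondence between non-zero complete Bernstein functions and non-zero Stieltjes functions (\cite[p.66]{SSV}), which you should invoke explicitly. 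The trade-off is generality: in Corollary \ref{cor4} the hypothesis on $g$ is only that $1+g$ satisfies \eqref{tilde0}, in which case $1/(1+g)$ need not belong to $\tilde{\mathcal{S}}_b$ and your bounded-inverse domain chase would break down, whereas the paper's two-sided product formula still applies.
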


The following generalisation of Theorem \ref{cor1H} is a corollary of Theorem \ref{cor3}.

\begin{corollary}\label{cor4}
Let  $f,g\in \tilde{\mathcal{T}}$ be such that $f$ and $1+g$ both satisfy \eqref{tilde0}.
Then
\[
f(A)+[f g](A)=[f+f g](A).
\]
In particular, $f(A) + [f g](A)$ is closed.
\end{corollary}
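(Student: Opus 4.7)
The plan is to apply Theorem \ref{cor3} twice, exploiting both hypotheses on $f$ and $1+g$, and then to compare the resulting representations of $[f+fg](A)$. Since the constant function $1$ lies in $\tilde{\mathcal{S}}_b \subset \tilde{\mathcal{T}}$ with $\Phi(1)=1 \in \mathcal{L}(X)$, Proposition \ref{properF}(b) applied to the Stieltjes calculus yields $(1+g)(A) = 1 + g(A)$ as operators with common domain $\dom(g(A))$.

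First, invoking Theorem \ref{cor3} on the pair $(f, 1+g)$ (using that $f$ satisfies \eqref{tilde0}) gives
\[
[f+fg](A) = [f(1+g)](A) = f(A)(1+g)(A).
\]
Applying the theorem a second time to the pair $(1+g, f)$ (using that $1+g$ satisfies \eqref{tilde0}) yields
\[
[f+fg](A) = [(1+g)f](A) = (1+g)(A)f(A).
\]
A third application, to the pair $(f,g)$, gives $[fg](A) = f(A)g(A)$, which identifies $\dom([fg](A))$ with $\{x\in\dom(g(A)): g(A)x\in\dom(f(A))\}$.

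The inclusion $f(A) + [fg](A) \subset [f+fg](A)$ is the easy direction: if $x \in \dom(f(A)) \cap \dom([fg](A))$, then both $x$ and $g(A)x$ lie in $\dom(f(A))$, so $x + g(A)x = (1+g)(A)x \in \dom(f(A))$, placing $x$ in $\dom(f(A)(1+g)(A)) = \dom([f+fg](A))$, and on this domain the two operators visibly coincide. For the reverse inclusion, which I expect to be the main obstacle, let $x \in \dom([f+fg](A))$. The first decomposition gives $x \in \dom(g(A))$ with $(1+g)(A)x = x+g(A)x \in \dom(f(A))$, while the second decomposition gives $x \in \dom(f(A))$. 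Subtracting, $g(A)x = (x+g(A)x) - x \in \dom(f(A))$ since $\dom(f(A))$ is a subspace, so $x \in \dom(f(A)g(A)) = \dom([fg](A))$. Hence $x \in \dom(f(A)+[fg](A))$.

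This establishes the domain equality, and the values agree by direct computation, proving $f(A)+[fg](A) = [f+fg](A)$. The ``in particular'' assertion is then immediate, since $[f+fg](A)$ is closed as an operator defined by the extended Stieltjes calculus. The crucial role of the condition \eqref{tilde0} on both $f$ and $1+g$ is precisely to make both product decompositions of $[f+fg](A)$ available; with only one of them, one cannot recover the separate membership $x \in \dom(f(A))$ and $g(A)x \in \dom(f(A))$ from the weaker statement $x+g(A)x \in \dom(f(A))$.
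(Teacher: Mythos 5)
Your proof is correct and follows essentially the same route as the paper: both use Theorem \ref{cor3} (via the hypotheses on $f$ and $1+g$) to obtain the two factorisations $[f+fg](A)=f(A)(1+g(A))=(1+g(A))f(A)$ together with $[fg](A)=f(A)g(A)$, and then combine the resulting domain descriptions — your subtraction step $g(A)x=(x+g(A)x)-x\in\dom(f(A))$ is exactly the paper's domain-intersection computation. The only cosmetic difference is that you verify the easy inclusion by hand where the paper cites Proposition \ref{properF}(\ref{sum}).
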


\begin{proof}
Let $f\in \tilde{\mathcal{T}}_n$ and $g\in \tilde{\mathcal{T}}_m$ for some $n,m\in
\N$, so that $f g\in \mathcal{T}_{n+m}$. By
Proposition \ref{properF}(\ref{sum}), we have
\[
f(A)+[f g](A) \subset [f+f g](A),
\]
so it suffices to prove that
\[
\dom([f+f g](A))\subset \dom (f(A)+[f g](A)).
\]

By the assumptions on $f$ and $1+g$, and Theorem \ref{cor3},  we have that
\begin{equation}\label{faABCD}
f(A)g(A)=[f g](A),
\end{equation}
and
\begin{equation}\label{SumABC}
[f+f g](A)=[f \cdot (1+g)](A)=f(A)(1+g(A))=(1+g(A))f(A).
\end{equation}
From  (\ref{SumABC}) and (\ref{faABCD}) it follows that
\begin{align*}
\dom([f+f g](A))&=\{x\in \dom(g(A)): (1+g(A))x\in \dom(f(A))\}  \\
& \phantom{XX} \cap \{x\in \dom(f(A)): f(A)x\in \dom(g(A))\}  \\
&\subset\dom(f(A))\cap \dom(f(A)g(A))  \\
&=  \dom(f(A))\cap \dom([f g](A))  \\
&=\dom (f(A)+[f g](A)).
\qedhere
\end{align*}
\end{proof}

\section{Hille-Phillips calculus}

\subsection{Extended Hille-Phillips calculus} \label{SubEHF}
In this subsection we recall basic properties of the Hille-Phillips calculus (HP-calculus) for negative generators of bounded $C_0$-semgroups.  This calculus is for certain holomorphic functions on the open right half-plane $\C_+ := S_{\pi/2}$, so it is distinctive from the other calculi in this paper.  The holomorphic calculus of Section \ref{secthol} does provide  a calculus for functions on $\C_+$, but only for sectorial operators $A \in \Sect(\pi/2)$, i.e., for negative generators of bounded holomorphic semigroups.

A complex Radon measure $\mu$ on $[0,\infty)$ is called {\em Laplace-transformable} if
\[ \int_0^\infty e^{-st} \abs{\mu}(\ud{s}) <  \infty \quad
\text{for each $t > 0$}.
\]
The {\em Laplace transform} of a Laplace-transformable complex Radon
measure $\mu$ on $[0,\infty)$ is
\[ (\Lap\mu)(z) := \int_0^\infty e^{-sz} \, \mu(\ud{s}),
\qquad \re z > 0.
\]
This is a holomorphic function on $\C_+$.

If $\mu$ is a positive measure, then $\Lap\mu$ is a {\em completely monotone function} on $(0,\infty)$, i.e., $\Lap\mu$ is a $C^\infty$-function $f$ such that 
$f(z) \ge 0$ and $(-1)^n f^{(n)}(z) \ge 0$ for all $n\in\N, z>0$.  Conversely, any completely monotone function is of the form $\Lap\mu$ for a unique positive, Laplace-transformable, measure $\mu$.

Let $\eM([0,\infty))$ be the space of all complex Radon measures of finite total variation on $[0,\infty)$.  If $\mu \in \eM([0,\infty))$, then $\Lap\mu$ has an extension to a  continuous function on $\cls{\C}_+$. The space
\[ \Wip(\C_+) := \{ \Lap\mu : \mu \in \eM([0,\infty))\}
\]
is a Banach algebra with respect to pointwise multiplication and the
norm
\begin{equation*}
\norm{\Lap \mu}_{\Wip} := \norm{\mu}_{\eM} = \abs{\mu}([0,\infty)),
\end{equation*}
and the Laplace transform
\[ \Lap : \eM([0,\infty)) \pfeil \Wip(\C_+)
\]
is an isometric isomorphism.

Let $-A$ be the generator of a bounded $C_0$-semigroup
$(T(t))_{t\ge 0}$ on a Banach space $X$. Then the mapping
\[ \Phi: g = \Lap {\mu} 
\;\mapsto\; g(A) := \int_0^\infty T(t)\, \mu(d{t})
\]
(where the integral converges in the strong operator topology)
is a continuous algebra homomorphism of $\Wip(\C_+)$ into $\Lin(X)$
satisfying
\begin{equation*}
 \norm{g(A)} \le (\sup_{t\ge 0} \norm{T(t)} ) \, \norm{g}_{\Wip},
\qquad g\in \Wip(\C_+).
\end{equation*}
The homomorphism $\Phi$ is a proper functional calculus, called the {\em Hille-Phillips} (HP) functional calculus, for $A$.   For its basic properties one may consult \cite[Chapter XV]{HilPhi}.

The HP-calculus can be extended by the regularisation method described in Subsection \ref{ABFprel}, and we call this the {\em extended Hille--Phillips calculus} for $A$.  If $f: \C_+ \to \C$ is holomorphic and there exists
a function $e\in \Wip(\C_+)$ with $ef \in \Wip(\C_+)$ and the
operator $e(A)$ is injective, then
\begin{eqnarray*}
 f(A) &:=& e(A)^{-1} \, (ef)(A), \\
 \dom (f(A))&:=& \{x \in X : (ef)(A)x \in \ran(e(A)) \}.
\end{eqnarray*}

We will apply this regularisation approach to the study of
operator Bernstein functions.  A function $f \in C^\infty(0,\infty)$ is a {\em Bernstein function}, in short $f \in \mathcal{BF}$, if $f \ge 0$ and $f'$ is a completely monotone function.   We refer to \cite{SSV} for details about Bernstein functions, but we note that any complete Bernstein function is a Bernstein function.

The following facts were proved in \cite[Lemma 2.5]{GHT}.

\begin{lemma}\label{bf-fc} 
Every Bernstein function $f$ can be written in the form
\[
 f(z) = g_1(z) + z\, g_2(z), \qquad z >0,
\]
where $g_1, g_2 \in \Wip(\C_+)$.  Moreover, $(1+z)^{-1}$ is a regulariser for $f$ in the HP-calculus.
\end{lemma}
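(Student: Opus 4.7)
The plan is to unpack the Lévy–Khintchine representation of $f \in \mathcal{BF}$:
\[
f(z) = a + bz + \int_{0+}^\infty (1 - e^{-sz})\,\mu(ds), \qquad z > 0,
\]
with $a,b \ge 0$ and $\mu$ a positive Radon measure on $(0,\infty)$ satisfying $\int_{0+}^\infty \min(1,s)\,\mu(ds) < \infty$ (see \cite{SSV}). The idea is to split the integral at $s=1$: the tail $s > 1$ directly produces an element of $\Wip(\C_+)$, while the small-$s$ part carries a factor of $z$ after an application of Fubini.

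First I would handle the tail: since $\mu((1,\infty)) \le \int_{0+}^\infty \min(1,s)\,\mu(ds) < \infty$, we get
\[
\int_1^\infty (1-e^{-sz})\,\mu(ds) = \mu((1,\infty)) - \Lap\!\left(\mu|_{(1,\infty)}\right)(z),
\]
which lies in $\Wip(\C_+)$. For the small-$s$ part, I would write $1-e^{-sz} = z\int_0^s e^{-zt}\,dt$ and apply Fubini to obtain
\[
\int_{0+}^1 (1-e^{-sz})\,\mu(ds) = z\int_0^1 e^{-zt}\,\mu((t,1])\,dt = z\,\Lap(\nu)(z),
\]
where $\nu(dt) = \mu((t,1])\,\mathbf{1}_{[0,1]}(t)\,dt$. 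The measure $\nu$ has finite total variation
$\int_0^1 \mu((t,1])\,dt = \int_{0+}^1 s\,\mu(ds) < \infty$ (Fubini again). Defining
\[
g_1(z) := a + \mu((1,\infty)) - \Lap\!\left(\mu|_{(1,\infty)}\right)(z), \qquad
g_2(z) := b + \Lap(\nu)(z),
\]
gives the desired decomposition with $g_1, g_2 \in \Wip(\C_+)$ (the constants being $\Lap$-transforms of point masses at $0$).

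For the regulariser claim, note that $(1+z)^{-1} = \Lap(e^{-t}\,dt)(z) \in \Wip(\C_+)$. Using that $\Wip(\C_+)$ is an algebra containing the constant $1$, together with the identity $z/(1+z) = 1 - (1+z)^{-1}$, the decomposition above yields
\[
\frac{f(z)}{1+z} = \frac{g_1(z)}{1+z} + \left(1 - \frac{1}{1+z}\right) g_2(z) \in \Wip(\C_+).
\]
Finally, $(1+z)^{-1}(A) = (1+A)^{-1}$ is the resolvent at $-1$, hence injective, so $(1+z)^{-1}$ satisfies all requirements of a regulariser in the HP-calculus. The only mildly subtle point is ensuring the Fubini interchange is legitimate, which is guaranteed by the integrability $\int_{0+}^1 s\,\mu(ds) < \infty$ built into the Lévy–Khintchine data.
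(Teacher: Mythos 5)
Your proof is correct. The paper does not prove this lemma itself---it simply cites \cite[Lemma 2.5]{GHT}---and your argument (Lévy--Khintchine representation, splitting the Lévy measure at $s=1$, writing $1-e^{-sz}=z\int_0^s e^{-zt}\,dt$ and applying Fubini, justified by $\int_{0+}^1 s\,\mu(ds)<\infty$) is precisely the standard proof given in that reference; the regulariser claim then follows as you say from $f\psi_1 = g_1\psi_1 + (1-\psi_1)g_2 \in \Wip(\C_+)$ and the injectivity of $(1+A)^{-1}$.
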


If $f$ is a Bernstein function, then $f(A)$ as defined by the extended HP-calculus coincides with the definition of functional calculus given in \cite[Section 12.2]{SSV}, and in particular the extended HP-calculus agrees with the Hirsch calculus when $f \in \mathcal{CBF}$ (see \cite[Corollary 2.6, Remark 2.7]{GHT}).

A non-zero Bernstein function $f$ is said to be a {\em special Bernstein function}, written $f \in \mathcal{SBF}$, if $z/f(z)$ is a Bernstein function. 
Since $f \in \mathcal{CBF}$ if and only if $z/f(z) \in \mathcal{CBF}$, any complete Bernstein function is a special Bernstein function.  We refer to \cite[Chapter 10]{SSV} for more details about the class $\mathcal{SBF}$ and its relation to $\mathcal{BF}$ and $\mathcal{CBF}$.

\begin{proposition}\label{PrS}
Let $f$ be a non-zero special Bernstein function. Then
\begin{equation}
\frac{z}{(1+z)f(z)}\in \Wip(\C_+).
\label{wipp}
\end{equation}
\end{proposition}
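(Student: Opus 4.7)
The plan is to reduce everything to Lemma \ref{bf-fc} and exploit the fact that $\Wip(\C_+)$ is a Banach algebra. Since $f$ is a (non-zero) special Bernstein function, by definition the function $g(z) := z/f(z)$ is itself a Bernstein function on $(0,\infty)$, and hence extends to a holomorphic function on $\C_+$. The target quantity is simply
\[
\frac{z}{(1+z)f(z)} = \frac{g(z)}{1+z},
\]
so the whole task is to show that $g(z)/(1+z) \in \Wip(\C_+)$ whenever $g$ is a Bernstein function.

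To do this I would apply Lemma \ref{bf-fc} to write $g = g_1 + z\, g_2$ with $g_1, g_2 \in \Wip(\C_+)$. Then
\[
\frac{g(z)}{1+z} = \frac{g_1(z)}{1+z} + \frac{z}{1+z}\, g_2(z).
\]
Both $\frac{1}{1+z} = (\Lap e^{-t})(z)$ and $\frac{z}{1+z} = 1 - \frac{1}{1+z}$ lie in $\Wip(\C_+)$ (the constant $1$ is the Laplace transform of the unit mass at $0$). Since $\Wip(\C_+)$ is an algebra containing $g_1$ and $g_2$, each of the two summands belongs to $\Wip(\C_+)$, and the claim follows.

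There is essentially no obstacle here beyond verifying that Lemma \ref{bf-fc} is applicable to $g = z/f(z)$, which is immediate from the definition of $\mathcal{SBF}$. The only point worth noting explicitly is that, although $g$ a priori is only given as a function on $(0,\infty)$, its membership in $\mathcal{BF}$ supplies the required holomorphic extension to $\C_+$, so that Lemma \ref{bf-fc} indeed yields $g_1, g_2 \in \Wip(\C_+)$.
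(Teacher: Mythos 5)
Your proof is correct and follows exactly the paper's own argument: apply Lemma \ref{bf-fc} to the Bernstein function $z/f(z)$ and use that $(1+z)^{-1}$ and $z(1+z)^{-1}$ lie in the Banach algebra $\Wip(\C_+)$. No differences worth noting.
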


\begin{proof}
Applying Lemma \ref{bf-fc} to the Bernstein function $z/f(z)$, we have
\[
\frac{z}{(1+z)f(z)}=\frac{1}{1+z} g_1(z) + \frac{z}{1+z}g_2(z),
\]
where $g_1(z)$, $g_2(z)$, $(1+z)^{-1}$, $z(1+z)^{-1}$ all belong to the algebra $\Wip(\C_+)$.
\end{proof}

\subsection{Product formula in the extended HP-calculus}

Proposition \ref{properF}(\ref{grrr}) provides the following product rule for the extended HP-calculus:
if $f$ is regularisable and $g\in \Wip(\C_+)$,
then
\begin{equation}\label{hpfc.e.prod}
 g(A) f(A) \subseteq f(A) g(A) = (fg)(A).
\end{equation}
We shall now extend this product rule.

The following statement is a version of Lemma \ref{quotient} for semigroup generators.

\begin{lemma}\label{QSem}
Let $-A$ be the generator of a bounded $C_0$-semigroup
$(T(t))_{t\ge 0}$ on $X$. Let $X_0:=X/\ker{A}$ with the canonical quotient map $u: X\mapsto X_0$ and let $A_0$ be the quotient operator defined by \eqref{qsect}. Then $-A_0$ is the generator of a bounded $C_0$-semigroup
$(T_0(t))_{t\ge 0}$ on $X_0$ given by
\begin{equation}\label{TTQ}
T_0(t)u(x)=u(T(t)x),\quad x\in X.
\end{equation}
Moreover, if $f$ is regularisable in the HP-calculus with
regulariser 
\[
e(z)=\psi_k(z)=\frac{1}{(1+z)^k}
\]
for some $k\in \N$,  and $f_0(A)$ is the quotient operator on $X_0$
defined by \eqref{kernelQ}, then
\begin{equation}\label{ZZ}
f(A_0)=f_0(A).
\end{equation}
\end{lemma}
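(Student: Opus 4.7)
My strategy is to handle the semigroup generation statement from first principles and then to deduce \eqref{ZZ} by invoking Proposition \ref{quotfc}.

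First I would verify that $\ker A$ is pointwise fixed by the semigroup: for $x \in \ker A \subset \dom(A)$ the identity $\frac{d}{dt}T(t)x = -T(t)Ax = 0$ forces $T(t)x = x$ for every $t \ge 0$. This makes the definition \eqref{TTQ} consistent on cosets, so $T_0(t)$ is a well-defined bounded operator on $X_0$ with $\|T_0(t)\| \le \|T(t)\|$. The semigroup law is immediate from \eqref{TTQ}, and strong continuity on $X_0$ follows from $T_0(t)u(x) = u(T(t)x) \to u(x)$ as $t\to 0+$ using the continuity of $u$ and the density of $u(X) = X_0$.

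Next I would identify the negative generator of $(T_0(t))_{t \ge 0}$ with $A_0$. Since $-A$ generates a bounded $C_0$-semigroup, $A$ is sectorial in the sense of this paper, so Lemma \ref{quotient} applies at the level of resolvents and gives $(1+sA_0)^{-1}u = u(1+sA)^{-1}$ for all $s>0$. Denoting by $-B$ the generator of $(T_0(t))_{t\ge 0}$, the resolvent representation yields
\[
(1+sB)^{-1}u(x) = \int_0^\infty s^{-1} e^{-t/s}\, T_0(t) u(x)\,dt = u\!\left(\int_0^\infty s^{-1} e^{-t/s}\, T(t)x\,dt\right) = u(1+sA)^{-1}x,
\]
where commuting $u$ with the strong Bochner integral is legitimate by its boundedness and \eqref{TTQ}. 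Since $u$ is surjective, $(1+sA_0)^{-1} = (1+sB)^{-1}$ on $X_0$ for every $s>0$, and hence $A_0 = B$.

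For \eqref{ZZ} I would apply Proposition \ref{quotfc} with $\mathcal{E} = \Wip(\C_+)$, $\mathcal{F}$ the algebra of holomorphic functions on $\C_+$, and $\Phi$, $\Phi_0$ the HP-calculi of $A$, $A_0$. The intertwining relation $\Phi_0(g)u = u\Phi(g)$ for $g = \Lap\mu \in \Wip(\C_+)$ is obtained by pulling $u$ through the strong-operator integral $g(A) = \int_0^\infty T(t)\,\mu(dt)$ and applying \eqref{TTQ}. The remaining hypothesis that $\Phi(e) = (1+A)^{-k}$ maps $\ker u = \ker A$ onto itself is immediate from the first step: for $x \in \ker A$, $(1+A)^{-1}x = \int_0^\infty e^{-t}T(t)x\,dt = x$, so $(1+A)^{-k}$ restricts to the identity on $\ker A$. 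Proposition \ref{quotfc} then delivers that $e$ regularises $f$ also in the HP-calculus of $A_0$ and that $f(A_0)u = u f(A)$ with $\dom(f(A_0)) = u(\dom(f(A)))$, which is precisely \eqref{ZZ}.

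The principal technical point is the generator identification in the middle step; the resolvent approach through Lemma \ref{quotient} avoids the awkwardness of differentiating $T_0$ on an \emph{a priori} unknown domain and reduces everything to the already established sectorial-operator quotient machinery.
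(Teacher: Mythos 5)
Your proposal is correct and follows essentially the same route as the paper: the paper simply cites Engel--Nagel for the quotient semigroup statement \eqref{TTQ} (which you instead prove directly, including the observation that $T(t)$ fixes $\ker A$ pointwise), and then, exactly as you do, verifies the intertwining relation $g(A_0)u = u\,g(A)$ for $g\in\Wip(\C_+)$ and deduces \eqref{ZZ} from Proposition \ref{quotfc}.
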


\begin{proof}
The proof of  (\ref{TTQ}) can be found in \cite[p. 61]{Engel}.  It is easily verified that
$g(A_0)u = ug(A)$ for $g \in \Wip(\C_+)$, and then (\ref{ZZ}) follows from Proposition \ref{quotfc} applied to the HP-calculi for $A$ and $A_0$.
\end{proof}

The following statement is a variant of \cite[Theorem 12.22(v)]{SSV}.    In particular, it applies when $f \in \mathcal{SBF}$ and $g \in \mathcal{BF}$, without assuming that $fg \in \mathcal{BF}$.

\begin{theorem}\label{generator}
Let $-A$ be the generator of
a bounded $C_0$-semigroup on $X$.
Let $f\in \mathcal{SBF}$ and let  $g$ be regularisable in the HP-calculus with regulariser
\[
\psi_k(z)=\frac{1}{(1+z)^k},
\]
for some $k \in \N$.  Then the product formula \eqref{main} holds.
\end{theorem}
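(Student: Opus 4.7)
The plan is to apply Corollary \ref{MjA00} to the extended HP-calculus, with a reduction to the injective case in the spirit of the proof of Theorem \ref{cor3} and Lemma \ref{QSem}. The natural choice of data is
\[
e_1(z) = \frac{1}{1+z}, \qquad e_2(z) = \frac{1}{(1+z)^k}, \qquad e_3(z) = \frac{z}{1+z},
\]
all of which lie in $\Wip(\C_+)$. The three regulariser conditions are already available in the paper: $e_1 f \in \Wip(\C_+)$ by Lemma \ref{bf-fc} (since $\mathcal{SBF} \subset \mathcal{BF}$), $e_2 g \in \Wip(\C_+)$ is the hypothesis on $g$, and $e_3/f = z/((1+z)f(z)) \in \Wip(\C_+)$ is precisely the conclusion of Proposition \ref{PrS}, which is where the assumption $f \in \mathcal{SBF}$ enters. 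The range hypothesis \eqref{nnA} is satisfied with $p(z)=(1-z)^{k+1}$, since then $p(e_3(A)) = (1 - A(1+A)^{-1})^{k+1} = (1+A)^{-(k+1)} = e_1(A)e_2(A)$, so the required range inclusion is a tautology.

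The only catch is that $e_3(A) = A(1+A)^{-1}$ is injective if and only if $A$ is injective; the operators $e_1(A) = (1+A)^{-1}$ and $e_2(A) = (1+A)^{-k}$ are injective automatically. Assuming temporarily that $A$ is injective, Corollary \ref{MjA00} therefore delivers the product formula without further work. For general $A$ I would invoke the quotient construction: by Lemma \ref{QSem}, the operator $-A_0$ on $X_0 = X/\ker A$ generates a bounded $C_0$-semigroup, and by Proposition \ref{quotfc} with a suitable power of $(1+z)^{-1}$ as regulariser the extended HP-calculus for $A_0$ intertwines with the quotient map $u$ for each of $f$, $g$ and $fg$. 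The key point to check is that $A_0$ is injective, which rests on $\ker A = \ker A^2$ for sectorial $A$: if $A^2 x = 0$ and $y := Ax$, then $(1+sA)^{-1}y = y$ while $sy = x - (1+sA)^{-1}x$, so $\|sy\| \le (1+M(A))\|x\|$ uniformly in $s>0$, forcing $y=0$.

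With the identity $f(A_0)g(A_0) = (fg)(A_0)$ in hand, one lifts as in the proof of Theorem \ref{cor3}: by Remark \ref{RemPr} it suffices to show $\dom((fg)(A)) \subset \dom(g(A))$, and for $x \in \dom((fg)(A))$ the intertwining relations give $u(x) \in \dom((fg)(A_0)) \subset \dom(g(A_0)) = u(\dom(g(A)))$, so $x$ agrees with some element of $\dom(g(A))$ modulo $\ker A$. Since $(1+A)^{-k}$ restricts to the identity on $\ker A$, Proposition \ref{properF}(\ref{errr}) applied to the regulariser $(1+z)^{-k}$ for $g$ gives $\ker A \subset \ran((1+A)^{-k}) \subset \dom(g(A))$, closing the argument. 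The main obstacle is really the injectivity reduction — verifying that $A_0$ is injective and that the extended HP-calculus interacts correctly with the quotient — rather than the algebraic core supplied by Corollary \ref{MjA00}, which in this setting is essentially by inspection.
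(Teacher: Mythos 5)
Your proposal is correct and follows essentially the same route as the paper: the same choice of $e_1,e_2,e_3$ and $p(z)=(1-z)^{k+1}$ feeding into Theorem \ref{MjA} (or its variant, Corollary \ref{MjA00}) in the injective case, with Lemma \ref{bf-fc} and Proposition \ref{PrS} supplying the regulariser conditions, followed by the quotient reduction via Lemma \ref{QSem} exactly as in the proof of Theorem \ref{cor3}. Your explicit verification that $A_0$ is injective (via $\ker A=\ker A^2$) is a detail the paper leaves unstated, but otherwise the arguments coincide.
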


\begin{proof}
First we assume that $A$ is injective. Then we can apply
Theorem \ref{MjA} for the extended HP-calculus. In this
situation, by  Lemma \ref{bf-fc} and Proposition \ref{PrS}
we can choose the functions $e_1,e_2,e_3\in \Wip(\C_{+})$, as
\[
e_1(z)=\frac{1}{1+z},\quad e_2(z)=\frac{1}{(1+z)^k}, \quad e_3(z)=\frac{z}{1+z},
\]
and $p(z) = (1-z)^{k+1}$ so
\[
p(e_3(A))=(1+A)^{-(k+1)}=[e_1 e_2](A).
\]
So Theorem \ref{MjA} shows that the product formula (\ref{main}) holds when $A$ is injective.

When $A$ is not injective, we consider the quotient space
$X_0:=X/\ker{A}$ with the canonical quotient map $u: X\mapsto X_0$
and the quotient operators $A_0$ and $f_0(A)$ as in Lemma \ref{QSem}, so that
$-A_0$ is the generator of a bounded $C_0$-semigroup
$(T_0(t))_{t\ge 0}$ on $X_0$. Similarly, define  also the quotient
operators
\[
g_0(A) u(x):=u(g(A)x)\quad \mbox{for}\quad x\in \dom(g(A)),
\]
\[
[f g]_0(A)x:=u([f g](A)x)\quad \mbox{for}\quad x\in \dom([f g](A)).
\]
We can also define the operators $f(A_0)$, $g(A_0)$ and $[f
g](A_0)$ on $X_0$ by the extended HP-calculus. By  Lemma
\ref{QSem} we have
\begin{equation*}
f_0(A)=f(A_0),\;\;g_0(A)=g(A_0),\;\;[f g]_0(A)=[f g](A_0).
\end{equation*}
Since the operator $A_0$ is injective, the injective case gives the product formula
\[
f(A_0)g(A_0)=[f g](A_0),
\]
so that
\[
\dom([f g](A_0)) \subset \dom (g(A_0)).
\]
As in the proof of Theorem \ref{cor3}, this leads to 
\[
\dom([f g](A))\subset \dom (g(A)).
\]
Thus the product formula (\ref{main}) holds.
\end{proof}

\begin{corollary}\label{fprodg}
Let $f_1,f_2,\dots,f_n\in \mathcal{SBF}$  and  assume that  $g$
is regularisable in the HP-calculus with regulariser $\psi_k$.
 If  $-A$ is the generator of a bounded
$C_0$-semigroup on $X$, then the product formula
\[
f_1(A)f_2(A)\cdots f_n(A)g(A)=
[f_1\cdot f_2\cdot\cdots f_n\cdot g](A)
\]
 holds.
\end{corollary}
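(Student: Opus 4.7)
The plan is to prove the result by induction on $n$, with the base case $n=1$ being exactly Theorem \ref{generator}. The inductive step requires an auxiliary lemma showing that the intermediate products of the form $f_{j} \cdots f_n \cdot g$ remain regularisable by some $\psi_m$, so that Theorem \ref{generator} can be applied again at each stage.

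More precisely, set $h_{n+1} := g$ (regularisable by $\psi_k$ by hypothesis), and recursively define $h_j := f_j \cdot h_{j+1}$ for $j = n, n-1, \dots, 1$. I claim that each $h_j$ is regularisable in the HP-calculus by $\psi_{k + n - j + 1}$. The base case $j = n+1$ is the hypothesis. For the induction step, assume $\psi_{k_{j+1}} \cdot h_{j+1} \in \Wip(\C_+)$, where $k_{j+1} = k+n-j$. Since $f_j \in \mathcal{SBF} \subset \mathcal{BF}$, Lemma \ref{bf-fc} gives $\psi_1 \cdot f_j \in \Wip(\C_+)$. Then
\[
\psi_{k_{j+1}+1} \cdot h_j = (\psi_1 \cdot f_j) \cdot (\psi_{k_{j+1}} \cdot h_{j+1}) \in \Wip(\C_+),
\]
since $\Wip(\C_+)$ is a Banach algebra, and $\psi_{k_{j+1}+1}(A) = (1+A)^{-(k_{j+1}+1)}$ is injective. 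So $\psi_{k_{j+1}+1}$ regularises $h_j$, completing the induction.

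With this in hand I apply Theorem \ref{generator} iteratively. For each $j \in \{n, n-1, \dots, 1\}$, the function $f_j \in \mathcal{SBF}$ and the function $h_{j+1}$ is regularisable by $\psi_{k+n-j}$, so Theorem \ref{generator} yields
\[
f_j(A)\, h_{j+1}(A) = (f_j \cdot h_{j+1})(A) = h_j(A).
\]
Composing these equalities from $j=n$ down to $j=1$ gives
\[
f_1(A) f_2(A) \cdots f_n(A)\, g(A) = f_1(A) \cdots f_{n-1}(A)\, h_n(A) = \cdots = h_1(A) = [f_1 \cdot f_2 \cdots f_n \cdot g](A),
\]
which is the desired product formula. (Associativity of operator composition, together with equality of domains at each step rather than mere inclusion, is what allows the chain of identifications to close up cleanly.)

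The main obstacle is the auxiliary claim that intermediate products inherit regularisability by powers of $(1+z)^{-1}$. The critical input is Lemma \ref{bf-fc}, which provides the factorisation $\psi_1 \cdot f_j \in \Wip(\C_+)$ for Bernstein functions; without this, the algebra property of $\Wip(\C_+)$ alone would not be enough since $f_j$ itself is typically unbounded on $\C_+$. Everything else is a routine induction and bookkeeping of regularisers.
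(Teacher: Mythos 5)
Your proof is correct and follows the route the paper intends: the corollary is stated without proof as an iterated application of Theorem \ref{generator}, and the only point needing care is exactly the one you address, namely that each intermediate product $f_j\cdots f_n\cdot g$ is again regularisable by a power of $(1+z)^{-1}$, which you obtain from Lemma \ref{bf-fc} and the algebra property of $\Wip(\C_+)$. The index bookkeeping and the injectivity of $(1+A)^{-m}$ check out, so nothing is missing.
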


\begin{example}\label{ESpec}
The results of this section do not hold for general Bernstein functions $f$. Indeed, let
\begin{equation*}
f_1(z)=1-e^{-z} \in \mathcal{BF}, \qquad g(z) = \frac{z}{(1+z)f_1(z)}.
\end{equation*}
Suppose that (\ref{wipp}) is  true for $f=f_1 \in \mathcal{BF}$.  Then $g \in \Wip(\C_+)$, so it has a continuous extension to $\overline\C_+$ satisfying
\[
(1+z)f_1(z)g(z) = z, \qquad z \in \overline\C_+.
\]
But $f_1(2\pi i )=0$  and we have  a contradiction.

Now let $-A$ be the generator of a bounded $C_0$-semigroup $(T(t))_{t\ge 0}$ and let $f_2$ be the (complete) Bernstein function $f_2(z)=z$. Then
\[
f_1(A)f_2(A)=(1-T(1))A,\quad \dom(f_1(A)f_2(A))=\dom(A).
\]
Since $f_1(A)$ is a bounded operator,
\[
f_2(A)f_1(A)=[f_1 f_2](A)=A(1-T(1)),
\]
with
\[
\quad \dom([f_1 f_2](A))=\{x\in X:\,(1-T(1))x\in \dom(A)\}.
\]
So, in this case the product formula (\ref{main}) holds if and
only if
\begin{equation}\label{IfIf}
x-T(1)x\in \dom(A)\Rightarrow
x\in \dom(A).
\end{equation}
There exist  bounded $C_0$-semigroups
$(T(t))_{t\ge 0}$, such that (\ref{IfIf}) is not true. For
example, let  $ X:=L^2[0,1]$ and
\[
(Ay)(s)=y'(s),\quad s\in [0,1],
\]
with
\[
\dom(A)=\{y\in X:\; \text{$y$ absolutely continuous},\;
y'\in X,\;y(0)=y(1)\}.
\]
Then $A$ is an unbounded operator  and $-A$ generates the periodic
shift $C_0$-semigroup, and $T(1)=1$. So, in this case we
have
\[
f_1(A)f_2(A)x=0,\quad x\in \dom(f_1(A)f_2(A))=\dom(A).
\]
On the other hand,
\begin{multline*}
f_2(A)f_1(A)x=[f_1 f_2](A)x=0, \\
x\in \dom([f_1 f_2](A))=\dom(f_2(A)f_1(A))=X,
\end{multline*}
so
$\overline{f_1(A)f_2(A)}=[f_1 f_2](A),$ but $f_1(A)f_2(A)\not=[f_1 f_2](A)$.
\end{example}

Complete Bernstein functions satisfy the following property \cite[Proposition 7.10]{SSV}:
\begin{equation}  \label{gm}
f_1, f_2\in \mathcal{CBF}\,\Rightarrow\, \text{$f_1^{\alpha} f_2^{1-\alpha}\in \mathcal{CBF}$ for all $\alpha\in (0,1)$}.
\end{equation}
This leads to another possible definition 
 of $(f_1  f_2)(A)$ for $f_1,f_2\in \mathcal{CBF}$ (see \cite[Definition 12.26]{SSV}):
\begin{equation}\label{ShP}
(f_1 f_2)(A):=[(f_1^{1/2} f_2^{1/2})(A)]^2.
\end{equation}
It is proved  in \cite[Corollary
12.27]{SSV} that if $-A$ is the generator
of a uniformly bounded $C_0$-semigroup, and $f_1,f_2\in
\mathcal{CBF}$, then this definition satisfies
\[
(f_1 f_2)(A)=f_1(A)f_2(A)=f_2(A)f_1(A).
\]
It follows from Theorem \ref{generator} that the definition 
(\ref{ShP}) coincides with the definition of the 
operator $(f_1 f_2)(A)$  by the extended HP-calculus.

\begin{example}
No statement similar to \eqref{gm} is true for Bernstein functions.   
Indeed, let $f_1(z)=1-e^{-z}$  and  $f_2(z)=z$, as in Example \ref{ESpec}, so $f_1 \in \mathcal{BF}$ and $f_2 \in \mathcal{CBF}$.  Suppose (for a contradiction) that
\[
F(z):=f_1^\alpha(z) f_2^\beta(z)\in \mathcal{BF}
\]
for some $\alpha>0$, $\beta>0$.
Then the derivative $F'$ is a completely monotone function, so
\begin{equation}  \label{CompM}
F'(z)=z^\beta (1-e^{-z})^\alpha\left[\frac{\alpha}{e^z-1}+\frac{\beta}{z}\right]
=\int_0^\infty e^{-zt} \nu(dt),  \quad z \in \C_+,
\end{equation}
for some positive, Laplace-transformable, measure $\nu$ on $[0,\infty)$.
This gives
\begin{equation} \label{CompM2}
|F'(\tau+is)|\le F'(\tau),\quad \tau>0,\quad s\in\R.
\end{equation}
Putting $s=2\pi$ in \eqref{CompM2} and using \eqref{CompM}, we obtain
\[
|\tau+2\pi i |^\beta \left[\frac{\alpha}{e^{\tau}-1}-\frac{\beta}{|\tau+2\pi i|}\right]
\le \tau^\beta \left[\frac{\alpha}{e^{\tau}-1}+\frac{\beta}{\tau}\right],\quad \tau>0,
\]
and letting $\tau\to 0+$ we obtain a  contradiction.
\end{example}

\end{document}